\DeclareMathAlphabet{\mathsfbd}{T1}{\sfdefault}{\bfdefault}{\itdefault}
\SetMathAlphabet{\mathsfbd}{bold}{T1}{\sfdefault}{\bfdefault}{\itdefault}
\DeclareMathAlphabet{\mathsfbdit}{T1}{\sfdefault}{\bfdefault}{\itdefault}
\SetMathAlphabet{\mathsfbdit}{bold}{T1}{\sfdefault}{\bfdefault}{\itdefault}
\DeclareMathAlphabet{\mathsfit}{T1}{\sfdefault}{\mddefault}{\sldefault}
\SetMathAlphabet{\mathsfit}{bold}{T1}{\sfdefault}{\bfdefault}{\sldefault}
\definecolor{applegreen}{rgb}{0.55, 0.71, 0.0}
\definecolor{ao}{rgb}{0.0, 0.5, 0.0}
\newtheorem{theorem}{Theorem}[section]
\newtheorem{corollary}[theorem]{Corollary}
\newtheorem{lemma}[theorem]{Lemma}
\newtheorem{proposition}[theorem]{Proposition}
\newtheorem*{remark}{Remark}
\newcommand{\EMc}{\ensuremath{\Upupsilon}} 
\newcommand{\Betaf}{\mathsf{B}}
\newcommand{\N}{\ensuremath{\mathbb{N}}}  
\newcommand{\restr}[2]{{
  \left.\kern-\nulldelimiterspace 
  #1 
  \vphantom{\big|} 
  \right|_{#2} 
  }}
\renewcommand{\P}{\ensuremath{\mathbbm{P}}} 
\newcommand{\E}{\ensuremath{\mathbbm{E}}}  
\newcommand{\Var}[1]{\ensuremath{\mathbbm{Var}\(#1\)}} 
\newcommand{\stp}[1]{\(#1\)_{t\geq 0}} 
\newcommand{\given}[1]{\ensuremath{\vert #1}} 
\newcommand{\eps}{\epsilon}
\newcommand{\dif}[1]{\hspace{3pt}d#1}
\newcommand{\der}[2]{\frac{d}{d#2}#1} 
\newcommand{\bO}[1]{\mathcal{O}\(#1\)} 
\newcommand{\lo}[1]{o\(#1\)} 
\newcommand{\Ind}[1]{
\ensuremath{\mathbbm{1}_{#1}}
} 
\renewcommand{\S}[1]{ 
\ifx&#1&
\ensuremath{\mathcal{S}}
\else
\ensuremath{\mathcal{S}}\(#1\)
\fi
}
\newcommand{\Bo}[1]{  
\ifx&#1&
\ensuremath{\mathscr{B}}
\else
\ensuremath{\mathscr{B}}(#1)
\fi
}
\newcommand{\Lp}[1]{\mathsf{L}^{#1}} 
\DeclarePairedDelimiterX{\IP}[2]{\langle}{\rangle}{#1, #2} 
\newcommand{\norm}[1]{\ensuremath{\left\lVert #1 \right\rVert}} 
\newcommand{\stf}[1]{ 
\ifx&#1&
\ensuremath{S}
\else
\ensuremath{S\(#1\)}
\fi
} 
\newcommand{\Id}{\ensuremath{\bm{I}}} 
\newcommand{\Op}[1]{\ensuremath{\bm{#1}}} 
\newcommand{\mat}[1]{\Op{#1}}
\newcommand{\PS}[1]{\mathscr{P}_{#1}} 
\DeclareMathOperator{\coag}{Coag} 
\newcommand{\ze}{\mathbf{0}}
\renewcommand{\(}{\left(}
\renewcommand{\)}{\right)}
\renewcommand{\[}{\left[}
\renewcommand{\]}{\right]}
\newcommand{\lbr}{\left\{} 
\newcommand{\rbr}{\right\}} 
\newcommand{\abs}[1]{\left\lvert #1 \right\rvert}
\newcommand{\card}[1]{\##1}
\DeclarePairedDelimiter\floor{\lfloor}{\rfloor}
\newcommand{\dPi}{\tilde{\Pi}} 
\newcommand{\dPiN}{\dPi^{(N)}}
\newcommand{\dpi}{\tilde\pi}
\newcommand{\cp}{\varPi} 
\newcommand{\cpNn}{\cp^{(N,n)}}
\newcommand{\rpvN}{\eta^{(N)}}
\newcommand{\rpv}{\bm\eta^{(N)}}
\newcommand{\sbpN}{s^{(N)}} 
\newcommand{\osN}{\bm\xi^{(N)}} 
\newcommand{\osi}[1]{\xi_{#1}^{(N)}}
\newcommand{\PN}{\mathbf{P}} 
\newcommand{\perm}{\sigma} 
\renewcommand{\Var}[1]{\mathbbm{Var}\(#1\)}
\newcommand{\ppwr}{\gamma}
\renewcommand{\card}[1]{\abs{#1}}
\begin{document}
\title{Exchangeable coalescents beyond the Cannings class}
\author[1]{Arno Siri-J\'egousse}
\author[2]{Alejandro H. Wences}
\affil[1]{IIMAS, Universidad Nacional Aut\'onoma de M\'exico, México}
\affil[2]{LAAS-CNRS, Université de Toulouse, CNRS, Toulouse, France}


\maketitle
\begin{abstract}
We propose a general  framework for the study of the genealogy of neutral discrete-time populations. 
We remove the standard
assumption of exchangeability of offspring distributions {appearing in Cannings models}, and
replace it by a {less restrictive condition of} non-heritability of reproductive success. 
We provide a general criterion for the weak convergence of their genealogies 
to $\Xi$-coalescents, and apply it to a  simple parametrization of our scenario (which, under mild
conditions, we also prove
to essentially include the general case).
We provide examples for such populations, including models with highly-asymmetric offspring distributions
and populations undergoing random but recurrent bottlenecks. 
Finally we study the limit genealogy of
a new exponential model which, as previously shown for related models and 
in spite of its built-in (fitness) inheritance mechanism, 
can be brought into our setting.
\end{abstract}

\section{Introduction}

Coalescent processes appear as the natural limits that describe
the genealogy of evolving population models. In fact, much of the research 
on population models focuses on establishing this connection, often as weak limits under 
appropriate time scales and/or time changes. 
A primordial class of population models are Cannings models \cite{Cannings74,Cannings75}. These 
are able to accommodate most biological premises that fall under
the assumption of neutral evolution. The key assumption of these models is that the offspring distribution of 
the parents in any generation
should be {exchangeable (invariant under permutations)};
 in particular all the parents have the same reproductive success, or fitness, in distribution. 
Due to their wide applicability and manageability, Cannings models
are among the most studied population evolution models.
The genealogy of Cannings models was described in whole generality, 
and in terms of coalescent processes, in the foundational work of \cite{MohleSagitov2001}. 
After this, a wide literature was developed on the study of limit genealogies of evolution models.
Some are direct applications e.g. \cite{HuilletMohle2021}, or deal with a priori more complex models that can be reduced to Cannings models \cite{Schweinsberg2003, CortinesMallein2017}.
Others are modifications, such as models with varying population sizes  \cite{Mohle2002, KajKrone2003, Freund2020} or diploid reproduction \cite{MohleSagitov2003,BirknerLiuSturm2018}, using heuristics close to those of \cite{MohleSagitov2001}. 
Finally some works use different techniques such as duality, e.g. in the case of populations suffering recurrent bottlenecks \cite{CasanovaMiroJegousse2020} or overlapping generations (Casanova et al.  preprint arXiv:2210.05819).

Here we study a generalization of Cannings 
models, the so-called Asymmetric Cannings (AC) models, that, nonetheless, still fall under
the umbrella of neutral evolution. Our results provide weaker conditions for the main result in \cite{MohleSagitov2001}, expanding its applicability and providing a better insight into neutral evolution from a mathematical perspective. Having an accurate understanding of the neutral case is paramount for the theory and applications where the effect of natural selection is studied. One of our main contributions is the change in
focus in neutral models from the exchangeability (symmetry) condition on 
offspring distributions, to the weaker condition of non-heritability of reproductive success. 
It is in fact from the latter property, and not the former, 
from which neutrality stems; both, from biological 
(heritability is a necessary condition for natural selection to occur, whereas asymmetric reproductive success is not sufficient) and mathematical perspectives. 

Indeed, as argued in \cite{MohleSagitov2001}, the assumed exchangeability of the offspring distribution in Cannings models
ensures that the partition-valued ancestry process evolves, in distribution, 
through sequential ``putting balls into boxes'' steps.
In fact it is from this last property from which the Markovicity and exchangeability of the ancestry process stem. 
In this work we thus remove the exchangeability condition on the offspring distribution of Cannings models, 
and instead model neutral evolution by removing any form of inheritance, letting children choose parents
in an exchangeable way. In other words, we allow individuals 
in any given generation to have different offspring distributions,
but impose that these differences in reproductive success
stem from non-heritable traits. An example of this would be a population whose individuals randomly explore different 
ecological niches in every new generation, resulting in non-heritable asymmetries in reproductive success. Formally, AC models consist of a population with constant size $N$ evolving in discrete time.
Letting $\osN(t)\coloneqq\(\osi{1}(t),\cdots,\osi{N}(t)\)$ be the offspring sizes of each parent in generation 
$t\in\N$, the
 framework makes the following assumptions:
\begin{enumerate}
\item {\bf Static environment.} The sequence of vectors of offspring sizes $\stp{\osN(t)}$ is i.i.d..
\item {\bf Non-heritability.} Children choose parents in an exchangeable way; i.e. the random vector of indices indicating the parent of each child in the previous generation must be exchangeable (and the number of repetitions of each index should of course comply with $\osN$).
\end{enumerate}

As a special case, we focus on a parametrization of this scenario, the class of Asymmetric Wright-Fisher (AWF) models, in which
the offspring distribution is assumed to be multinomial. 
The key difference between multinomial and general offspring distributions (within our framework), 
is that in AWF models children choose parents
via a sampling with replacement, whereas in the general AC case the sampling is done without replacement. Taking this 
distinction into
account one can easily recast our results from AWF models to AC models. 
In the
AWF framework, children of each generation choose their parent according to independent copies of a random probability vector $\rpv=(\rpvN_1,\dots,\rpvN_N)$, and
the distribution 
 of the offspring number of individuals at a given generation is multinomial with parameters $(N,\rpv)$.

 {In this work we provide a general criterion for the weak convergence of 
discrete-time coalescent processes to $\Xi$-coalescents as $N\to\infty$; which in particular applies to the
genealogy of AC and AWF populations. 
This in fact follows by a careful study of the corresponding arguments
in \cite{MohleSagitov2001} for Cannings populations. 
In the case of AC models, one may argue that any vector of offspring sizes $(\osi{1},\dots,\osi{N})$ becomes exchangeable once we
 randomly permute its entries, thus effectively rendering any model with non-heritability, and in fact more general models, into a Cannings model. In other words, the distribution of the genealogy of the AC model is equal to that of 
the Cannings model with offspring sizes $(\osi{\sigma(1)},\dots,\osi{\sigma(N)})$ where $\sigma$ is a uniform permutation of $[N]$.
This loss of information on the asymmetric reproductive success of the population will, however, over-complicate the
study of its genealogy. We are able to 
avoid this unnecessary step and harness the asymmetry of the reproduction laws to more easily derive the limit genealogy of the population.

We also show that, in most cases, the difference between sampling with and without replacement becomes negligible, making 
the dynamics of AC and AWF models essentially equivalent. This is a general
result in the same spirit as \cite{HuilletMohle2021}, where the authors study the sampling with replacement version of the model in \cite{Schweinsberg2003}, see also \cite{GPS} for a multinomial version of Cannings models with seed bank effects.}

The new class of asymmetric models and the techniques developed in this paper have some straightforward applications to population models that do not satisfy Cannings assumption of exchangeability.
 A first example is given by the celebrated model developed in \cite{EldonWakeley2006} for populations with skewed offspring distributions where only one individual per generation reproduces. 
Another example of asymmetric reproduction is given by (a slight modification of) the model in \cite{CasanovaMiroJegousse2020} for populations suffering recurrent bottlenecks, for which a whole generation can be produced by a small amount of individuals from the previous one.
 
A more interesting consequence of our method is its applicability to a certain class of non-neutral models.
 Exponential models  \cite{BrunetDerrida97} provide a surprising example 
where mutation and selection are incorporated and, however, the built-in inheritance mechanism in fact disappears due to
mathematical transformations (mainly Poisson point processes identities). 
These models can therefore be studied under
 the AWF framework. The exponential model we are considering here is an extension that combines those of \cite{CortinesMallein2018,CortinesMallein2017}.
 As for most models incorporating strong selective pressure,
their limit genealogy is described by the coalescent first introduced by \cite{BolthausenSznitman98}. 
The Bolthausen-Sznitman coalescent 
 has been proposed as a null model for the genealogies of rapidly adapting populations  (see e.g. \cite{NeherHallatschek2013}). Many examples in the literature have established this link, such as those in \cite{NeherHallatschek2013, Schweinsberg2017-II, CortinesMallein2018, CortinesMallein2017, BerestyckiBerestyckiSchweinsberg2013, SchertzerWences2023},
to mention a few.
To the best of our knowledge, our model is the first example in which the convergence to the Bolthausen-Sznitman coalescent holds in a time scale other than logarithmic on the size $N$ of the population.

 This paper is structured as follows.
 In section  \ref{sec:multinomialOffsp}, we establish the main convergence results for the genealogies of both AC and AWF models, but also establish a correspondence so that more tractable results for AWF models can be used in the AC case.
In section \ref{seq:largeRepEvents}, we illustrate how we can exploit the asymmetry
when the limit genealogy is a $\Lambda$-coalescent, simplifying the proofs and generalizing similar results 
found in the literature. 
The same techniques can also be used for the general
$\Xi$-coalescent case; for simplicity and space constraints we do not include such development here.
Section \ref{neutralmodels} is devoted to the application of our main results to neutral asymmetric models of skewed offspring and bottlenecks. 
In section  \ref{sec:multPDcase} we focus on a particular example of an AWF model where the family frequencies are obtained from a Poisson-Dirichlet partition, and establish a relation with Beta-coalescents in this case. Thanks to this new result, we will be able to study a new class of {exponential models}.

\section{The genealogy of models with non-heritability} \label{sec:multinomialOffsp}
\subsection{Weak convergence of coalescent processes}
In this section we present our main weak convergence theorem for general discrete-time coalescent processes. It is in fact a re-framing of the main theorem in \cite{MohleSagitov2001} which, nonetheless, uncovers its generality and its possible application to population models
beyond the Cannings class.\par
Let us first introduce some notation and recall some terminology from \cite{Bertoin2006}. Let $\PS{n}$ (resp. $\PS{\infty}$) be the set of partitions of $[n]\coloneq \{1,\cdots,n\}$ (resp. $\N$)
endowed with their usual topology 
(Lemma 2.6 in \cite{Bertoin2006}),
and $\ze_n\in\PS{n}$ (resp. $\ze_\infty$) be the partition of singletons. 
For any partition $\pi$, denote its blocks (its composing elements), ordered by their least elements, by
 $\pi_1,\pi_2,\cdots$. More precisely, 
\begin{itemize}
\item $\pi_i\in\pi$ for all $i\in\N$
\item $\pi =\bigcup_{i=1}^{\infty} \pi_i$
\item $\min\{k: k\in \pi_j\} \geq \min\{k: k\in\pi_i\}$ for all $j\geq i $.
\end{itemize}
Denote the cardinality of any set $A$ by $\card{A}$. Write $\coag(\cdot,\cdot)$ for
the usual coagulation operator on $\PS{n}\times\PS{n}$ (Definition 4.2 in \cite{Bertoin2006}): 
if $\pi'\in\PS{m}$ and $\pi\in\PS{n}$ are such that
$\card{\pi} \leq m\leq \infty$ we say that the pair $(\pi,\pi')$ is admissible, and define the coagulation 
of $\pi$ by $\pi'$ as
\begin{equation*}
\coag(\pi,\pi')=(\hat{\pi}_1,\hat{\pi}_2,\cdots),
\end{equation*}
where 
\begin{equation*}
\hat{\pi}_k \coloneqq \bigcup_{j\in \pi'_k} \pi_j.
\end{equation*}
Of course $\pi_j$ is set to $\emptyset$ if $j>\abs{\pi}$. \par
Consider a discrete-time coalescent process $\left(\cpNn_t\right)_{t\in\N}$ 
constructed as
\begin{equation}\label{eq:multiGenealogyDef}
\cpNn_t\coloneqq \begin{cases}
\ze_n, & t=0,\\
\coag\(\cpNn_{t-1},\dPiN_t\), & t > 0,
\end{cases}
\end{equation}
where the family of coagulation increments $\(\dPiN_h\)_{h\in\N}$ is i.i.d. and each partition $\dPiN_h$ is exchangeable (i.e., its distribution is invariant under any random permutation of the elements in its blocks, see e.g. section 2.3.2 \cite{Bertoin2006}).
We are interested in the weak limit as $N\to\infty$ of $\stp{\cpNn_{\lfloor t/c_N\rfloor}}$. 
The correct time scale,
 as is for Cannings models, 
is given by the probability $c_N$ that two randomly chosen individuals have the same parent in the previous generation. Thanks to exchangeability, this probability can be written in terms of the indices $1$ and $2$  as
\begin{equation}\label{eq:generalcN}
c_N= \P\left(1 \text{ and }2\text{ belong to the same block of }\dPiN_h\right).
\end{equation}\par
We now introduce some additional notation. Let $\PS{[0,1]}$ be the space of mass partitions 
$$
\PS{[0,1]}\coloneqq \{(\rho_1,\rho_2,\cdots)\colon \rho_1\geq \rho_2\geq\cdots, \text{ and }\sum_{i=1}^\infty \rho_i \leq 1\}
$$
endowed with the $\Lp{\infty}$ topology,
and consider a $\Xi$-coagulation measure on $\PS{[0,1]}$ (see Section 4.2.4 in \cite{Bertoin2006}). Let
 $\mat Q^{(n)}_{\Xi}$ be the coagulation matrix of the 
$\Xi$-coalescent with values on $\PS{n}$; if $(\pi,\pi')$ is an admissible pair of partitions,  $\mat Q^{(n)}_{\Xi}(\pi,\coag(\pi,\pi'))$ gives the rate at which a $\Xi$-coalescent jumps from $ \pi$ to 
$\coag(\pi,\pi')$ . If $I\equiv I_{\pi'}$ is the set of indices $i$ such that $\card{\pi'_i}=1$, and for $\bm\rho\in\PS{[0,1]}$ we write $\rho_0=1-\sum_{i=1}^\infty  \rho_i$, then these rates are given by
\begin{equation}\label{eq:QratesFormula}
\mat Q^{(n)}_{\Xi}(\pi,\coag(\pi,\pi')) = \int_{\PS{[0,1]}}  \sum_{\substack{i_1,\cdots,i_{\card{\pi'}}> 0\\ \text{all distinct}}} \prod_{j\in I} (\rho_{i_j}+\rho_0)    \prod_{j\not\in I}\rho_{i_j}^{\card{\pi'_{j}}}  \frac{\Xi(d\bm \rho)}{\sum_{i=1}^\infty \rho_i^2}.
\end{equation}
(see also e.g.  Equation (11) in \cite{Schweinsberg2000}).
The above expression is a consequence of the well-known paint-box construction (see e.g. section 2.3.2 in \cite{Bertoin2006}) of random exchangeable partitions of $[N]$ from a mass partition $\bm \rho$: one partitions the unit interval into consecutive non-overlapping sub-intervals of sizes $\rho_1,\rho_2,\cdots,$ respectively and, letting $(U_i)_{1\leq i\leq N}$ be i.i.d. standard uniform random variables,
one constructs a partition of $[N]$ by putting together in the same block all those indices $i$ such that $U_i$ falls on the same interval of type $[\sum_{j=1}^k \rho_j, \sum_{j=1}^{k+1} \rho_j)$; whereas all those indices whose $U_i$ falls on the
interval $[1-\rho_0,1)$ become singleton blocks.  In equation \eqref{eq:QratesFormula},  it is the coagulation increment $\pi'$ that is paint-box constructed from a mass partition $\bm\rho$ that is chosen at rate $ \frac{\Xi(d\bm \rho)}{\sum_{i=1}^\infty \rho_i^2}$. Note that, the random
variables $U_i$ being independent, and conditional on the value of $\bm\rho$, the sum of products appearing in \eqref{eq:QratesFormula} 
simply computes the probability that the paint-box construction directed by $\bm\rho$ results in the partition $\pi'$.
Finally, for any coagulation matrix $\mat Q^{(n)}$ operating on $\PS{n}$, we use the abbreviation
$\mat Q^{(n)}(\dpi)\coloneqq\mat Q^{(n)}(\ze_n,\dpi)$. \par
The following result was previously stated and proved in \cite{MohleSagitov2001} 
for the genealogies of Cannings models. 
Here we  emphasize that the same statement, and in fact the same proofs, remain
valid (verbatim) in the context of the more general 
discrete-time coalescent processes defined as in \eqref{eq:multiGenealogyDef}. We include the corresponding argument for completion.
\begin{theorem}\label{th:multWeakConv}
Fix $n\in\N$. Let 
$$
\mat P^{(N,n)}({\pi, \coag(\pi,\tilde\pi)})\equiv \mat P^{(N,n)}(\tilde\pi)  = \P(\dPiN_h=\dpi)
$$ 
be the transition matrix of a discrete-time coalescent as defined in \eqref{eq:multiGenealogyDef}. 
Also, let $\mat Q^{(n)}_{\Xi}$ be a coagulation matrix associated to a measure $\Xi$ on $\PS{[0,1]}$.
 Assume that 
\begin{equation}\label{eq:TaylorMassPartition}
\mat P^{(N,n)}(\dpi) = \begin{cases}
c_N\mat Q^{(n)}_{\Xi}(\dpi) + \lo{c_N} &\text{ if }\dpi\in \PS{n}\setminus \{\ze_n\}\\
1-c_N\sum_{\pi\in \PS{n}\setminus\ze_n} \mat Q^{(n)}_{\Xi}(\pi) + \lo{c_N}& \text{ if }\dpi=\ze_n.
\end{cases} 
\end{equation}
{i)}  If $c_N\to c>0$ as $N\to\infty$, then $\left(\cpNn_{t}\right)_{t\in\N}$ converges weakly in the product topology for $(\PS{n})^\N$  
 to a
Markov chain with initial state $\ze_n$ and transition matrix $\Id + c\mat Q^{(n)}_{\Xi}$.
\\
{ii)} If $c_N\to 0$ as $N\to\infty$, then $\stp{\cpNn_{\lfloor t/c_N\rfloor}}$ converges weakly in the Skorohod space $D([0,\infty), \PS{n})$ to
the $\Xi$-coalescent with initial state $\ze_n$.\par
\end{theorem}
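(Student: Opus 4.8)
\emph{Plan.} Both parts are convergence statements for finite-state Markov chains, and I would prove them purely through their (rescaled) transition operators, exactly as in \cite{MohleSagitov2001}; the point is that that proof uses nothing about the increments $\dPiN_h$ beyond their being i.i.d.\ and exchangeable. First I would record that, $\PS{n}$ being finite and the $\dPiN_h$ i.i.d., $\(\cpNn_t\)_{t\in\N}$ is a time-homogeneous $\PS{n}$-valued Markov chain with $\cpNn_0=\ze_n$: from a state $\pi$ with $\card{\pi}=b$ blocks only the restriction $\dPiN_h|_{[b]}$ enters the coagulation, so the chain moves from $\pi$ to the coarsening $\coag(\pi,\tau)$ (for the unique $\tau\in\PS{b}$ realising it) with probability $\P\(\dPiN_h|_{[b]}=\tau\)$. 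Since the hypothesis \eqref{eq:TaylorMassPartition} only constrains the $\ze_n$-row, the first genuine step is to propagate it to all rows: I would sum \eqref{eq:TaylorMassPartition} over the finitely many partitions of $[n]$ restricting to a given $\tau\neq\ze_b$ on $[b]$ — all of which differ from $\ze_n$, so each contributes $c_N\mat Q^{(n)}_\Xi(\cdot)+\lo{c_N}$ — and then invoke the consistency of the $\Xi$-coalescent matrices $\(\mat Q^{(m)}_\Xi\)_m$ under restriction of $[n]$ to $[b]$ (Section~4.2 of \cite{Bertoin2006}) to recognise the sum as $\mat Q^{(n)}_\Xi\(\pi,\coag(\pi,\tau)\)$. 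Hence \eqref{eq:TaylorMassPartition} holds verbatim for the whole matrix $\mat P^{(N,n)}$, with $\mat Q^{(n)}_\Xi$ a genuine conservative rate matrix (nonnegative off-diagonal entries by \eqref{eq:QratesFormula}, rows summing to zero by the diagonal convention).

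\emph{Part (i).} With $c_N\to c>0$, the above gives $\mat P^{(N,n)}\to\Id+c\,\mat Q^{(n)}_\Xi$ entrywise, and this limit is again stochastic (nonnegative entries as limits of nonnegative entries, rows summing to $1$ since those of $\mat Q^{(n)}_\Xi$ sum to $0$). I would then simply note that entrywise convergence of transition matrices of finite-state Markov chains sharing the initial state $\ze_n$ is the same as convergence of all finite-dimensional marginals, which on the compact metrizable space $(\PS{n})^\N$ amounts to weak convergence; so $\(\cpNn_t\)_{t\in\N}$ converges weakly to the Markov chain started at $\ze_n$ with transition matrix $\Id+c\,\mat Q^{(n)}_\Xi$.

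\emph{Part (ii).} With $c_N\to0$, I would work with the rescaled generators $A_N\coloneqq c_N^{-1}\(\mat P^{(N,n)}-\Id\)$ as linear operators on $\mathbb{R}^{\PS{n}}$. By the reduction above $A_N\to\mat Q^{(n)}_\Xi$ entrywise, hence in operator norm, and $\mat Q^{(n)}_\Xi$ is the bounded generator of the continuous-time Markov chain on $\PS{n}$ that is, by \eqref{eq:QratesFormula}, the $\Xi$-coalescent. Writing $\mat P^{(N,n)}=\Id+c_N\mat Q^{(n)}_\Xi+c_N\(A_N-\mat Q^{(n)}_\Xi\)$ and using $A_N-\mat Q^{(n)}_\Xi\to0$ together with $\lfloor t/c_N\rfloor c_N\to t$, the elementary semigroup estimate (the matrix analogue of $(1+x/m)^m\to e^x$) gives $\(\mat P^{(N,n)}\)^{\lfloor t/c_N\rfloor}\to e^{t\mat Q^{(n)}_\Xi}$ uniformly on compact time sets — this is precisely the convergence step of \cite{MohleSagitov2001}, here in the simple regime $\lim_N\mat P^{(N,n)}=\Id$ where no non-trivial projection appears. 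This already yields convergence of the finite-dimensional distributions of $\stp{\cpNn_{\lfloor t/c_N\rfloor}}$ to those of the $\Xi$-coalescent. To promote this to convergence in $D([0,\infty),\PS{n})$ I would use that the limit is a Feller process on a finite set with exponential, bounded-rate holding times (hence a.s.\ no deterministic jump times) and that $\(\cpNn_{\lfloor\cdot/c_N\rfloor}\)_N$ is tight, because $A_N\to\mat Q^{(n)}_\Xi$ gives a uniform bound on the expected number of jumps over any $[0,T]$; finite-dimensional convergence plus tightness then gives weak convergence in the Skorohod space.

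\emph{Main obstacle.} There is honestly no deep obstacle: the theorem is a re-reading of \cite{MohleSagitov2001}. The one step deserving real care is the propagation of \eqref{eq:TaylorMassPartition} to all rows of $\mat P^{(N,n)}$, which is exactly where the two modelling hypotheses are used — exchangeability of $\dPiN_h$ to make $\P(\dPiN_h|_{[b]}=\tau)$ depend only on $\tau$ and to play the role of the $[b]$-level transition probability, and consistency of the $\Xi$-coalescent to identify the limiting rate with $\mat Q^{(b)}_\Xi$. Beyond that, the only thing to verify is that the argument of \cite{MohleSagitov2001} nowhere secretly uses exchangeability of the \emph{offspring} distribution rather than of the coagulation increments — which is the entire content of the re-framing, and it does not.
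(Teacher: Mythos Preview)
Your proposal is correct and follows essentially the same route as the paper --- which is explicitly the argument of \cite{MohleSagitov2001} transplanted to this setting. The only substantive difference is that you are more careful about one step the paper glosses over: the hypothesis \eqref{eq:TaylorMassPartition} literally constrains only the $\ze_n$-row of $\mat P^{(N,n)}$, yet the paper's proof silently uses $\norm{\mat P^{(N,n)} - (\Id + c_N\mat Q^{(n)}_\Xi)} = \lo{c_N}$ as a full-matrix bound; your propagation to all rows via summing over restrictions and invoking the consistency of the family $(\mat Q^{(m)}_\Xi)_m$ is exactly what is needed to justify this. For part (ii), the paper obtains semigroup convergence through the telescoping identity $A^k - B^k = \sum_i A^{i-1}(A-B)B^{k-i}$ and then passes to Skorohod convergence by citing the Feller-process theorem (Theorem~2.5 of \cite{EthierKurtz86}); your route via the rescaled generator $A_N \to \mat Q^{(n)}_\Xi$ plus a direct tightness argument is equivalent, though invoking Ethier--Kurtz is a bit cleaner than arguing tightness by hand.
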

\begin{proof}
To prove $i)$ note that \eqref{eq:TaylorMassPartition} plus the condition $c_N\to c>0$ imply $\lim_{N\to\infty}\mat P^{(N,n)} = \Id + c\mat Q^{(n)}_{\Xi}$ which
in turn gives convergence of the finite dimensional distributions. The latter is equivalent to weak convergence on $\PS{n}^\N$ 
(see Example 2.6 in \cite{Billingsley99}).\par
Let us now prove $ii)$. Given that both $\stp{\cpNn_{\[t/c_N\]}}$ and the $\Xi$-coalescent are Feller processes, we need only prove convergence
of their semigroups (Theorem 2.5 in \cite{EthierKurtz86}). By means of the equality
\begin{align*}
\(\mat P^{(N,n)}\)^{k} - &\(\Id + c_N\mat Q^{(n)}_{\Xi}\)^{k}\\ 
&= \sum_{i=1}^k \(\mat P^{(N,n)}\)^{i-1}(\mat P^{(N,n)} - \(\Id + c_n\mat Q^{(n)}_{\Xi}\))\(\Id + c_N\mat Q^{(n)}_{\Xi}\)^{k-i}
\end{align*} (which follows by telescoping the sum on the right hand side), we compute, for every $t>0$,
\begin{align*}
&\norm{\left(\mat P^{(N,n)}\right)^{\[\frac{t}{c_N}\]} - (\Id + c_N\mat Q^{(n)}_{\Xi})^{\[\frac{t}{c_N}\]}}\\
&\leq \[\frac{t}{c_N}\] \norm{\mat P^{(N,n)} - (\Id + c_N\mat Q^{(n)}_{\Xi})} (1 + \norm{\PN - (\Id + c_N\mat Q^{(n)}_{\Xi})})^{\[\frac{t}{c_N}\]}\\
&\leq \[\frac{t}{c_N}\] \norm{\mat P^{(N,n)}  - (\Id + c_N\mat Q^{(n)}_{\Xi})} 
\exp\lbr \[\frac{t}{c_N}\] \(\lo{c_N} + \lo{c_N^2}  \)\rbr\\
&\overset{N\to \infty}{\longrightarrow} 0;
\end{align*}
where $\norm{\cdot}$ refers to the usual operator norm, and  we have used \eqref{eq:TaylorMassPartition}. 
Therefore
$$
\lim_{N\to\infty} \left(\mat P^{(N,n)}\right)^{\[\frac{t}{c_N}\]} = \lim_{N\to\infty} \left(\Id + c_N\mat Q^{(n)}_{\Xi}\right)^{\[\frac{t}{c_N}\]} = e^{t\mat Q^{(n)}_{\Xi}}.
$$
\end{proof}

\subsection{The Asymmetric Cannings framework}\label{sec:AC}

Consider a random population satisfying the static environment and non-heritability conditions of the AC model described above.
To construct its genealogy, set a time horizon $T>0$ and for $0\leq t \leq T$ let $A^{(N)}_t$ be the random partition of $[N]$ resulting from grouping all the individuals
at time $T$ according to their common ancestors at time $T-t$. Similarly, for $n\leq N$, let $(A^{(N,n)})_{0\leq t\leq T}$ be the genealogy of a random sample of $n$ individuals from the population at time $T$ constructed in the same way. The {static environment} and {non-heritability} conditions in AC models allow us to study the genealogy $(A^{(N,n)}_t)_{0\leq t\leq T}$
through a coalescent process constructed as in \eqref{eq:multiGenealogyDef}. Indeed,
the distribution of the coagulation increments $\dPiN_h$ should simply be set to the distribution of the random partition in the AC model resulting from grouping the
first $n$ individuals (or, by exchangeability, any random sample of $n$ individuals)
in generation $T-h+1$ according to their parents in the previous generation $T-h$.
Using the non-heritability condition one easily sees that the coagulation increments  $\dPiN_h$ do indeed coincide in distribution with those of $(A^{(N,n)}_t)_{0\leq t\leq T}$. On the other hand, the static environment condition
ensures that the sequence $\dPiN_h$ is i.i.d., and thus also the Markovicity of the process $(A^{(N,n)}_t)_{0\leq t\leq T}$. Putting both observations together we see that $(\cpNn_{t})_{0\leq t\leq T}$ and $(A^{(N,n)}_t)_{0\leq t\leq T}$ coincide in distribution. Furthermore, this allows us to take $T=\infty$
in  $(\cpNn_{t})_{0\leq t\leq T}$
to describe (in distribution) the genealogy of the AC model as the time horizon grows to infinity.\par

In the AC framework the matrix of transition probabilities $\mat P^{(N,n)}$ can be expressed as
\begin{equation*}
 \mat P^{(N,n)}(\tilde\pi) =  
\E\[\sum_{\substack{i_1,\dots,i_{\card{\tilde\pi}} \\ \text{all distinct}}}
 \frac{\(\osi{i_1}\)_{b_1} \cdots \(\osi{i_{\card{\tilde\pi}}}\)_{b_{\card{\tilde\pi}}}}
      {(N)_{n}}
\]
\end{equation*}
where for  $a\in\mathbb{R}$ and $b\in\N$ we have written $(a)_b= a(a-1)\cdots(a-b+1)$, and $b_k=\card{\dpi_k}$.
In particular, the probability $c_N$ that two randomly chosen individuals share the same parent in the previous generation is given by
\begin{equation}\label{eq:cN}
c_N= \frac{1}{N(N-1)}\sum_{i=1}^N \E\[\osi{i}\(\osi{i}-1\)\].\end{equation}
Plugging these into \eqref{eq:TaylorMassPartition}, Theorem \ref{th:multWeakConv} yields a general criterion for the convergence of the genealogy of AC populations.

\subsection{The Asymmetric Wright-Fisher framework}\label{sec:AWF}
Let us now focus on AWF models with weight vector $\rpv$.
Grouping children by the parent they choose gives in fact the coagulation
increment $\dPiN_h$ appearing in \eqref{eq:multiGenealogyDef}. Indeed, grouping children in this way is precisely the well-known paint-box construction (see e.g. section 2.3.2 in \cite{Bertoin2006}) of exchangeable random partitions
directed by the mass partition $\rpv$. By a simple ``putting balls into boxes'' argument analogous to that leading to \eqref{eq:QratesFormula} above, the transition matrix $\mat P^{(N,n)}$ of $\left(\cpNn_t\right)_{t\in\N}$ in given in this case   by
\begin{equation}\label{PNpaintbox}
 \mat P^{(N,n)}(\tilde\pi) =  
\E\[\sum_{\substack{i_1,\dots,i_{\card{\tilde\pi}} \\ \text{all distinct}}}
 \(\rpvN_{i_1}\)^{b_1} \cdots \(\rpvN_{i_{\card{\tilde\pi}}}\)^{b_{\card{\dpi}}} 
\]
\end{equation}
where $b_i\coloneqq\card{\dpi_i}$, $1\leq i\leq \card{\dpi}$. 
In particular, in 
the AWF setting, the probability 
that two randomly chosen individuals have the same parent in the previous generation is now
\begin{equation}\label{eq:multcN}
c_N = \E\[\sum_{i=1}^N \(\rpvN_i\)^2\].
\end{equation}
Plugging these into \eqref{eq:TaylorMassPartition}, Theorem \ref{th:multWeakConv} yields a general criterion for the convergence of the genealogy of AWF populations. \par

We now give a characterization of \eqref{eq:TaylorMassPartition} for AWF populations, in the same spirit as is done in \cite{MohleSagitov2001} for exchangeable offspring distributions.
{Our technique, and analogous results, should apply to more general cases.}
Let us introduce further notation. We call $(s_1,s_2,\dots)$ a 
size-biased reordering of a random probability vector $\rpv=\(\rpvN_1,\dots,\rpvN_N\)$ if, conditional on $\rpv$
and letting $K=\sum_{j=1}^N \Ind{\rpvN_j>0}$  be the number of non-zero entries of $\rpv$, 
the vector $(s_1,\dots,s_{K})$ is a size-biased reordering of the non-zero entries of $\rpv$ in the usual sense (see e.g. \cite{PitmanYor97}, or equation \eqref{eq:defSizeBiased} below),
and $s_j= 0$ for $j>K$. 

\begin{theorem}\label{th:multWeakConv2}
For an AWF population, the equality \eqref{eq:TaylorMassPartition}
holds for some $\Xi$-coalescent and $c_N\to0$, i.e. case ii) holds, if and only if the limits
\begin{align}\label{eq:multLimitCoagProbs}
&\phi_j(b_1,\dots,b_j)\nonumber\\=& \lim_{N\to\infty} \frac{1}{c_N}\E\[\prod_{i=1}^{j} \(\sbpN_i\)^{b_i-1}\prod_{i=1}^{j-1} \(1-\sum_{k=1}^i\sbpN_k\) \] 
\end{align}
 exist for all $j\in\N$ and $b_1\geq \dots b_j\geq2$, where $\(\sbpN_1,\sbpN_2,\dots\)$ is a size-biased reordering of $\rpv$ .
\end{theorem}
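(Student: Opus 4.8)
The plan is to identify the expectation inside the limit \eqref{eq:multLimitCoagProbs} with a rescaled one-step transition probability of $\cpNn$, so that Theorem \ref{th:multWeakConv} applies, and then to recover the measure $\Xi$ through the representation theory of exchangeable coalescents. (We work throughout in the regime $c_N\to0$ of case ii).) Everything hinges on an algebraic identity between the paint-box probabilities \eqref{PNpaintbox} and a size-biased reordering: writing size-biased sampling sequentially and telescoping the normalising denominators (and using $\sum_i\sbpN_i=1$ a.s.\ to dispose of the coordinates of $\rpv$ beyond its nonzero entries), one gets, for $m\geq1$ and integers $b_1,\dots,b_m\geq1$,
\begin{equation*}
\E\!\left[\sum_{\substack{i_1,\dots,i_m\\\text{all distinct}}}\!\!(\rpvN_{i_1})^{b_1}\cdots(\rpvN_{i_m})^{b_m}\right]=\E\!\left[\prod_{i=1}^{m}(\sbpN_i)^{b_i-1}\prod_{i=1}^{m-1}\Bigl(1-\sum_{k=1}^{i}\sbpN_k\Bigr)\right].
\end{equation*}
Comparing with \eqref{PNpaintbox}: for $b_1\geq\dots\geq b_j\geq2$ the right-hand side is exactly the expectation in \eqref{eq:multLimitCoagProbs}, while the left-hand side is $\mat P^{(N,b)}(\pi_{\mathbf b})$, where $b:=b_1+\dots+b_j$ and $\pi_{\mathbf b}\in\PS{b}$ is any partition with block sizes $b_1,\dots,b_j$ and no singletons. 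Hence $\phi_j(b_1,\dots,b_j)=\lim_N c_N^{-1}\mat P^{(N,b)}(\pi_{\mathbf b})$. This settles the ``only if'' direction at once: if case ii) holds for some $\Xi$, then \eqref{eq:TaylorMassPartition} applied to $\pi_{\mathbf b}$ gives $c_N^{-1}\mat P^{(N,b)}(\pi_{\mathbf b})\to\mat Q^{(b)}_{\Xi}(\pi_{\mathbf b})$, i.e.\ $\phi_j(b_1,\dots,b_j)$ exists and equals the $\Xi$-integral read off from \eqref{eq:QratesFormula}.

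For the ``if'' direction assume all the limits \eqref{eq:multLimitCoagProbs} exist. The first task is to upgrade this to convergence of $c_N^{-1}\mat P^{(N,n)}(\dpi)$ for every $\dpi\in\PS{n}\setminus\{\ze_n\}$ and every $n$. Using the telescoping identity together with $\sum_i\sbpN_i=1$, one checks that a partition $\dpi$ with non-singleton blocks of sizes $b_1\geq\dots\geq b_j\geq2$ ($j\geq1$ since $\dpi\neq\ze_n$) and $\sigma$ singletons satisfies $\mat P^{(N,n)}(\dpi)=\E\bigl[\prod_{i=1}^{j}(\sbpN_i)^{b_i-1}\prod_{i=1}^{j+\sigma-1}(1-\sum_{k\leq i}\sbpN_k)\bigr]$, the indicator factors $\Ind{\sbpN_{j+\ell}>0}$ that the singleton blocks would a priori contribute being automatically absorbed by the telescoping factors. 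Expanding the ``extra'' factors and peeling singletons off one at a time via the relation $\sum_i\rpvN_i=1$ (Newton's identities), $c_N^{-1}\mat P^{(N,n)}(\dpi)$ becomes a fixed, $N$-independent linear combination of the quantities \eqref{eq:multLimitCoagProbs}, hence converges to some $q_n(\dpi)\geq0$.

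It then remains to produce $\Xi$. Setting $q_n(\ze_n):=-\sum_{\dpi\neq\ze_n}q_n(\dpi)$, the family $(\mat Q^{(n)})_n:=(q_n)_n$ consists of conservative generators of continuous-time chains on $\PS{n}$ that only perform coagulations, are invariant under permutations of $[n]$ (from the exchangeability of the $\dPiN_h$), and are consistent under restriction $[n+1]\to[n]$ (from the sampling consistency of the paint-box construction, hence of the $\mat P^{(N,n)}$). By the representation theorem for exchangeable coalescents with simultaneous multiple collisions (\cite{Schweinsberg2000}; see also \cite{MohleSagitov2001,Bertoin2006}), there is a unique $\Xi$ on $\PS{[0,1]}$ with $\mat Q^{(n)}=\mat Q^{(n)}_{\Xi}$ for all $n$; then \eqref{eq:TaylorMassPartition} holds by construction, so case ii) holds.

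The routine parts are the telescoping identity and the ``only if'' direction. I expect the main obstacles, both on the ``if'' side and both already present (in a different form) in \cite{MohleSagitov2001}, to be: (i) the combinatorial bookkeeping of singleton blocks — making the expansion of $c_N^{-1}\mat P^{(N,n)}(\dpi)$ completely explicit and checking that cross terms are $\lo{c_N}$, with the $(\rho_i+\rho_0)$ factors of \eqref{eq:QratesFormula} emerging correctly from the limiting behaviour of the $1-\sum_{k\leq i}\sbpN_k$; and (ii) verifying that the limiting array $(q_n)_n$ genuinely satisfies all the hypotheses (positivity, conservativity, exchangeability and, above all, Kolmogorov-consistency) required to invoke the coalescent representation theorem — this is where one really uses that the $\mat P^{(N,n)}$ come from an honest sequence of exchangeable, sampling-consistent random partitions, and where the ``moment problem'' nature of the statement sits.
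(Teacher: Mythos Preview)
Your proposal is correct and follows essentially the same route as the paper: the telescoping identity rewriting $\mat P^{(N,n)}(\dpi)$ in terms of the size-biased pick, the recursion that upgrades convergence from block sizes $\geq 2$ to arbitrary partitions (your ``peeling singletons'' is exactly the paper's recursion \eqref{eq:limCoagRateRecursion}), and the final appeal to the representation theory of exchangeable coagulation measures all match. The only cosmetic difference is that the paper packages the last step via Carath\'eodory's extension on cylinder sets of $\PS{\infty}$ followed by Theorem~4.2 in \cite{Bertoin2006}, whereas you invoke the $\Xi$-coalescent representation theorem of \cite{Schweinsberg2000} directly; these are equivalent, and your anticipated obstacle about $o(c_N)$ cross terms does not actually arise since the identity and recursion are exact rather than asymptotic.
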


\begin{proof}
Recall \eqref{PNpaintbox}. Observe that if $\rpvN_{i}=0$ then the corresponding terms in the right hand side are zero. Also, 
for a size-biased reordering of $\rpv$ and 
for 
distinct indices $i_1,\dots,i_b$ such that $\eta_{i_j}>0$ for all $1\leq j\leq b$, we have 
\begin{align}\label{eq:defSizeBiased}
&\P\(\sbpN_k=\rpvN_{i_k}, 1\leq k\leq b\vert \rpv\)\nonumber\\
&=\prod_{k=1}^b \P\(\sbpN_k=\rpvN_{i_k} \vert \rpv, \sbpN_1, \cdots, \sbpN_{k-1}\)\nonumber\\
&=\prod_{k=1}^b \frac{\rpvN_{i_k}}{1-\sum_{j=1}^{k-1}\rpvN_{i_j}}.
\end{align}
Thus, writing $S^{(N)}_i=\sbpN_1+\dots+\sbpN_i$, and $(b_1,\cdots,b_{\card \dpi})$ for the sizes of the blocks of $\dpi$ arranged in non-increasing order,

\begin{align*}
&\mat P^{(N,n)}({\pi,\coag(\pi, \tilde\pi)})\\&=\sum_{ \substack{i_1,\dots,i_{\card{\tilde\pi}} \\ \text{all distinct}}}
 \E\[
     \(\rpvN_{i_1}\Ind{\rpvN_{i_1}>0}\)^{b_1} \cdots  \(\rpvN_{i_{\card{\dpi}}}\Ind{\rpvN_{i_{\card{\dpi}}}>0}\)^{b_{\card{\dpi}}} \]\\
&=\sum_{ \substack{i_1,\dots,i_{\card{\tilde\pi}} \\ \text{all distinct}}}
\E\[
    \prod_{k=1}^{\card{\dpi}} \frac{\rpvN_{i_k}\Ind{\rpvN_{i_k}>0}}{1-\sum_{j=1}^{k-1}\rpvN_{i_j}} (1-\sum_{j=1}^{k-1}\rpvN_{i_j})\(\rpvN_{i_k}\)^{b_k-1} \]\\
&=\E\[\sum_{ \substack{i_1,\dots,i_{\card{\tilde\pi}} \\ \text{all distinct}}}
             \P\(\sbpN_k=\rpvN_{i_k}, k \leq \card{\tilde\pi} \vert \rpv\) \prod_{i=1}^{\card\dpi}
(1-\sum_{j=1}^{k-1}\rpvN_{i_j})\(\rpvN_{i_k}\)^{b_k-1} \]\\
&=\E\[\E\[\prod_{i=1}^{\card\dpi}\(\sbpN_{i}\)^{b_i-1}  \prod_{i=1}^{\card{\tilde\pi}-1} (1-S^{(N)}_i); s^{(N)}_1>0,  \dots, s^{(N)}_{\card{\tilde\pi}}>0 \vert \rpv \]\]\\
&=\E\[\prod_{i=1}^{\card\dpi}\(\sbpN_{i}\)^{b_i-1}  \prod_{i=1}^{\card{\tilde\pi}-1} (1-S^{(N)}_i); s^{(N)}_1>0,  \dots, s^{(N)}_{\card{\tilde\pi}}>0 \].
\end{align*}
Note that the r.h.s. above does not depend on $\pi$ and that (depending only on its block sizes) it 
is invariant under permutations of the partition $\tilde\pi$, i.e.
for any finite permutation $\perm$ we have
\begin{equation}\label{eq:limCoagRatePermInv}
\mat P^{(N,n)}({\pi,\coag(\pi, \tilde\pi)}) =\mat P^{(N,n)}({\pi,\coag(\pi, \sigma(\tilde\pi))}).
\end{equation}\par

On the other hand, let us assume for the moment that \eqref{eq:multLimitCoagProbs} exists for all $j\geq1$ and  $b_1\geq \dots b_j\geq1$. Then,  we have the recursion, 
\begin{align}\label{eq:limCoagRateRecursion}
&\phi(b_1,\dots,b_j,1) + \phi(b_1+1,b_2,\dots,b_j) + \cdots + \phi(b_1,\cdots,b_j+1)\nonumber\\
=& \lim_{N\to\infty} \frac{\E\[(S^{(N)}_j + (1-S^{(N)}_j))\(\prod_{i=1}^{j} \(\sbpN_i\)^{b_j-1}\)\prod_{i=1}^{j-1} \(1-S^{(N)}_i\); s^{(N)}_i>0, 1\leq i\leq j\] }{c_N}\nonumber\\
=&\phi(b_1,\cdots,b_j).
\end{align}\par

Equations \eqref{eq:limCoagRatePermInv} and \eqref{eq:limCoagRateRecursion} (for all $b_1\geq \dots b_j\geq1$) allow us to construct 
a measure $\mu$ on $\PS{\infty}$. For any $n\in\N$  and $\pi'\in\PS{\infty}$ let $\pi'\vert_n$ be the restriction of $\pi'$ to  $[n]$, and denote by
$\PS{\infty}(\pi)$ the set $\PS{\infty}(\pi)\coloneqq \{\pi'\in\PS{\infty}\colon \pi'\vert_n=\pi\}$. Then, setting 
for any partition $\pi\in\PS{n}$ with ordered block sizes $(b_1,\cdots,b_{\card \pi})$,
\begin{equation}\label{eq:coagMeaFiniteAddit}
 \mu\(\PS{\infty}(\pi)\)= \phi_{\card \pi}(b_1,\dots,b_{\card \pi})
\end{equation}
defines a measure on $\PS{\infty}$.

Indeed, the measure $\mu$ can be formally constructed via Caratheodory's extension theorem applied to the
measure $\hat\mu$ on the semi-ring 
$\mathscr{S}\coloneqq\{\PS{\infty}(\pi) : \pi\in\PS{n}\setminus\ze_n, \hspace{6pt} n\in\N\}$ defined analogously
through \eqref{eq:coagMeaFiniteAddit}. 
The finite-additivity of $\hat\mu$  follows easily from \eqref{eq:coagMeaFiniteAddit}. Infinite-additivity
follows from the fact that $\PS{\infty}$ is compact (see Lemma 2.6 in \cite{Bertoin2006}) 
and the fact that the elements of $\mathscr{S}$ are both open and closed. Indeed, the latter implies that if  $\mathscr{S}\ni A = \cup_{i=1}^\infty A_i$, $A_i \in\mathscr{S}$, then, in fact,
there exists a finite collection of indices $i_1,\dots,i_k$ such that $A=\cup_{j=1}^k A_{i_j}$. Thus, infinite-additivity follows from finite-additivity. The measure $\mu$ being
invariant under finite permutations (by construction),  Theorem 4.2 in \cite{Bertoin2006} ensures the existence
of a measure $\Xi$ on $\PS{[0,1]}$ such that 
$$\mu(\pi)=\mat Q^{(n)}_{\Xi}(\pi),\quad \forall n\in\N, \pi\in\PS{n}\setminus \ze_n.$$\par

Finally, it only remains to note that in fact it is enough to verify \eqref{eq:multLimitCoagProbs} for block sizes satisfying 
 $b_1\geq b_2\geq \dots\geq b_j\geq 2$. In other words, we need to show that in this case the limit in \eqref{eq:multLimitCoagProbs} will also exist for the greater
set of block sizes $b_1\geq b_2\geq \dots\geq b_j\geq 1$. The latter follows from the  recursion in \eqref{eq:limCoagRateRecursion} which
yields that if \eqref{eq:multLimitCoagProbs} exists for  $b_1\geq b_2\geq \dots\geq b_j\geq 2$ then it also must exist for
 $b_1\geq b_2\geq \dots\geq b_j\geq 2, b_{j+1}=1$, which through a second call to \eqref{eq:limCoagRateRecursion}, entails that \eqref{eq:multLimitCoagProbs}   must also exist for
$b_1\geq b_2\geq \dots\geq b_j\geq 2, b_{j+1}=1, b_{j+2}=1$, and so forth. 
\end{proof}
\subsection{From AC to AWF models} \label{sec:generalOffspring}

{The main result of this section says
that, in most cases, the sampling (of the parents) with and without replacement 
become equivalent in the limit when $N\to\infty$; so that the genealogies of asymmetric neutral populations can be mostly
studied by the results described so far for AWF populations.}
In the same spirit as in \cite{Schweinsberg2003},
let us consider an extension of our current setting. We assume that the total offspring size $\Sigma_N=\sum_{i=1}^N \osi{i}$ satisfies
\begin{equation}\label{sigmaN}
\Sigma_N\geq N
\end{equation}
almost surely and, to form the next generation, a sample without replacement of size $N$ among the newly produced children is taken. Our next proposition is inspired by and generalizes the relationship between the models of \cite{Schweinsberg2003}
and \cite{HuilletMohle2021}. 

\begin{proposition}\label{prop:equivWWRep}
Let $\(\osi{1},\dots,\osi{N}\)$ be a random vector of non-negative integers such that \eqref{sigmaN} holds.
Let $\tilde{\mat P}^{(N,n)}$ be the corresponding coagulation probabilities for a sample
of $n$ individuals: for any $\tilde\pi\in\PS{n}$,
\begin{equation*}
 \tilde{\mat P}^{(N,n)}(\tilde\pi) =  
\E\[\sum_{\substack{i_1,\dots,i_{\card{\tilde\pi}} \\ \text{all distinct}}}
 \frac{\(\osi{i_1}\)_{b_1} \cdots \(\osi{i_{\card{\tilde\pi}}}\)_{b_{\card{\tilde\pi}}}}
      {(\Sigma_N)_{n}}
\].
\end{equation*}
Also let
\begin{equation}\label{eq:cntilde}
\tilde c_N\coloneqq \tilde{\mat P}^{(N,2)}(\{\{1,2\}\})\equiv \E\[\sum_{i=1}^N
 \frac{\osi{i}\(\osi{i}-1\) }
        {\Sigma_N(\Sigma_N-1)}
\].
\end{equation}
Set $\rpvN_i={\osi{i}}/{\Sigma_N}$ and consider the corresponding AWF population with coagulation
probabilities ${\mat P}^{(N,n)} $ as in \eqref{PNpaintbox} and $c_N$ as in \eqref{eq:multcN}. Then for every $n\geq 2$ there exists $C(n)>0$ such that, for sufficiently large $N$
 { and in the usual operator norm}
\begin{equation}\label{eq:probDiffWWRep}
\norm{\tilde{\mat P}^{(N,n)} - {\mat P}^{(N,n)} } \leq C(n)\E\[\frac{1}{\Sigma_N}\].
\end{equation}
In particular, whenever either $c_N/\E\[{1}/{\Sigma_N}\]\to\infty$ or 
 $\tilde c_N/\E\[{1}/{\Sigma_N}\] \to\infty$ as $N\to\infty$, we have $c_N\sim\tilde c_N$ and, furthermore,  
both models have the same limit genealogy in distribution. 
\end{proposition}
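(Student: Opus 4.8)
The plan is to compare the two transition matrices term by term. For a fixed block structure $(b_1,\dots,b_{\card{\tilde\pi}})$ with $\sum_k b_k = n$, the sampling-without-replacement matrix contains the factor $(\osi{i_1})_{b_1}\cdots(\osi{i_{\card{\tilde\pi}}})_{b_{\card{\tilde\pi}}}/(\Sigma_N)_n$, whereas the AWF matrix contains $(\osi{i_1}/\Sigma_N)^{b_1}\cdots(\osi{i_{\card{\tilde\pi}}}/\Sigma_N)^{b_{\card{\tilde\pi}}}$. The first step is the elementary estimate: for $a\in\N$ and fixed $b$, $0\le a^b-(a)_b\le \binom b2 a^{b-1}$, and similarly $(\Sigma_N)^{-n}\le (\Sigma_N)_n^{-1}\le (\Sigma_N)^{-n}(1+\bO{1/\Sigma_N})$ once $\Sigma_N\ge N\ge n$. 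Feeding these two bounds into the difference of the products, and using that $\sum_{i_1,\dots}\big(\osi{i_1}/\Sigma_N\big)^{\,\cdot}\cdots\le 1$ (a probability), one sees that each summand differs by at most a constant (depending on $n$ only) times $1/\Sigma_N$ times a product of ratios $\osi{\cdot}/\Sigma_N$ with one exponent lowered by one; summing over the $\bO{1}$ admissible index tuples and partitions $\tilde\pi\in\PS n$ and taking expectations yields $\norm{\tilde{\mat P}^{(N,n)}-{\mat P}^{(N,n)}}\le C(n)\,\E[1/\Sigma_N]$, which is \eqref{eq:probDiffWWRep}. The only mild subtlety is keeping the lowered-exponent products bounded by $1$ even when some $b_k-1$ becomes $1$ or $0$; this is handled by noting $\sum_i\osi i/\Sigma_N=1$ so any such partial sum is again $\le 1$.

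For the second step, specialise to $n=2$: the single off-diagonal entry gives $\abs{\tilde c_N-c_N}\le C(2)\E[1/\Sigma_N]$. Hence if either $c_N/\E[1/\Sigma_N]\to\infty$ or $\tilde c_N/\E[1/\Sigma_N]\to\infty$, dividing by whichever of $c_N,\tilde c_N$ is in the denominator shows the other ratio tends to the same limit, and in particular $c_N\sim\tilde c_N$ (and both $\to0$, since each is a probability bounded by $1$ while $\E[1/\Sigma_N]\le 1/N\to0$).

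For the final assertion — that the two models share the same limit genealogy in distribution — the plan is to invoke Theorem \ref{th:multWeakConv}. The AWF model has transition matrix ${\mat P}^{(N,n)}$; suppose it satisfies \eqref{eq:TaylorMassPartition} for some $\Xi$ with time scale $c_N$ (this is exactly the hypothesis under which ``the limit genealogy'' of the AWF model is defined). Then for the without-replacement model, write $\tilde{\mat P}^{(N,n)}(\dpi)={\mat P}^{(N,n)}(\dpi)+\bO{\E[1/\Sigma_N]}={\mat P}^{(N,n)}(\dpi)+\lo{c_N}$, using $\E[1/\Sigma_N]=\lo{c_N}$ from the divergence hypothesis; since also $\tilde c_N\sim c_N$, the pair $(\tilde{\mat P}^{(N,n)},\tilde c_N)$ satisfies \eqref{eq:TaylorMassPartition} with the same $\Xi$ and time scale $\tilde c_N$. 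Theorem \ref{th:multWeakConv}(ii) then gives convergence of $\stp{\tilde\cp^{(N,n)}_{\lfloor t/\tilde c_N\rfloor}}$ to the same $\Xi$-coalescent; and since $\tilde c_N\sim c_N$, the time changes $\lfloor t/c_N\rfloor$ and $\lfloor t/\tilde c_N\rfloor$ are interchangeable in the Skorohod limit. (Symmetrically, if instead it is the without-replacement model whose Taylor expansion is assumed, transfer it to the AWF side the same way.)

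I expect the main obstacle to be purely bookkeeping: controlling the difference of products $\prod(\osi{i_k})_{b_k}/(\Sigma_N)_n-\prod(\osi{i_k}/\Sigma_N)^{b_k}$ uniformly so that the leftover factor is still a sub-probability after one exponent is dropped, and verifying that the number of terms (index tuples and partitions of $[n]$) contributes only an $n$-dependent constant; the convergence part is then a direct citation of Theorem \ref{th:multWeakConv}.
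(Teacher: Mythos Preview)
Your plan is correct and follows essentially the same path as the paper: replace $(\Sigma_N)_n$ by $\Sigma_N^n$ with multiplicative error $1+O(1/\Sigma_N)$, then each $(\osi{i_k})_{b_k}$ by $(\osi{i_k})^{b_k}$ with additive error $O\big(\binom{b_k}{2}(\osi{i_k})^{b_k-1}\big)$, and control the residual sums using $\sum_i \rpvN_i = 1$. The paper's version differs only cosmetically: it uses the lower bound $(a)_b\ge (a-b+1)^b\ge a^b-b(b-1)a^{b-1}$ together with an indicator $\Ind{a>b-1}$, which must then be removed in a separate step (their \eqref{eq:equivRepl2}); your direct bound $0\le a^b-(a)_b\le\binom b2a^{b-1}$, valid for all $a\in\N$, combined with a telescoping of the product is slightly cleaner and avoids that detour.

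One small inaccuracy in the second step: your parenthetical ``and both $\to 0$, since each is a probability bounded by $1$ while $\E[1/\Sigma_N]\le 1/N\to0$'' is not a valid deduction --- the hypothesis $c_N/\E[1/\Sigma_N]\to\infty$ is perfectly consistent with $c_N\to c>0$ (indeed Corollary~\ref{cor:multiGenealogyDiscreteLimit} uses exactly this regime). The paper simply records $\norm{\tilde{\mat P}^{(N,n)}-{\mat P}^{(N,n)}}=o(c_N)=o(\tilde c_N)$, which transfers the expansion \eqref{eq:TaylorMassPartition} from one model to the other irrespective of whether $c_N\to 0$; either case of Theorem~\ref{th:multWeakConv} then yields the same limit. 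Adjust your final paragraph to invoke both cases (i) and (ii) rather than only (ii).
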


\begin{remark}
Many examples in the literature, in particular those whose limit genealogies are not described by Kingman's coalescent,  
satisfy either $\Sigma_N/N\to \infty$ or $c_NN\to\infty$, both of which imply the condition $c_N/\E\[{1}/{\Sigma_N}\]\to\infty$. However, Proposition \ref{prop:equivWWRep} cannot be used to show the well known equivalence between some classical models in which $c_N\sim 1/N$. For example between
 the (symmetric) Wright-Fisher model and the corresponding Cannings model where $\osN$ has multinomial distribution of parameter $(p_i=1/N)_{1\leq i\leq N}$.
 Both of these models converge to Kingman's coalescent. 
\end{remark}

\begin{proof}
Stirling's approximation in the form $\Gamma(m+a)/\Gamma(m+b)=m^{a-b}(1+\bO{1/m})$ as $m\to\infty$, 
and the condition \eqref{sigmaN} yield, for some constant $C\equiv C(n)$
and sufficiently large $N$, the almost sure inequalities
$$
\Sigma_N^n\(1-C/\Sigma_N\) {\leq} (\Sigma_N)_n {\leq}\Sigma_N^n\(1+C/\Sigma_N\)
$$
so that
$$
 \tilde{\mat P}^{(N,n)} (\tilde\pi) = \E\[\sum_{\substack{i_1,\dots,i_{\card{\tilde\pi}} \\ \text{all distinct}}}
 \frac{\(\osi{i_1}\)_{b_1} \cdots \(\osi{i_{\card{\tilde\pi}}}\)_{b_{\card{\tilde\pi}}}}
      {\Sigma_N^{n}}
\] + \bO{\E\[\frac{1}{\Sigma_N}\]}.
$$\par
Using that for $x\in[0,1]$, $(1-x)^b-1{\geq -bx}$ we obtain, for $a>b-1$,
$$
(a)_b\geq (a-b+1)^b {\geq a^b - a^{b-1}b(b-1)}.
$$
Then for any $1\leq j\leq\ell \leq n$
and $n\geq b_1\geq \cdots \geq b_j\geq 2, b_{j+1}=\cdots=b_{\ell}=1$ we have, writing 
$D_{i,b}\coloneqq\(\(\rpvN_{i}\)^{b} - \frac{\(\rpvN_{i}\)^{b-1}}{{\Sigma_N}}b(b{-}1)\)\Ind{\osi{i}>b-1}$
and setting empty products to 1,  
\begin{align}\label{eq:equivRepl0}
&\sum_{\substack{i_1,\dots,i_{\ell} \\ \text{all distinct}}}
 \frac{\(\osi{i_1}\)_{b_1}}{\Sigma_N^{b_1}} \cdots \frac{\(\osi{i_{j}}\)_{b_{j}}}{\Sigma_N^{b_j}}
      \prod_{k=j+1}^\ell \rpvN_{i_k}
\\
&\geq \sum_{\substack{i_1,\dots,i_{\ell} \\ \text{all distinct}}}
      \prod_{k=1}^j D_{i_k,b_k}\prod_{k=j+1}^\ell \rpvN_{i_k}\Ind{\osi{i_k}\geq 1}
\nonumber\\
&=
\sum_{\substack{i_1,\dots,i_{\ell} \\ \text{all distinct}}}
\(\rpvN_{i_1}\)^{b_1} \Ind{\osi{i_1}>b_1-1}
\prod_{k=2}^j D_{i_k,b_k} 
\prod_{k=j+1}^\ell \rpvN_{i_k}\Ind{\osi{i_k}\geq 1}\nonumber \\ 
&\quad - b_1(b_1-1)
\sum_{i_1=1}^N \frac{\(\rpvN_{i_1}\)^{b_1-1}}{\Sigma_N}\Ind{\osi{i_1}>b_1-1}\sum_{\substack{i_2,\dots,i_{\ell}\subset[N]\setminus \{i_1\} \\ \text{all distinct}}}
\prod_{k=2}^j D_{i_k,b_k} 
\prod_{k=j+1}^\ell \rpvN_{i_k}\Ind{\osi{i_k}\geq 1}
.\nonumber
\end{align} 
Since $\sum_{i=1}^N \rpvN_i = 1$, we obtain the following bound for the second term above
\begin{align}\label{eq:equivRepl0.1}
&
\sum_{i_1=1}^N \frac{\(\rpvN_{i_1}\)^{b_1-1}}{\Sigma_N}\Ind{\osi{i_1}>b_1-1}\sum_{\substack{i_2,\dots,i_{\ell}\subset[N]\setminus \{i_1\} \\ \text{all distinct}}}
\prod_{k=2}^j D_{i_k,b_k} 
\prod_{k=j+1}^\ell \rpvN_{i_k}\Ind{\osi{i_k}\geq 1}\nonumber\\ 
&\leq \frac{1}{\Sigma_N} \sum_{\substack{i_1,\dots,i_{\ell} \\ \text{all distinct}}} 
      \rpvN_{i_1}\cdots\rpvN_{i_\ell}\leq \frac{1}{\Sigma_N}.
\end{align}
Iterating over $\{1,\dots,j\}$  and plugging the corresponding bounds \eqref{eq:equivRepl0.1} into \eqref{eq:equivRepl0} we obtain, for some $C(b_1,\dots,b_j)\equiv C >0$,
\begin{align}\label{eq:equivRepl1}
&\E\[\sum_{\substack{i_1,\dots,i_{\ell}\\ \text{all distinct}}}
 \frac{\(\osi{i_1}\)_{b_1}}{\Sigma_N^{b_1}} \cdots \frac{\(\osi{i_{j}}\)_{b_{j}}}{\Sigma_N^{b_j}}
      \prod_{k=j+1}^\ell \rpvN_{i_k}
\]\nonumber\\
&\geq 
\E\[\sum_{\substack{i_1,\dots,i_{\ell} \\ \text{all distinct}}}
\prod_{k=1}^j\(\rpvN_{i_k}\)^{b_k}  \Ind{ \osi{i_k}>b_k-1}\prod_{k=j+1}^\ell \rpvN_{i_k}
\] - \E\[\frac{C}{\Sigma_N}\].
\end{align}
Now to ``remove'' the indicator functions appearing inside the sum above, observe that 
\begin{align}\label{eq:equivRepl2}
&\E\[
\sum_{\substack{i_1,\dots,i_{\ell} \\ \text{all distinct}}}
\(\rpvN_{i_1}\)^{b_1} \cdots \(\rpvN_{i_\ell}\)^{b_\ell} \Ind{\cup_{k=1}^j \{\osi{i_k}\leq b_k-1\}}
\]\\&
\leq\E\[ \sum_{\substack{i_1,\dots,i_{\ell} \\ \text{all distinct}}}
\(\rpvN_{i_1}\)^{b_1} \cdots \(\rpvN_{i_\ell}\)^{b_\ell} \sum_{k=1}^j \Ind{\osi{i_k}\leq b_k-1}
\]\nonumber\\
&\leq \sum_{k=1}^j
      \E\[\sum_{\substack{i_1,\dots,i_{\ell} \\ \text{all distinct}}}
        \(\rpvN_{i_1}\)^{b_1} \cdots \(\rpvN_{i_{k-1}}\)^{b_{k-1}} \(\frac{b_k-1}{\Sigma_N}\)^{b_k} \(\rpvN_{i_{k+1}}\)^{b_{k+1}} \cdots \(\rpvN_{i_j}\)^{b_{j}} 
\]\nonumber \\
&\leq 
\sum_{k=1}^j \E\[\(\frac{b_k-1}{\Sigma_N}\)^{b_k} 
      \sum_{\substack{i_1,\dots,i_{\ell-1} \\ \text{all distinct}}}
        \(\rpvN_{i_1}\)^{b_1} \cdots \(\rpvN_{i_{\ell-1}}\)^{b_{\ell-1}} 
\] \leq \sum_{k=1}^j \E\[\(\frac{b_k-1}{\Sigma_N}\)^{b_k}\].\nonumber
\end{align}
Since $n\geq b_1\geq b_\ell\geq 1$ and $j\leq \ell\leq n$, the last expression becomes bounded by $n(n-1)^{n}\E\[\Sigma_N^{-1}\]$. Thus,
adding and subtracting \eqref{eq:equivRepl2} into \eqref{eq:equivRepl1}, we obtain for some $C(b_1,\dots,b_\ell)\equiv C'>0$,
\begin{align}\label{eq:equivRepl3}
&\E\[\sum_{\substack{i_1,\dots,i_{\ell} \\ \text{all distinct}}}
 \frac{\(\osi{i_1}\)_{b_1}}{\Sigma_N^{b_1}} \cdots \frac{\(\osi{i_{j}}\)_{b_{j}}}{\Sigma_N^{b_j}}
      \prod_{k=j+1}^\ell \rpvN_{i_k}
\]\nonumber\\
&\geq
\E\[\sum_{\substack{i_1,\dots,i_{\ell} \\ \text{all distinct}}}
\(\rpvN_{i_1}\)^{b_1} \cdots \(\rpvN_{i_\ell}\)^{b_\ell} 
\] - \E\[\frac{C'}{\Sigma_N}\].
\end{align}
Since we also have the trivial bound
$
(a)_b \leq  a^b,
$
equation \eqref{eq:equivRepl3} already yields \eqref{eq:probDiffWWRep}.\par
The last statement of the proposition follows from observing that if $c_N/\E\[\Sigma_N^{-1}\]\to\infty$ as $N\to\infty$
then $\E\[\Sigma_N^{-1}\]=\lo{c_N}$ and, furthermore,
equation \eqref{eq:probDiffWWRep} (with $n=2$) yields $c_N\sim\tilde c_N$. Thus
$$
\norm{\tilde{\mat P}^{(N,n)} - {\mat P}^{(N,n)}}=\lo{c_N}=\lo{\tilde c_N}.
$$
\end{proof}
Let us end this section with a simple application of the above theorem.
\begin{corollary}\label{cor:multiGenealogyDiscreteLimit}
In the setting of Proposition \ref{prop:equivWWRep}, let $\bm \rho^{(N)}$ be the vector $\bm \eta^{(N)}$ rearranged in decreasing order. Assume that
$\bm \rho^{(N)}$ converges in distribution in $\PS{[0,1]}$ to $\bm \rho^{(\infty)}$ as $N\to\infty$. Then 
the genealogies of both the AWF and the corresponding AC models converge in distribution
in the product topology for $\(\PS{n}\)^\N$ to a Markov chain with initial state $\ze_n$ and transition matrix given
by 
$$
\mat{P}^{(\infty,n)}(\pi, \coag(\pi, \dpi)) = \P\(\tilde\Pi^{(\infty,n)} = \dpi\),
$$
where $\tilde\Pi^{(\infty,n)}\in\PS{n}$ is obtained through the paintbox construction from $\bm\rho^{(\infty)}$.
\end{corollary}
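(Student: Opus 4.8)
The plan is to reduce the statement to the entrywise convergence of one-step coagulation probabilities, $\mat P^{(N,n)}\to\mat P^{(\infty,n)}$, and then conclude exactly as in the proof of Theorem~\ref{th:multWeakConv}(i). First I would handle the AC model by reduction to the AWF one: since $\Sigma_N\ge N$ almost surely we have $\E[1/\Sigma_N]\le 1/N$, so Proposition~\ref{prop:equivWWRep} yields $\norm{\tilde{\mat P}^{(N,n)}-\mat P^{(N,n)}}\le C(n)\E[1/\Sigma_N]\to 0$; hence the two genealogies share the same limit (if any), and it suffices to prove the result for the AWF model with weights $\bm\eta^{(N)}$. Second, since each $\PS{n}$ is a finite set, weak convergence of $(\cpNn_t)_{t\in\N}$ in the product topology of $(\PS{n})^{\N}$ to the stated Markov chain is equivalent to convergence of the finite-dimensional distributions (Example~2.6 in \cite{Billingsley99}), and, all the chains being Markov with the common deterministic initial state $\ze_n$, this is in turn equivalent to the entrywise convergence $\mat P^{(N,n)}\to\mat P^{(\infty,n)}$. (Note that $\mat P^{(\infty,n)}$ is a legitimate stochastic matrix, the paint-box directed by $\bm\rho^{(\infty)}$ always producing a partition of $[n]$.)

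It thus remains to show that $\mat P^{(N,n)}(\pi,\coag(\pi,\dpi))\to\mat P^{(\infty,n)}(\pi,\coag(\pi,\dpi))$ for each $\dpi\in\PS{n}$. For a mass partition $\bm\rho\in\PS{[0,1]}$ let $h_{\dpi}(\bm\rho)\in[0,1]$ be the probability that the paint-box construction directed by $\bm\rho$ (with dust mass $1-\sum_i\rho_i$), applied to $n$ uniform points, produces $\dpi$. By \eqref{PNpaintbox}, since there the inner sum is a symmetric function of the weights and $\sum_i\eta^{(N)}_i=1$ (so $\bm\rho^{(N)}$ has no dust), one has $\mat P^{(N,n)}(\pi,\coag(\pi,\dpi))=\E[h_{\dpi}(\bm\rho^{(N)})]$; by definition $\mat P^{(\infty,n)}(\pi,\coag(\pi,\dpi))=\E[h_{\dpi}(\bm\rho^{(\infty)})]$. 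Because $h_{\dpi}$ is bounded and, as argued next, continuous on $(\PS{[0,1]},\Lp{\infty})$, the hypothesis that $\bm\rho^{(N)}$ converges in distribution to $\bm\rho^{(\infty)}$, together with the Portmanteau theorem, gives $\E[h_{\dpi}(\bm\rho^{(N)})]\to\E[h_{\dpi}(\bm\rho^{(\infty)})]$ --- exactly what is required.

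For the continuity of $h_{\dpi}$, fix a deterministic sequence $\bm\rho^{(N)}\to\bm\rho$ in $\Lp{\infty}$ and $\epsilon>0$, and pick $K$ with $\rho_{K+1}<\epsilon$, so that $\rho^{(N)}_j<2\epsilon$ for all $j>K$ once $N$ is large. Realize the paint-box with $n$ i.i.d.\ uniform points and split the subintervals of $[0,1]$ into the ``large'' ones, of indices $1,\dots,K$, and the ``rest'' (all subintervals of index $>K$ together with the dust interval). Off the event that two of the $n$ points fall into a common subinterval of index $>K$ --- an event of probability at most $\binom{n}{2}\cdot 2\epsilon$ under $\bm\rho^{(N)}$ and at most $\binom{n}{2}\epsilon$ under $\bm\rho$ --- the resulting partition equals the ``truncated'' one in which each point landing in the ``rest'' is simply declared a singleton (exactly as a dust point would be). Hence $h_{\dpi}(\bm\rho^{(N)})$ is within a constant multiple of $\epsilon$ of $\tilde h^{K}_{\dpi}(\bm\rho^{(N)})$, the probability of obtaining $\dpi$ when each point is independently sent to subinterval $j\le K$ with probability $\rho^{(N)}_j$ and made a singleton with the remaining probability; but $\tilde h^{K}_{\dpi}$ is a continuous function of the finitely many numbers $\rho^{(N)}_1,\dots,\rho^{(N)}_K$, which converge to $\rho_1,\dots,\rho_K$, and $\tilde h^{K}_{\dpi}(\bm\rho)$ is in turn within a constant multiple of $\epsilon$ of $h_{\dpi}(\bm\rho)$. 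Letting $N\to\infty$ and then $\epsilon\downarrow 0$ gives $h_{\dpi}(\bm\rho^{(N)})\to h_{\dpi}(\bm\rho)$.

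I expect this continuity statement to be the only genuine obstacle. The $\Lp{\infty}$ topology does \emph{not} make the total mass $\bm\rho\mapsto\sum_i\rho_i$ continuous --- mass may be pulverised into arbitrarily many, arbitrarily small blocks, as already occurs for the Wright--Fisher weights $\bm\rho^{(N)}=(1/N,\dots,1/N)\to(0,0,\dots)$ --- so one cannot pass to the limit naively inside the explicit paint-box formula \eqref{eq:QratesFormula}; the truncation above is precisely the device that reconciles the vanishing blocks of $\bm\rho^{(N)}$ with the dust component $1-\sum_i\rho^{(\infty)}_i$ of the limit. Everything else is the bookkeeping around \eqref{PNpaintbox} and the elementary transition-matrix argument already carried out for Theorem~\ref{th:multWeakConv}.
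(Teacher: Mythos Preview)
Your proof is correct and follows essentially the same route as the paper: show that the one-step transition matrices converge by continuity of the paint-box map, then conclude as in Theorem~\ref{th:multWeakConv}(i). The paper simply cites Proposition~2.9 in \cite{Bertoin2006} for the continuity of $\bm\rho\mapsto\text{Law}(\tilde\Pi^{(\cdot,n)})$, whereas you reprove it via the truncation argument; and for the AC reduction the paper invokes the condition $c_N/\E[1/\Sigma_N]\to\infty$ (using $c_N\to c>0$), while you use the norm bound \eqref{eq:probDiffWWRep} with $\E[1/\Sigma_N]\le 1/N$ directly---which is slightly cleaner and also covers the degenerate case $\bm\rho^{(\infty)}=0$.
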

\begin{proof}
Since $c_N\to \E[\sum_{i=1}^\infty (\rho^{(\infty)}_i)^2]>0$ and $\Sigma_N=N$, by Proposition \ref{prop:equivWWRep} we need only proof the result for the AWF model. 
Let $\tilde\Pi^{(N,n)}\in\PS{n}$ be a paint-box constructed from $\bm \rho^{(N)}$. Proposition 2.9 in \cite{Bertoin2006} readily gives,
for any $\dpi\in\PS{n}$,
$$\P\(\tilde\Pi^{(N,n)} = \dpi\) = \P\(\tilde\Pi^{(\infty,n)} = \dpi\) + \lo{1},$$ 
 which yields the condition
of Theorem \ref{th:multWeakConv} for the case $\lim_{N\to\infty}c_N>0$ with $c=\E[\sum_{i=1}^\infty (\rho^{(\infty)}_i)^2]$.
\end{proof}

\section{Simpler criteria}\label{seq:largeRepEvents}
In this section we provide a simpler sufficient condition for \eqref{eq:TaylorMassPartition} to hold
for AWF models whose genealogies converge to a $\Lambda$-coalescent (\cite{Pitman99, Sagitov99}). A $\Lambda$-coalescent corresponds to the case in which the coagulation measure $\Xi$ is supported on mass partitions of the form $(p,0,0,\cdots)$,
and we denote by $\Lambda$ the pushforward of $\Xi$ after projecting into the first coordinate $p\in(0,1)$. 
In particular, the partition-valued process evolves through simple coaugulations, in which a single group of $b$ blocks merge together at rate 
$$
\lambda_{n,b}\coloneqq \int_0^1 p^{b-2}(1-p)^{n-b}  \Lambda(dp)
$$
whenever there are $n$ blocks in the system.\par

There are two simple heuristics from distinct population dynamics
which motivate this section (and the appearance of $\Lambda$-coalescents in general): 1) all the individuals in the population have the same
fitness but occasionally, by mere chance, there occur  large reproductive events where a single 
individual takes up a large fraction of the offspring in the next generation, and
2) the offspring distribution is highly asymmetric 
and the family frequencies given by the mass-partition $\rpv$ 
are dominated by the largest family frequency (say $\rho^{(N)}_1$) associated to the individual with the highest reproductive success. 
In both of these scenarios 
we expect that all the coagulations that remain observable in the limit correspond to very large reproduction 
events of single individuals along the generations (see \cite{Schweinsberg2003, HuilletMohle2021, CortinesMallein2017}); 
these coagulations must then be driven by $\rho^{(N)}_1$. 
In practice we would rather allow more flexibility 
and prove that all the coagulations that ``survive'' in the limit are driven by the frequency
$\rpvN_{1}$. This entails that $\rpvN_{1}$ should approximate $\rho^{(N)}_1$ as $N\to\infty$, but working
with $\rpvN_{1}$ instead of $\rho^{(N)}_1$ may ease the study of the underlying genealogy of the population 
(also note that, the distribution of $\rpv$ having no restrictions, our setup does include the case
where $\rpvN_1=\rho^{(N)}_1$). Of particular interest is the case where $\(\rpvN_{1},\dots,\rpvN_{N}\)$ is a
(normalized) size-biased pick from a random mass partition for which the distribution may be explicitly known (see Section \ref{sec:multPDcase} for a more general example of this). \par
The first result, an easy consequence of Theorem \ref{th:multWeakConv}, reads as follows.
\begin{proposition}\label{prop:multSPlambda}
Assume that there exists $i\in[N]$ such that, for every $b\geq  2$,
\begin{equation}\label{critsimple}
\E\[\(\rpvN_i\)^b\] \sim c_N \frac{ \lambda_{b,b}}{\lambda_{2,2}}.
\end{equation}
Then the genealogy $\stp{\cpNn_{\lfloor t/c_N\rfloor}}$ of the corresponding AWF population
converges weakly in the Skorohod space $D([0,\infty), \PS{n})$ to a coalescent with characteristic measure $\Lambda$.
\end{proposition}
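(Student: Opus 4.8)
The plan is to deduce this from Theorem~\ref{th:multWeakConv}(ii): I would verify that the AWF transition matrices $\mat P^{(N,n)}$ satisfy the Taylor expansion \eqref{eq:TaylorMassPartition} with $c_N\to0$ and with $\mat Q^{(n)}_{\Xi}$ equal to the coagulation matrix of the $\Lambda$-coalescent --- i.e.\ the $\Xi$-coalescent whose directing measure is carried by mass partitions of the form $(p,0,0,\dots)$. As recalled just before the statement, for this matrix $\mat Q^{(n)}_{\Xi}(\dpi)=\lambda_{n,b}$ when $\dpi\in\PS{n}$ has a single block of size $b\ge2$ (the rest being singletons), and $\mat Q^{(n)}_{\Xi}(\dpi)=0$ when $\dpi$ has two or more non-singleton blocks. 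One normalisation point should be noted: \eqref{critsimple} at $b=2$ together with \eqref{eq:multcN} give $\mat P^{(N,2)}(\{\{1,2\}\})=c_N$ exactly, so the limiting measure is forced to be the probability measure $\Lambda/\lambda_{2,2}$; below I read $\Lambda$ in this normalisation (so $\lambda_{2,2}=1$).

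The engine of the argument is the observation that the distinguished index $i$ asymptotically carries all of $c_N$: \eqref{critsimple} at $b=2$ reads $\E[(\rpvN_i)^2]\sim c_N$, whence $\E[\sum_{j\ne i}(\rpvN_j)^2]=c_N-\E[(\rpvN_i)^2]=\lo{c_N}$. With this in hand I would check \eqref{eq:TaylorMassPartition} for the three kinds of coagulation increment $\dpi$. Take first $\dpi\in\PS{n}$ with one block of size $b\ge2$ and $m\coloneqq n-b$ singletons. Starting from the paint-box formula \eqref{PNpaintbox}, I would split the outer sum according to whether or not the non-singleton block is assigned to $i$: the terms assigning it to some $i_1\ne i$ contribute at most $\E[\sum_{i_1\ne i}(\rpvN_{i_1})^b]\le\E[\sum_{j\ne i}(\rpvN_j)^2]=\lo{c_N}$ (using $b\ge2$, that all coordinates of $\rpv$ are $\le1$, and that the sum over the singleton indices is $\le1$). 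In the surviving term I would replace the constrained sum over the $m$ distinct singleton indices in $[N]\setminus\{i\}$ by $(1-\rpvN_i)^m$; the difference is a sum over tuples with a repeated index and is therefore $\le C_m\sum_{j\ne i}(\rpvN_j)^2$ by the usual second-order multinomial estimate (using $\sum_j\rpvN_j=1$), hence $\lo{c_N}$ in expectation. This leaves $\mat P^{(N,n)}(\dpi)=\E[(\rpvN_i)^b(1-\rpvN_i)^m]+\lo{c_N}$; expanding the binomial, applying \eqref{critsimple} to each moment $\E[(\rpvN_i)^{b+k}]$ for $0\le k\le m$, and using $\sum_{k=0}^m\binom{m}{k}(-1)^k\lambda_{b+k,b+k}=\int_0^1 p^{b-2}(1-p)^m\,\Lambda(dp)=\lambda_{n,b}$, one obtains $\mat P^{(N,n)}(\dpi)=c_N\lambda_{n,b}+\lo{c_N}$, as needed.

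For $\dpi$ with two or more non-singleton blocks, say with the two largest of sizes $b_1\ge b_2\ge2$, one must show $\mat P^{(N,n)}(\dpi)=\lo{c_N}$, which matches $\mat Q^{(n)}_{\Xi}(\dpi)=0$. Bounding every factor except those attached to the two largest blocks by $1$, and using $b_1,b_2\ge2$ to replace $(\rpvN_{i_1})^{b_1}(\rpvN_{i_2})^{b_2}$ by $(\rpvN_{i_1})^2(\rpvN_{i_2})^2$, gives $\mat P^{(N,n)}(\dpi)\le\E[\sum_{i_1\ne i_2}(\rpvN_{i_1})^2(\rpvN_{i_2})^2]$; splitting according to whether $i_1=i$, $i_2=i$, or neither reduces each of the three pieces to $\E[\sum_{j\ne i}(\rpvN_j)^2]$ or to $\E[(\sum_{j\ne i}(\rpvN_j)^2)^2]\le\E[\sum_{j\ne i}(\rpvN_j)^2]$, all $\lo{c_N}$. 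Finally, since $\PS{n}$ is finite, the $\dpi=\ze_n$ line of \eqref{eq:TaylorMassPartition} follows by complementation from the two previous estimates. Theorem~\ref{th:multWeakConv}(ii) then yields the weak convergence of $\stp{\cpNn_{\lfloor t/c_N\rfloor}}$ in $D([0,\infty),\PS{n})$ to the $\Lambda$-coalescent.

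I expect the single-merger case to be the only genuine obstacle. The tempting shortcut there is to bound the error terms crudely by $\E[(\sum_j(\rpvN_j)^2)^2]$, but this is only $\bO{c_N}$ and not $\lo{c_N}$ (because $\E[(\rpvN_i)^4]$ is itself of order $c_N$, by \eqref{critsimple}); one must peel off the distinguished index $i$ \emph{before} estimating, so that every error is absorbed by the genuinely negligible mass $\sum_{j\ne i}(\rpvN_j)^2$. Once that is done, the second-order multinomial bound and the binomial-to-$\lambda_{n,b}$ bookkeeping are routine. One should also keep $c_N\to0$ as a standing hypothesis of this section: it is what makes part (ii) of Theorem~\ref{th:multWeakConv} applicable, whereas if $c_N\not\to0$ one would instead be in the regime of part (i), with a discrete-time limit.
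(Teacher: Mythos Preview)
Your proposal is correct and follows essentially the same route as the paper: peel off the distinguished index using $\E[\sum_{j\ne i}(\rpvN_j)^2]=o(c_N)$, reduce the single-merger transition probability to $\E[(\rpvN_i)^b(1-\rpvN_i)^{n-b}]$, and then identify this with $c_N\lambda_{n,b}$. The only cosmetic difference is in that last identification: you expand $(1-\rpvN_i)^m$ binomially and use $\sum_k\binom{m}{k}(-1)^k\lambda_{b+k,b+k}=\lambda_{n,b}$, whereas the paper proceeds by induction on $n-b$ via the Pitman recursion $\lambda_{n,b}=\lambda_{n+1,b}+\lambda_{n+1,b+1}$; the two are equivalent, and your explicit remark that $c_N\to0$ is needed (and that the limit is really $\Lambda/\lambda_{2,2}$) is a useful clarification the paper leaves implicit.
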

\begin{proof}
With no loss of generality, suppose that $i=1$.
The definition of $c_N$ in \eqref{eq:multcN},
plus the hypothesis \eqref{critsimple} when $b=2$ give
\begin{align}\label{ocN}
\E\[\sum_{i=2}^N\(\rpvN_i\)^2\] = \lo{c_N}.
\end{align}
Also note that, for the sum appearing in \eqref{PNpaintbox} but restricted to  indices greater than 2, we have
\begin{align}\label{eq:multiPNrecursiveBound2}
&\sum_{\substack{i_1,i_2,\dots,i_j \geq 2\\ \text{all distinct}}} \(\rpvN_{i_1}\)^{b_1}\cdots\(\rpvN_{i_j}\)^{b_j}\nonumber
\\= &\sum_{\substack{i_1,i_2,\dots,i_{j-1}\geq 2 \\ \text{all distinct}}} 
 \(\rpvN_{i_1}\)^{b_1}\cdots\(\rpvN_{i_{j-1}}\)^{b_{j-1}} \sum_{i_j\not\in \{i_1,\dots,i_{j-1}\}} \(\rpvN_{i_j}\)^{b_j}\nonumber
\\\leq& \sum_{\substack{i_1,i_2\dots,i_{j-1}\geq 2 \\ \text{all distinct}}}
\(\rpvN_{i_1}\)^{b_1}\cdots\(\rpvN_{i_{j-1}}\)^{b_{j-1}} \nonumber
\\\vdots&\nonumber
\\\leq &\sum_{i=2}^N \(\rpvN_{i}\)^{b_1}\leq \sum_{i=2}^N \(\rpvN_{i}\)^{2}.
\end{align}

The latter implies, for any $\pi\in\PS{n}$ and coagulation increment $\dpi\in\PS{\card{\pi}}\setminus \ze_{\card{\pi}}$ 
with ordered block sizes $b_1\geq \dots \geq b_{j}$, $j=\card{\dpi}$, 
\begin{align*}
{\mat P}^{(N,n)}({\pi,\coag\(\pi,\dpi\)})
&= \E\[\sum_{\substack{i_1=1,i_2,\dots,i_{j}\\\text{all distinct}}} \(\rpvN_{i_1}\)^{b_1}\cdots\(\rpvN_{i_j}\)^{b_j}\] + \lo{c_N}\\
&=\E\[\(\rpvN_{1}\)^{b_1} \sum_{\substack{i_2,\dots,i_j\geq2 \\ \text{all distinct}}} \(\rpvN_{i_2}\)^{b_2}\cdots\(\rpvN_{i_j}\)^{b_j}\]+\lo{c_N}.
\end{align*}

Again, using \eqref{eq:multiPNrecursiveBound2} and \eqref{ocN}, we have
\begin{align*}
&\E\[\(\rpvN_{1}\)^{b_1}\sum_{\substack{i_2,\dots,i_j\geq2\\\text{all distinct}}} \(\rpvN_{i_2}\)^{b_2}\cdots\(\rpvN_{i_j}\)^{b_j}\]\\
&\quad\quad = \begin{cases}
\lo{c_N} &\text{ if }b_2\geq 2,\\
\E\[\(\rpvN_{1}\)^{b_1}(1-\rpvN_{1})^{\card{\pi}-b_1}\] &\text{ otherwise}.
\end{cases}
\end{align*}

It thus remains to prove that 
\begin{equation}\label{eq:LambdaConvRates}
\E\[\(\rpvN_{1}\)^{b_1}(1-\rpvN_{1})^{\card{\pi}-b_1}\]\sim c_N\frac{\lambda_{\card{\pi},b_1}}{\lambda_{2,2}}
\end{equation}
 whenever $\card{\pi}\geq b_1\geq2$.
 This is true when $\card{\pi}= b_1$, thanks to the hypothesis \eqref{critsimple}.
 This can be shown in whole generality thanks to an induction over the values of $\card{\pi}- b_1\geq0$.
Given the well-known recursion formula $\lambda_{\card{\pi},b_1}=\lambda_{\card{\pi}+1,b_1}+\lambda_{\card{\pi}+1,b_1+1}$  (Lemma 18 \cite{Pitman99}) on the one side, 
and the recursion 
\begin{align*}
\E\[\(\rpvN_{1}\)^{b_1}(1-\rpvN_{1})^{\card{\pi}-b_1}\] =&
\E\[\(\rpvN_{1}\)^{b_1}(1-\rpvN_{1})^{\card{\pi}+1-b_1}\]\\+
&\E\[\(\rpvN_{1}\)^{b_1+1}(1-\rpvN_{1})^{\card{\pi}+1-b_1-1}\],
\end{align*}
on the other, it is not difficult to see that \eqref{eq:LambdaConvRates} for any choice of $\card{\pi}- b_1=k\geq0$ implies the same equivalence for the
cases $\card{\pi}-b_1=k+1$. 
\end{proof}

The above proposition provides a mild improvement to Lemma 3.1 in \cite{CortinesMallein2017} since, in our case, the family frequencies need not to be arranged in decreasing order. We now present our simplified criterion for the convergence to Kingman's coalescent which is in correspondence
with their Lemma 3.2. Note that, in this case, and as is common in scenarios with convergence to Kingman's coalescent, it is no longer the case that the coagulation events that remain in the limit are driven by a single family frequency $\eta^{(N)}_i$.

\begin{proposition}\label{prop:multiKingman2}
Assume that
$$
\E\[\(\rpvN_2\)^3 + \dots + \(\rpvN_N\)^3\] = \lo{c_N},
$$
and that there exists $\beta>2$ such that
$$
   \E\[\(\rpvN_1\)^\beta\] = \lo{c_N}.
$$ 
Then
\begin{equation}\label{eq:kingCondition}
\E\[\(\rpvN_1\)^3 + \dots + \(\rpvN_N\)^3\] = \lo{c_N}.
\end{equation}
If, in addition, $c_N\to0$, then the genealogy $\stp{\cpNn_{\lfloor t/c_N\rfloor}}$ of the corresponding AWF population
converges weakly in the Skorohod topology for
$D([0,\infty), \PS{n})$ to Kingman's coalescent.
\end{proposition}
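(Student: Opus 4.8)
The plan is to reduce the whole statement to Theorem~\ref{th:multWeakConv}~ii), applied with $\mat{Q}^{(n)}_\Xi$ the coagulation matrix of Kingman's coalescent --- i.e.\ of the $\Xi$-coalescent with $\Xi=\delta_{\ze}$, whose only nonzero off-diagonal entries are $\mat{Q}^{(n)}_\Xi(\ze_n,\dpi)=1$ for each $\dpi$ with exactly one block of size two and $n-2$ singletons. So there are two things to do: first derive \eqref{eq:kingCondition} from the two hypotheses, and then use \eqref{eq:kingCondition} together with a few elementary consequences of the hypotheses to verify \eqref{eq:TaylorMassPartition} for this $\mat{Q}^{(n)}_\Xi$, at which point Theorem~\ref{th:multWeakConv}~ii) finishes the job since $c_N\to0$ is assumed.

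For \eqref{eq:kingCondition} I would split $\E[\sum_{i=1}^N(\rpvN_i)^3]=\E[(\rpvN_1)^3]+\E[\sum_{i=2}^N(\rpvN_i)^3]$; the last term is $o(c_N)$ by hypothesis, so it remains to show $\E[(\rpvN_1)^3]=o(c_N)$. Since $\rpvN_1\in[0,1]$, if $\beta\le 3$ then $(\rpvN_1)^3\le(\rpvN_1)^\beta$ and the second hypothesis suffices. If $\beta>3$, I would interpolate between the exponents $2$ and $\beta$ by log-convexity of moments (Hölder's inequality): $\E[(\rpvN_1)^3]\le(\E[(\rpvN_1)^2])^{(\beta-3)/(\beta-2)}(\E[(\rpvN_1)^\beta])^{1/(\beta-2)}$. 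As $\E[(\rpvN_1)^2]\le c_N$ by \eqref{eq:multcN} and $\E[(\rpvN_1)^\beta]=o(c_N)$, the right-hand side is $c_N^{(\beta-3)/(\beta-2)}\cdot o(c_N)^{1/(\beta-2)}=o(c_N)$, giving \eqref{eq:kingCondition}.

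For \eqref{eq:TaylorMassPartition} I would first record three consequences of \eqref{eq:kingCondition} and the hypotheses: $\E[(\rpvN_1)^m]=o(c_N)$ for every integer $m\ge 3$ (since $(\rpvN_1)^m\le(\rpvN_1)^3$); $\E[\sum_{i\ge 2}(\rpvN_i)^m]=o(c_N)$ for every $m\ge 3$ (same reason); and, most importantly, $\E[(\sum_{i=1}^N(\rpvN_i)^2)^2]=o(c_N)$, obtained by writing $\sum_i(\rpvN_i)^2=(\rpvN_1)^2+R$ with $R:=\sum_{i\ge 2}(\rpvN_i)^2$, bounding $R^2\le(\sum_{i\ge 2}\rpvN_i)(\sum_{i\ge 2}(\rpvN_i)^3)\le\sum_{i\ge 2}(\rpvN_i)^3$ by Cauchy--Schwarz, and controlling the cross term by $2(\rpvN_1)^2 R\le(\rpvN_1)^4+R^2$. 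Then, since $\mat{P}^{(N,n)}(\dpi)$ depends only on the ordered block sizes $b_1\ge b_2\ge\cdots$ of $\dpi$ (equation \eqref{eq:limCoagRatePermInv}), I would go through the cases $\dpi\in\PS{n}\setminus\ze_n$: if $b_1\ge 3$, the recursive estimate \eqref{eq:multiPNrecursiveBound2} gives $\mat{P}^{(N,n)}(\dpi)\le\E[\sum_i(\rpvN_i)^{b_1}]\le\E[\sum_i(\rpvN_i)^3]=o(c_N)$; if $b_1=2$ and $b_2\ge 2$, the same kind of estimate gives $\mat{P}^{(N,n)}(\dpi)\le\E[\sum_{i_1\ne i_2}(\rpvN_{i_1})^2(\rpvN_{i_2})^2]\le\E[(\sum_i(\rpvN_i)^2)^2]=o(c_N)$; and if $\dpi$ is a single pair, I would factor out the quadratic term in \eqref{PNpaintbox} and bracket the remaining sum over distinct singleton-indices between $1-\epsilon_N$ and $1$, where the defect $\epsilon_N$ collects exactly the coincidence terms that distinguish sampling with and without replacement and, after multiplication by $(\rpvN_{i_1})^2$ and summation, is bounded by a fixed multiple of $\E[\sum_i(\rpvN_i)^3]+\E[(\sum_i(\rpvN_i)^2)^2]=o(c_N)$, so that $\mat{P}^{(N,n)}(\dpi)=c_N+o(c_N)$. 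Since there are $\binom{n}{2}$ single-pair increments, summing over all $\dpi\ne\ze_n$ also forces $\mat{P}^{(N,n)}(\ze_n)=1-\binom{n}{2}c_N+o(c_N)=1-c_N\sum_{\pi\ne\ze_n}\mat{Q}^{(n)}_\Xi(\pi)+o(c_N)$; this is \eqref{eq:TaylorMassPartition}, and Theorem~\ref{th:multWeakConv}~ii) then gives weak convergence of $\stp{\cpNn_{\lfloor t/c_N\rfloor}}$ in $D([0,\infty),\PS{n})$ to Kingman's coalescent.

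The last step is essentially bookkeeping of the ``balls in boxes'' sums already handled in the proofs of Proposition~\ref{prop:multSPlambda} and Theorem~\ref{th:multWeakConv2}, so I do not expect difficulty there. The two points that need real input are the interpolation above --- where one must combine the crude $\E[(\rpvN_1)^2]\le c_N$ with the sharper $\E[(\rpvN_1)^\beta]=o(c_N)$ rather than comparing powers of $\rpvN_1$ directly --- and the estimate $\E[(\sum_i(\rpvN_i)^2)^2]=o(c_N)$, which is what simultaneously kills the ``two large blocks'' increments and the without-replacement corrections, and which genuinely uses \emph{both} hypotheses (the second moment alone only yields $\bO{c_N}$). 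I expect the latter to be the main obstacle: it forces a careful split of the mass partition into the distinguished coordinate $\rpvN_1$ and the remainder, with the two pieces tracked separately.
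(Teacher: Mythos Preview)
Your proposal is correct and follows the paper's approach: Hölder interpolation between the second moment and the $\beta$-th moment of $\rpvN_1$ to derive \eqref{eq:kingCondition}, then bounding the multi-merge transition probabilities by $\lo{c_N}$ and invoking Theorem~\ref{th:multWeakConv}~ii). Two places where the paper is more economical. First, your bound on $\E\bigl[(\sum_i(\rpvN_i)^2)^2\bigr]$ via the split $(\rpvN_1)^2+R$ works, but the paper gets it in one stroke by Cauchy--Schwarz (equivalently Jensen with weights $\rpvN_i$): $\bigl(\sum_i(\rpvN_i)^2\bigr)^2=\bigl(\sum_i\rpvN_i\cdot\rpvN_i\bigr)^2\le\bigl(\sum_i\rpvN_i\bigr)\sum_i(\rpvN_i)^3=\sum_i(\rpvN_i)^3$, so no separation of the first coordinate is needed and \eqref{eq:kingCondition} alone suffices. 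Second, the paper does not treat the single-pair case explicitly; once every $\dpi$ with $b_1\ge3$ or $b_2\ge2$ has probability $\lo{c_N}$, the identity $c_N=\P(1\text{ and }2\text{ in the same block})=\sum_{\dpi:\,1\sim2}\mat P^{(N,n)}(\dpi)$ forces $\mat P^{(N,n)}(\text{single pair})=c_N+\lo{c_N}$ automatically, and $\mat P^{(N,n)}(\ze_n)$ then follows by complementation --- so your bracketing of the singleton-index sum, while correct, is not needed.
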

\begin{proof}
We first prove that the second hypothesis implies $$\E\[\(\rpvN_1\)^3\]=\lo{c_N}.$$ 
We follow an argument of \cite{CortinesMallein2017}. The result
is trivial if $\beta\leq 3$, we thus assume $\beta>3$.
Observe that for $\lambda\in(0,2)$ we have, using Hölder's inequality,
\begin{align*}
\E\[\(\rpvN_{1}\)^{3}\]
&= \E\[\(\rpvN_{1}\)^{\lambda}\(\rpvN_{1}\)^{3-\lambda}\]\\
&\leq \E\[\(\rpvN_{1}\)^2\]^{\lambda/2}\E\[\(\rpvN_{1}\)^{\frac{2(3-\lambda)}{2-\lambda}}\]^{1-\lambda/2}
\end{align*}
so that, choosing $\lambda\in(0,2)$ to ensure $\frac{2(3-\lambda)}{2-\lambda}>\beta$,  we obtain by the second hypothesis, 
\begin{align*}
\frac{\E\[\(\rpvN_{1}\)^{3}\]}{c_N}
&\leq \frac{\E\[\(\rpvN_{1}\)^{\frac{2(3-\lambda)}{2-\lambda}}\]^{1-\lambda/2}}{c_N^{1-\lambda/2}}\\
&\leq \(c^{-1}_N\E\[\(\rpvN_{1}\)^{\beta}\]\)^{1-\lambda/2},
\end{align*}
converging to 0 when ${N\to\infty}$. 
Combining with the first hypothesis, we conclude \eqref{eq:kingCondition}.
\par
Now, let  $b_1\geq\cdots b_j\geq 1$ be the ordered block sizes of the coagulation increment $\pi'$ in \eqref{PNpaintbox}.  By a similar computation as that in \eqref{eq:multiPNrecursiveBound2} we have
$$
\sum_{\substack{i_1,i_2,\dots,i_j \nonumber\\ \text{all distinct}}} \(\rpvN_{i_1}\)^{b_1}\cdots\(\rpvN_{i_j}\)^{b_j}
\leq\sum_{i=1}^N \(\rpvN_{i}\)^{b_1}.
$$
This, together with \eqref{PNpaintbox} and \eqref{eq:kingCondition} yield
$
\PN_{\pi,\coag\(\pi,\dpi\)} = \lo{c_N}
$
whenever $b_1\geq 3$.
Similarly, whenever $b_2\geq 2$ and using Jensen's inequality, we have
\begin{align*}
&\sum_{\substack{i_1,i_2,\dots,i_j \nonumber\\ \text{all distinct}}} \(\rpvN_{i_1}\)^{b_1}\cdots\(\rpvN_{i_j}\)^{b_j} \leq \sum_{\substack{i_1,i_2\\\text{distinct}}} \(\rpvN_{i_1}\)^{2}\(\rpvN_{i_2}\)^{2}\\
&\leq \( \sum_{i=1}^N (\rpvN_i)^2\)^2 \leq \sum_{i=1}^N \rpvN_i (\rpvN_i)^2.
\end{align*}
Thus, \eqref{eq:kingCondition} now implies $\PN_{\pi,\coag\(\pi,\dpi\)} = \lo{c_N}$ whenever $b_2\geq 2$. Since, by hypothesis, we also have $c_N\to0$, the convergence to Kingman's coalescent follows by ii) of Theorem \ref{th:multWeakConv}.
\end{proof}

\section{Some examples for neutral evolution}\label{neutralmodels}

\subsection{The Eldon and Wakeley model}
 Let us consider a simple example of a non-multinomial asymmetric Cannings model, a generalization of the celebrated model
studied in \cite{EldonWakeley2006}, whose genealogy  can be easily studied using our criteria. \par
Let $\(Y_t^{(N)}\)_{t\in\N}$ be a collection of i.i.d. r.vs. on $[0,1]$. 
Consider a population where, at any given generation $t$, a randomly
chosen individual produces $\floor{Y_t^{(N)}N}$ copies of itself which then replace 
$\floor{Y_t^{(N)}N}$ randomly chosen individuals 
 (uniformly and without replacement) in the next generation. Formally, the corresponding random vector of offspring sizes can
be constructed as follows. We set $\osi{K}=\floor{Y^{(N)}N}$ for some random variable $Y^{(N)}$ in $[0,1]$ and some uniformly-distributed 
index $K\in[N]$. Conditional on those random variables,
we set $\osi{i}=1$ for all $i\in I$, where $I$ is a uniform random sample of size $N-\floor{Y^{(N)}N}$  
from $[N]\setminus \{K\}$ without replacement. Finally we set
$\osi{i}=0$ for all $i\in [N]\setminus \{\{K\}\cup I\}$. 
\begin{corollary}
Assume that the distribution of $Y^{(N)}$ is given by 
$$\P(Y^{(N)}\in dy)=\frac{y^{-2}\Lambda(dy)}{\int_{N^{\frac{\epsilon-1}{2}}}^1 y^{-2}\Lambda(dy)}\Ind{y>N^{\frac{\epsilon-1}{2}}}$$
for some finite measure $\Lambda$ on $[0,1]$ and some $\epsilon\in(0,1)$.
Then $ c_N\sim \E\[\(Y^{(N)}\)^2\]$ and $\stp{\cpNn_{\floor{t/ c_N}}}$ 
converges weakly in $D([0,\infty), \PS{n})$ to the $\Lambda$-coalescent. 
\end{corollary}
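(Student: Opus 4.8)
The plan is to realize this Eldon--Wakeley-type model as an instance of the Asymmetric Cannings framework and then apply Proposition~\ref{prop:equivWWRep} together with Proposition~\ref{prop:multSPlambda}. First I would identify the offspring vector $\osN$ described above: one randomly chosen individual $K$ has $\osi{K}=\lfloor Y^{(N)}N\rfloor$ offspring, a uniform sample $I$ of the remaining individuals have one offspring each, and the rest have none. Since $\Sigma_N=\lfloor Y^{(N)}N\rfloor + (N-\lfloor Y^{(N)}N\rfloor)=N$, the total offspring size is exactly $N$, so in the language of Proposition~\ref{prop:equivWWRep} we are in the regime $\Sigma_N=N$ and the AC model with sampling without replacement is literally the model at hand; moreover $\tilde c_N = c_N$ and $\E[1/\Sigma_N]=1/N$. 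To invoke the equivalence with the AWF model I would check that $c_N/\E[1/\Sigma_N]=Nc_N\to\infty$; this follows once we compute $c_N$.

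Next I would compute $c_N$ from \eqref{eq:cN}. Only the index $K$ contributes a term of order larger than the linear-offspring indices (which contribute $0$ since $\osi{i}(\osi{i}-1)=0$ when $\osi{i}\in\{0,1\}$), so
\[
c_N=\frac{1}{N(N-1)}\,\E\!\left[\lfloor Y^{(N)}N\rfloor\big(\lfloor Y^{(N)}N\rfloor-1\big)\right]
= \E\!\left[\big(Y^{(N)}\big)^2\right]\big(1+o(1)\big),
\]
using $\lfloor Y^{(N)}N\rfloor = Y^{(N)}N + O(1)$ and the fact that $Y^{(N)}\ge N^{(\epsilon-1)/2}$ forces $Y^{(N)}N\ge N^{(\epsilon+1)/2}\to\infty$, so the $O(1)$ corrections and the $N/(N-1)$ factor are negligible relative to the main term. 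This gives the claimed $c_N\sim\E[(Y^{(N)})^2]$. To see $Nc_N\to\infty$, note $\E[(Y^{(N)})^2] = \int_{N^{(\epsilon-1)/2}}^1 \Lambda(dy)\big/\int_{N^{(\epsilon-1)/2}}^1 y^{-2}\Lambda(dy)$; one has to be a little careful here, but the key point is that the normalization makes $\E[(Y^{(N)})^2]$ of order at least $N^{\epsilon-1}$ (the lower cutoff squared), up to the $\Lambda$-mass near the cutoff, so $Nc_N\gtrsim N^{\epsilon}\to\infty$. This is the step I expect to require the most care: controlling the interplay between the truncation level $N^{(\epsilon-1)/2}$ and the behaviour of $\Lambda$ near $0$, to guarantee $Nc_N\to\infty$ regardless of whether $\Lambda$ has an atom at $0$ or a density blowing up there.

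Having secured the AWF reduction via Proposition~\ref{prop:equivWWRep}, the corresponding AWF weight vector is $\rpv$ with $\rpvN_K = \lfloor Y^{(N)}N\rfloor/N$ and the other non-zero entries equal to $1/N$. I would then verify hypothesis \eqref{critsimple} of Proposition~\ref{prop:multSPlambda} with $i=K$: for every $b\ge2$,
\[
\E\!\left[\big(\rpvN_K\big)^b\right] = \E\!\left[\big(Y^{(N)}\big)^b\right]\big(1+o(1)\big)
= \int_{N^{(\epsilon-1)/2}}^1 y^{b-2}\,\Lambda(dy)\Big/\!\int_{N^{(\epsilon-1)/2}}^1 y^{-2}\,\Lambda(dy)\;\big(1+o(1)\big),
\]
and the numerator converges to $\int_0^1 y^{b-2}\Lambda(dy)=\lambda_{b,b}$ by monotone/dominated convergence (for $b\ge2$ the integrand $y^{b-2}$ is bounded, so removing the cutoff as $N\to\infty$ is harmless), while the denominator times $\E[(Y^{(N)})^2]$ tends to $1$ by the same token with $b=2$, giving $\lambda_{2,2}=\Lambda([0,1])$. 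Hence $\E[(\rpvN_K)^b]\sim c_N\,\lambda_{b,b}/\lambda_{2,2}$, so Proposition~\ref{prop:multSPlambda} yields convergence of $\stp{\cpNn_{\lfloor t/c_N\rfloor}}$ to the $\Lambda$-coalescent in $D([0,\infty),\PS{n})$. Finally I would remark that $c_N\to0$: indeed $c_N\sim\E[(Y^{(N)})^2]\le \lambda_{2,2}/\int_{N^{(\epsilon-1)/2}}^1 y^{-2}\Lambda(dy)\to 0$ since the denominator diverges as the cutoff shrinks (unless $\Lambda=0$, a degenerate case). One subtlety worth flagging: I should treat separately the possibility that $\Lambda$ puts no mass on $(0,1]$ away from $0$ in a way that makes the normalizing constant finite in the limit, which would land us in case~i) rather than case~ii); but the stated hypotheses and the phrasing of the corollary implicitly exclude this, so I would simply note that the denominator $\int_{N^{(\epsilon-1)/2}}^1 y^{-2}\Lambda(dy)\to\infty$ whenever $\Lambda\neq 0$, which is what drives $c_N\to0$.
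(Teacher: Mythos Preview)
Your approach is the same as the paper's: reduce to the AWF model via Proposition~\ref{prop:equivWWRep} and then check the hypothesis of Proposition~\ref{prop:multSPlambda}. Two small corrections are worth making. First, Proposition~\ref{prop:multSPlambda} requires a \emph{deterministic} index $i\in[N]$, so you cannot literally take $i=K$ with $K$ random; the paper handles this at the outset by invoking non-heritability to assume $K=1$ almost surely (relabelling does not change the law of the genealogy), and you should do the same. Second, the $Nc_N\to\infty$ step that you flag as delicate is in fact a one-liner: on the support of $Y^{(N)}$ one has $y^{-2}\le N^{1-\epsilon}$, hence $\int_{N^{(\epsilon-1)/2}}^{1} y^{-2}\Lambda(dy)\le N^{1-\epsilon}\Lambda([0,1])$ and therefore $Nc_N\gtrsim N^{\epsilon}\to\infty$, with no need to worry about how $\Lambda$ behaves near $0$. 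Lastly, your closing claim that the normalising integral $\int_{N^{(\epsilon-1)/2}}^{1} y^{-2}\Lambda(dy)$ diverges whenever $\Lambda\neq0$ is false (take $\Lambda=\delta_{1/2}$), so your argument for $c_N\to0$ does not work in general; the paper's proof, however, does not verify $c_N\to0$ either and simply applies Proposition~\ref{prop:multSPlambda}, so this is a quibble about the statement rather than a defect of your strategy.
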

\begin{proof}
By the non-heritability condition, the model is equivalent, in terms of its genealogy and in distribution,
to the case in which $K{=}1$ almost surely. Furthermore, recalling its definition in \eqref{eq:cN},
\begin{align*}
 c_N&=  
\frac{1}{N(N-1)}\E[{\floor{Y^{(N)}N}(\floor{Y^{(N)}N}-1)}] \\
&\sim \E\[(Y^{(N)})^2\]
=\frac{\Lambda([N^{\frac{\epsilon-1}{2}},1])}{\int_{N^{\frac{\epsilon-1}{2}}}^1 y^{-2}\Lambda(dy)}.
\end{align*}
Since 
$\int_{N^{\frac{\epsilon-1}{2}}}^1 y^{-2}\Lambda(dy)\leq N^{1-\epsilon}\Lambda([0,1])$ we have
$N c_N\to\infty$. Then, by Proposition \ref{prop:equivWWRep} we may work with the AWF version of the model with 
$$\eta_1^{(N)}=\frac{\floor{Y^{(N)}N}}{N}= Y^{(N)}+\bO{N^{-1}}=Y^{(N)}(1+\bO{N^{-1+\frac{1-\eps}{2}}}).$$ 
Then, by Proposition \ref{prop:multSPlambda},
we need only observe that, for all $b\geq2$,
$$
\lim_{N\to\infty} \frac{\E\[(Y^{(N)})^b\]}{\E\[(Y^{(N)})^2\]}
= \lim_{N\to\infty} \frac{\int_{N^{\frac{\epsilon-1}{2}}}^1 y^{b-2}\Lambda(dy)}{\int_{N^{\frac{\epsilon-1}{2}}}^1 y^{2-2}\Lambda(dy)}
=\frac{\lambda_{b,b}}{ \lambda_{2,2}}.
$$
\end{proof}
\subsection{Evolution with recurrent bottlenecks}\label{sec:BN}

In section \ref{sec:generalOffspring} we considered a model where the population was left free to increase over $N$ and get back to the equilibrium immediately. In this section we model the inverse phenomenon, now the population suddenly and drastically decreases and gets back to the equilibrium immediately after this catastrophic event.
Here we adapt  our setting to a generalization of the Wright-Fisher model with recurrent bottlenecks introduced in \cite{CasanovaMiroJegousse2020}. 
The general picture is the following. At every discrete time, the population undergoes a drastic decrease of its size for only one generation and with a small probability. 
The next generation then descends from a finite number of individuals that, as $N\to\infty$, is asymptotically independent of $N$.


More precisely, we consider an AWF model with  family frequencies vector
$$\bm\eta^{(N)}=\bm{\bar\eta}^{(N)}B^{(N)}+\bm{\hat\eta}^{(N)}(1-B^{(N)})$$
where $B^{(N)}$ is a Bernoulli variable that determines whether a bottleneck event occurs.
When a bottleneck does occur, the next generation will descend from a random number of individuals $Y^{(N)}<N$ that is independent of $B^{(N)}$, meaning that $\bar\eta_i^{(N)}=0$ if $i>Y^{(N)}$ (or $\sum_{i=1}^{Y^{(N)}}\bar\eta_i^{(N)}=1$).
We denote by $\bar\nu_k$ the conditional law of the random probability vector $(\bar\eta_1^{(N)}, \dots, \bar\eta_k^{(N)})$ given $Y^{(N)}=k$ for $k\geq1$.
We assume that this conditional law does not depend on $N$. Also, we assume that
the probability that a bottleneck event occurs is given by
$$\P(B^{(N)}=1)=\frac{\sum_{k=1}^{b_N}F(k)}{a_n}$$
and that, when a bottleneck occurs,  there are $k\in[b_N]$ individuals left alive with probability  
$$\P(Y^{(N)}=k)=\frac{F(k)}{\sum_{k=1}^{b_N}F(k)},$$
 where $F$ is a (possibly infinite)  measure on $\N$. Here $(a_N)_{N\ge1}$ and $(b_N)_{N\ge1}$ are two diverging sequences  satisfying
 $\sum_{k=1}^{b_N}F(k)=o(a_n)$ and $b_N=o(N)$.
Intuitively, these two assumptions imply that bottlenecks happen with a small probability and that their intensity reduces significantly the population size. \par
Let us introduce the probability $\bar c_k$ that two randomly chosen individuals have the same parent just after a bottleneck of final size $k$ occurred, this is given by
$$\bar c_{k}
=\E\left[\sum_{i=1}^k\(\bar\eta_i^{(N)}\)^2\given{Y^{(N)}=k} \right].
$$
Also consider the same probability when no bottleneck occurred, that is
$$\hat c_{N}
=\E\left[\sum_{i=1}^N\(\hat\eta_i^{(N)}\)^2\right].
$$
As will be seen in the next result, two types of genealogies can appear in the limit depending on the frequency of the bottleneck events compared to the rescaling time of the AWF model driven only by $\bm{\hat\eta^{(N)}}$ (which would correspond to a scenario without bottleneck events).
In particular, when bottleneck events dominate the coalescence dynamics, the limit genealogies are described by a $\Xi$-coalescent such that, at rate $F(k)$, a merging event is performed according to a paintbox driven by the mass partition $\bar\nu_k$.
Denoting by $\varrho_\rho$ the law of a paintbox according to a random mass partition $\bm\rho$, in this case the coagulation matrix is given by
\begin{equation}\label{RFcoal}
\mat \bar Q^{(n)}(\dpi) \coloneqq\sum_{k=1}^\infty  F(k) \int_{\PS{[0,1]}} \varrho_\rho(\tilde\pi) \bar\nu_k(d\rho).
\end{equation}

\begin{corollary}
Assume that $0<\lim_{N\to\infty}\sum_{k=1}^{b_N} F(k)\bar c_{N,k}<\infty$  and that the AWF model driven by $\bm{\hat\eta^{(N)}}$ is in the domain of attraction of a coalescent with coagulation matrix $\mat Q^{(n)}_{\Xi}$, in the sense of Theorem \ref{th:multWeakConv}.
\\
i)
If $\hat c_Na_N\to0$, then,  as $N\to\infty$ the rescaled genealogy $\stp{\cpNn_{\floor{ta_N}}}$ converges weakly in the Skorohod space $D([0,\infty), \PS{n})$ towards
a coalescent with coagulation matrix $\mat \bar Q^{(n)}$ defined in \eqref{RFcoal} and initial state $\ze_n$.
\\
ii)
If $\hat c_Na_N\to\infty$, then, as $N\to\infty$ the rescaled genealogy $\stp{\cpNn_{\floor{t/\hat c_N}}}$ converges weakly in the Skorohod space $D([0,\infty), \PS{n})$ towards a coalescent with coagulation matrix $\mat Q^{(n)}_{\Xi}$ and initial state $\ze_n$.
\\iii) If $\hat c_Na_N\to \ell>0$, then, as $N\to\infty$ the rescaled genealogy $\stp{\cpNn_{\floor{ta_N}}}$ converges weakly in the Skorohod space $D([0,\infty), \PS{n})$ towards a coalescent with coagulation matrix 
$ \mat \bar Q^{(n)}_{\Xi}+
\ell\mat Q^{(n)}_{\Xi}$.
\end{corollary}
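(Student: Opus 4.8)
The plan is to recognise the recurrent-bottleneck model as an AWF model with weight vector $\bm\eta^{(N)}=B^{(N)}\bm{\bar\eta}^{(N)}+(1-B^{(N)})\bm{\hat\eta}^{(N)}$, to verify the Taylor expansion \eqref{eq:TaylorMassPartition} in each of the three regimes, and then to read off the conclusion from Theorem~\ref{th:multWeakConv}~ii). The starting observation is that, since the coagulation increment in \eqref{eq:multiGenealogyDef} is the paint-box directed by $\bm\eta^{(N)}$ (formula \eqref{PNpaintbox}), and since $\bm\eta^{(N)}$ equals $\bm{\bar\eta}^{(N)}$ or $\bm{\hat\eta}^{(N)}$ according to the value of $B^{(N)}$ (taken independent of the two environment vectors, as in the model's construction), the transition matrix splits as a mixture
\begin{equation*}
\mat P^{(N,n)}(\dpi)=p_N\,\E\[\varrho_{\bm{\bar\eta}^{(N)}}(\dpi)\]+(1-p_N)\,\E\[\varrho_{\bm{\hat\eta}^{(N)}}(\dpi)\],\qquad \dpi\in\PS{n},
\end{equation*}
where $p_N\coloneqq\P(B^{(N)}=1)=a_N^{-1}\sum_{k=1}^{b_N}F(k)\to0$, $\varrho_{\bm\rho}$ is the paint-box law directed by $\bm\rho$, and throughout we track only $\dpi\neq\ze_n$ (the value at $\ze_n$ following by complementation over the finitely many other entries).

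I would then treat the two terms separately. For the bottleneck term, conditioning on $Y^{(N)}=k$ and using that $\bar\nu_k$ does not depend on $N$ gives
\begin{equation*}
p_N\,\E\[\varrho_{\bm{\bar\eta}^{(N)}}(\dpi)\]=\frac{1}{a_N}\sum_{k=1}^{b_N}F(k)\int_{\PS{[0,1]}}\varrho_{\bm\rho}(\dpi)\,\bar\nu_k(d\rho).
\end{equation*}
The key elementary estimate is $\varrho_{\bm\rho}(\dpi)\leq\sum_i\rho_i^2$ for $\dpi\neq\ze_n$ — two elements of a common non-singleton block of $\dpi$ must land in the same sub-interval of the paint-box — so each integral is bounded by the $N$-free quantity $\bar c_k=\int\sum_i\rho_i^2\,\bar\nu_k(d\rho)$; by the hypothesis $\bar C\coloneqq\sum_{k\geq1}F(k)\bar c_k\in(0,\infty)$ the series defining $\mat{\bar{Q}}^{(n)}(\dpi)$ in \eqref{RFcoal} converges, and since $b_N\to\infty$ its partial sums up to $b_N$ converge to it, whence
\begin{equation*}
p_N\,\E\[\varrho_{\bm{\bar\eta}^{(N)}}(\dpi)\]=\frac{1}{a_N}\bigl(\mat{\bar{Q}}^{(n)}(\dpi)+\lo{1}\bigr).
\end{equation*}
Comparing \eqref{RFcoal} with \eqref{eq:QratesFormula} identifies $\mat{\bar{Q}}^{(n)}$ as the coagulation matrix of the $\Xi$-coalescent for the \emph{finite} measure $\Xi(d\rho)=\sum_{k\geq1}F(k)\bigl(\sum_i\rho_i^2\bigr)\bar\nu_k(d\rho)$ (total mass $\bar C$, not charging the zero partition), so that Theorem~\ref{th:multWeakConv} applies with it. For the no-bottleneck term I would simply invoke the hypothesis: that the $\bm{\hat\eta}^{(N)}$-model lies in the domain of attraction of the $\Xi$-coalescent with matrix $\mat Q^{(n)}_{\Xi}$ means precisely $\E[\varrho_{\bm{\hat\eta}^{(N)}}(\dpi)]=\hat c_N\mat Q^{(n)}_{\Xi}(\dpi)+\lo{\hat c_N}$ for $\dpi\neq\ze_n$, with $\hat c_N\to0$; since $p_N\to0$ this yields $(1-p_N)\E[\varrho_{\bm{\hat\eta}^{(N)}}(\dpi)]=\hat c_N\mat Q^{(n)}_{\Xi}(\dpi)+\lo{\hat c_N}$.

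Adding the two contributions gives, for $\dpi\neq\ze_n$,
\begin{equation*}
\mat P^{(N,n)}(\dpi)=\frac{1}{a_N}\mat{\bar{Q}}^{(n)}(\dpi)+\hat c_N\,\mat Q^{(n)}_{\Xi}(\dpi)+\lo{a_N^{-1}+\hat c_N},
\end{equation*}
from which the three cases are immediate. In case~i), $\hat c_Na_N\to0$ forces $\hat c_N=\lo{a_N^{-1}}$, so $\mat P^{(N,n)}(\dpi)=a_N^{-1}\mat{\bar{Q}}^{(n)}(\dpi)+\lo{a_N^{-1}}$, and Theorem~\ref{th:multWeakConv}~ii) with $a_N^{-1}\to0$ in the role of $c_N$ gives the claimed convergence of $\stp{\cpNn_{\lfloor ta_N\rfloor}}$ to the coalescent with matrix $\mat{\bar{Q}}^{(n)}$. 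In case~ii), $\hat c_Na_N\to\infty$ forces $a_N^{-1}=\lo{\hat c_N}$, so $\mat P^{(N,n)}(\dpi)=\hat c_N\mat Q^{(n)}_{\Xi}(\dpi)+\lo{\hat c_N}$, and Theorem~\ref{th:multWeakConv}~ii) with $c_N=\hat c_N$ gives convergence of $\stp{\cpNn_{\lfloor t/\hat c_N\rfloor}}$ to the $\Xi$-coalescent with matrix $\mat Q^{(n)}_{\Xi}$. In case~iii), $\hat c_N\sim\ell a_N^{-1}$, so $\mat P^{(N,n)}(\dpi)=a_N^{-1}\bigl(\mat{\bar{Q}}^{(n)}+\ell\mat Q^{(n)}_{\Xi}\bigr)(\dpi)+\lo{a_N^{-1}}$, and since a sum of coagulation matrices of $\Xi$-coalescents is again one — it corresponds to the sum of the underlying measures on $\PS{[0,1]}$, by linearity of \eqref{eq:QratesFormula} in $\Xi$ — Theorem~\ref{th:multWeakConv}~ii) with $c_N=a_N^{-1}$ gives convergence of $\stp{\cpNn_{\lfloor ta_N\rfloor}}$ to the coalescent with matrix $\mat{\bar{Q}}^{(n)}+\ell\mat Q^{(n)}_{\Xi}$.

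Apart from the mixture decomposition, the argument is essentially bookkeeping with the $\lo{\cdot}$ terms; the one step that genuinely needs (mild) care, and which I expect to be the crux, is checking that $\mat{\bar{Q}}^{(n)}$ — and $\mat{\bar{Q}}^{(n)}+\ell\mat Q^{(n)}_{\Xi}$ in case~iii) — really are coagulation matrices of $\Xi$-coalescents with finite total rates, so that Theorem~\ref{th:multWeakConv} is applicable to them; this is exactly what the inequality $\varrho_{\bm\rho}(\dpi)\leq\sum_i\rho_i^2$ together with the finiteness hypothesis $\sum_k F(k)\bar c_k<\infty$ provide. One formal point worth a line: the sequences $a_N^{-1}$ (resp.\ $\hat c_N$) handed to Theorem~\ref{th:multWeakConv} are not literally the coalescence probability $c_N=\E\[\sum_i(\eta_i^{(N)})^2\]$ of the model, but this is harmless, since that theorem only asks for \emph{some} sequence tending to $0$ for which \eqref{eq:TaylorMassPartition} holds; by the $n=2$ instance of the expansion the genuine $c_N$ is, in each regime, a fixed positive multiple of the sequence used, so the difference amounts only to a constant time-change of the limit, absorbed into the matrix.
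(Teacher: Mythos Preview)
Your proof is correct and follows the same core strategy as the paper: condition on whether a bottleneck occurs (i.e.\ on the value of $B^{(N)}$), identify the asymptotics of each piece, and then invoke Theorem~\ref{th:multWeakConv}. The paper's own argument is in fact much terser---it only computes $c_N$ and asserts the conclusions---whereas you carry out explicitly the steps it leaves implicit: the full Taylor expansion of $\mat P^{(N,n)}$, the verification via $\varrho_{\bm\rho}(\dpi)\leq\sum_i\rho_i^2$ that $\mat{\bar Q}^{(n)}$ is a bona fide $\Xi$-coagulation matrix with finite total rates, and the point that Theorem~\ref{th:multWeakConv} does not require the scaling sequence to be literally the coalescence probability $c_N$ of \eqref{eq:generalcN}.
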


\begin{proof}
Let us first compute $c_N$ (in \eqref{eq:generalcN}) by conditioning on the value of $B^{(N)}$. We have
\begin{align*}
c_N&=\sum_{k=1}^{b_N}\bar c_{N,k}\P( Y^{(N)}=k, B^{(N)}=1)+\hat c_N\P(B^{(N)}=0)\\
&=\frac{\sum_{k=1}^{b_N}F(k)\bar c_{N,k}}{a_N}+\hat c_N(1-\frac{\sum_{k=1}^{b_N}F(k)}{a_N}).
\end{align*}

In case i), we get that
$$c_N\sim\frac{\sum_{k=1}^{b_N}F(k)\bar c_{N,k}}{a_N}$$
and since $0<\lim_{N\to\infty}\sum_{k=1}^{b_N} F(k)\bar c_{N,k}<\infty$, the limit coagulation matrix obtained consists in the paintbox representation obtained in \eqref{RFcoal}.

In case ii), we get that
$$c_N\sim\hat c_N$$
and we are coming back to the AWF model driven by $\bm\hat\eta^{(N)}$, with limit coagulation matrix $\mat Q^{(n)}_{\Xi}$.

Finally, in case iii), we get that
$$c_N\sim\frac{\sum_{k=1}^{b_N}F(k)\bar c_{N,k}}{a_N}+\frac\ell{a_N}$$
and the effects of both coagulation matrices add up in the limit.
\end{proof}

We provide two examples.
\begin{itemize}
\item Wright-Fisher model with Wright-Fisher bottlenecks: The case where $\bm\bar\eta^{(N)}=(\frac1{Y^{(N)}},\dots,\frac1{Y^{(N)}},0,\dots,0)$, $\bm\hat\eta^{(N)}=(\frac1N,\dots,\frac1N)$, and also $a_N=N^\alpha$ for some $\alpha\in(0,1)$ and $F$ is any measure on $\N$ such that $\sum F(k)/k<\infty$, is similar to the Wright-Fisher model with short drastic recurrent bottlenecks studied in \cite{CasanovaMiroJegousse2020}. The limit genealogies are described by the so-called symmetric coalescent. Its coagulation matrix is given by \eqref{RFcoal} where $\bar\nu_k=\delta_{(\frac1k,\dots,\frac1k)}$.
\item Wright-Fisher model with general Dirichlet bottlenecks:  A  normalized random partition with $K$ components is a partition of the form $(\frac {W_1}{S_K},\dots, \frac {W_K}{S_K})$ where $(W_1, W_2,\dots)$ is a family of i.i.d. random variables with finite variance and $S_{K}=\sum_{i=1}^KW_i$. The model in which $\bm\bar\eta^{(N)}$ is a normalized random partition with $Y^{(N)}$ components, $\bm\hat\eta^{(N)}=(\frac1N,\dots,\frac1N)$, and also $a_N=N^\alpha$ for some $\alpha\in(0,1)$ and $F$ is any measure on $\N$ such that $\sum F(k)/k<\infty$, provides an illustrative example where the limit genealogies are described by the Dirichlet coalescents introduced  in \cite{CasanovaMiroJegousseSchertzer2022}.
\end{itemize}

\section{An example for selective evolution}\label{sec:multPDcase}
In this section we study the case in which the family frequencies of the AWF model 
are constructed from a size-biased pick
$(\tilde V_1, \tilde V_2,\dots)$ of a Poisson-Dirichlet (PD) random mass partition of parameters 
$(\alpha,\theta)$, $0<\alpha<1, \theta>-\alpha$ \cite{PitmanYor97}, by setting, for $1\leq i \leq N$,
\begin{equation}\label{eq:PDdmpNdef}
\rpvN_i = \frac{\tilde V_{i}^\ppwr}{\sum_{j=1}^N \tilde V_j^\ppwr },
\end{equation}
where $\ppwr\in\mathbb{R}$ is a parameter of the model.\par
The case $\ppwr=\alpha$ of the following theorem was proved  in \cite{CortinesMallein2017}.
Here we extend their results by following their main heuristics  but using different technical
arguments such as  applying  Propositions \ref{prop:multSPlambda} and \ref{prop:multiKingman2}
instead of their Lemmas 3.1 and 3.2, thus avoiding having to deal with the largest family frequency among $\rpv$ directly.
To ease notations we write $$u_N=\sum_{i=1}^N i^{-\gamma/\alpha}$$
in the whole section.

\begin{theorem}\label{th:PDmultGenealogy}
Assume $\alpha/2<\ppwr\leq \alpha$ and set
$
L_N = \ell_{\alpha,\theta,\ppwr}u_N^{1+\frac{\theta}{\alpha}}$  where 
$$
\ell_{\alpha,\theta,\ppwr}^{-1}=
\frac{\alpha}{\ppwr}\frac{\Gamma(1-\alpha)^{\frac{\theta}{\alpha}}}{\Gamma(1+\ppwr-\alpha)^{1+\frac{\theta}{\alpha}}}
    \frac{\Gamma\(\frac{\alpha+\theta}{\alpha}(1-\frac{\ppwr}{\alpha})+1\)}
         {\Gamma\(\(\alpha+\theta\)\(1-\frac{\ppwr}{\alpha}\) +1\)}
   \frac{\Gamma(1+\theta) \Gamma\(1-\frac{\theta}{\alpha}\)}
        {\Betaf\(1-\frac{\theta}{\alpha},1+\frac{\theta}{\alpha}\)}.
$$
Let $\stp{\cpNn_{\floor{t/c_N}}}$ be the genealogy of the AWF model with family frequencies as in \eqref{eq:PDdmpNdef}.\\
i) If $\theta\in(-\alpha,\alpha)$, then $c_N\sim \(1-\frac{\theta}{\alpha}\)/L_N$ and $\stp{\cpNn_{\floor{t/c_N}}}$ 
converges weakly in the Skorohod topology for $D([0,\infty), \PS{n})$ to the Beta$\(1-\frac{\theta}{\alpha},1+\frac{\theta}{\alpha}\)$-coalescent, as $N\to\infty$.\\
ii) If $\theta=\alpha$, then $c_N=\bO{u_N^{-2}\log\(u_N\)}$, 
whereas if $\theta>\alpha$, then $c_N=\bO{u_N^{-2}}$. In both cases $\stp{\cpNn_{\floor{t/c_N}}}$ converges weakly to Kingman's coalescent, as $N\to\infty$.
\end{theorem}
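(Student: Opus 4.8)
The plan is to verify the hypotheses of Propositions \ref{prop:multSPlambda} and \ref{prop:multiKingman2} by obtaining sharp asymptotics for the moments $\E[(\rpvN_i)^b]$ of the family frequencies \eqref{eq:PDdmpNdef}. The starting point is the well-known description of a size-biased pick from a PD$(\alpha,\theta)$ mass partition: one may write $\tilde V_i = \tilde W_i \prod_{j<i}(1-\tilde W_j)$ where the $\tilde W_j$ are independent with $\tilde W_j\sim\mathrm{Beta}(1-\alpha,\theta+j\alpha)$ (the GEM/stick-breaking representation, see \cite{PitmanYor97}). This makes the $\tilde V_i$ explicit enough to compute, via Beta-integral identities, the asymptotics of $\E[\tilde V_i^{\ppwr b}]$ as $i\to\infty$, which decay polynomially in $i$. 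First I would establish that the normalizing sum $\sum_{j=1}^N\tilde V_j^\ppwr$ concentrates, after suitable scaling, so that $\rpvN_i \approx \tilde V_i^\ppwr / (\text{const}\cdot u_N^{1+\theta/\alpha})$ in a sense strong enough to transfer moment asymptotics; this is where the quantity $u_N=\sum_{i=1}^N i^{-\gamma/\alpha}$ and the constant $\ell_{\alpha,\theta,\ppwr}$ enter, pinning down $L_N$ and hence $c_N = \E[\sum_i (\rpvN_i)^2]$.

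For part i), with $\theta\in(-\alpha,\alpha)$, the exponent $\gamma/\alpha$ lies in $(1/2,1)$ (using $\alpha/2<\ppwr\le\alpha$), so $u_N\to\infty$ like $N^{1-\gamma/\alpha}$ and the tail sum $\sum_{i} i^{-\gamma/\alpha \cdot b}$ for $b\ge 2$ converges; combining this with the concentration of the denominator I would show that for the \emph{size-biased} index $i=1$ one has $\E[(\rpvN_1)^b]\sim c_N\,\lambda_{b,b}/\lambda_{2,2}$ where $\lambda$ is computed from the Beta$(1-\theta/\alpha,1+\theta/\alpha)$ measure — the Beta-integral $\lambda_{b,b}=\int_0^1 p^{b-2}\mathrm{Beta}(1-\theta/\alpha,1+\theta/\alpha)(dp)$ reduces to a ratio of Gamma functions that should match the $i$-tail computation. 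This is exactly hypothesis \eqref{critsimple}, so Proposition \ref{prop:multSPlambda} gives convergence to the Beta$(1-\theta/\alpha,1+\theta/\alpha)$-coalescent. For part ii), when $\theta\ge\alpha$ we have $\gamma/\alpha = \ppwr/\alpha \le 1$ but now the relevant sums behave differently: I would check that $\E[\sum_{i\ge 2}(\rpvN_i)^3]=\lo{c_N}$ and that $\E[(\rpvN_1)^\beta]=\lo{c_N}$ for some $\beta>2$ (taking $\beta$ slightly above $2$ and using that the largest size-biased term is not heavy enough when $\theta\ge\alpha$), which are precisely the hypotheses of Proposition \ref{prop:multiKingman2}; the logarithmic correction $u_N^{-2}\log u_N$ at $\theta=\alpha$ arises because the sum $\sum_i i^{-2\gamma/\alpha}=\sum_i i^{-2}$ is replaced, at the boundary, by a divergent harmonic-type sum truncated at $N$ when one tracks the exact order of $c_N=\E[\sum_i(\rpvN_i)^2]$.

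The main obstacle I anticipate is the concentration/transfer step: replacing the random denominator $\sum_{j=1}^N\tilde V_j^\ppwr$ by its deterministic equivalent $\text{const}\cdot u_N^{1+\theta/\alpha}$ inside the expectations $\E[(\tilde V_i^\ppwr/\sum_j\tilde V_j^\ppwr)^b]$. Because the summands $\tilde V_j^\ppwr$ are not i.i.d. and have polynomially decaying means, one needs a careful second-moment estimate (or a truncation argument separating the "bulk" small-index terms from the tail) to control the fluctuations of the denominator and show they do not affect the leading-order asymptotics of $c_N$ or of the higher moments. Getting the precise constant $\ell_{\alpha,\theta,\ppwr}$ — which involves several Gamma and Beta factors — is the bookkeeping price of doing this carefully; the heuristic identifying it already appears in \cite{CortinesMallein2017} for $\ppwr=\alpha$, and the extension to general $\ppwr\in(\alpha/2,\alpha]$ should follow the same lines with the exponent $\gamma/\alpha$ in place of $1$ throughout.
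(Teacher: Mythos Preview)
Your high-level plan---verify the hypotheses of Propositions \ref{prop:multSPlambda} and \ref{prop:multiKingman2} via moment asymptotics of $\rpvN_1$, starting from the GEM stick-breaking representation---is the same as the paper's. But the concentration step you identify as ``the main obstacle'' is not merely delicate: as stated it is false, and this forces a genuinely different argument.

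The normalizing sum $\zeta_{N,\gamma}=\sum_{j=1}^N\tilde V_j^\gamma$ does \emph{not} concentrate around a deterministic multiple of $u_N^{1+\theta/\alpha}$. Two separate errors are hidden here. First, the correct scaling is $u_N$, not $u_N^{1+\theta/\alpha}$: writing $\prod_{j<i}(1-\tilde W_j)^\gamma=e^{-\gamma\mu_{i-1}}e^{\gamma S_{i-1}}$ with $\mu_i$ the centering and $S_i$ a martingale, the paper shows (Proposition \ref{prop:PDnormsumAsmp}) that $\zeta_{N,\gamma}/u_N\to C\,e^{\gamma S_\infty}$ a.s.\ and in $\Lp{1}$. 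Second, this limit is \emph{random}, so you cannot ``replace the random denominator by its deterministic equivalent'' inside $\E[(\tilde V_1^\gamma/\zeta_{N,\gamma})^b]$; the fluctuations of $\zeta_{N,\gamma}$ remain of the same order as its mean. A second-moment or truncation argument will therefore not give you $\E[(\rpvN_1)^b]\sim \E[\tilde V_1^{\gamma b}]/(\text{const}\cdot u_N)^b$.

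The paper circumvents the numerator--denominator dependence by a different mechanism: the PD change-of-parameter identity (Lemma \ref{le:PDsummary}) says that $(\tilde V_i/(1-\tilde V_1))_{i\geq 2}$ is a size-biased pick from PD$(\alpha,\theta+\alpha)$, \emph{independent} of $\tilde V_1$. This lets one write
\[
\E_{\alpha,\theta}\bigl[(\rpvN_1)^\delta\bigr]
=\E_{\alpha,\theta+\alpha}\!\left[\int_0^1 \frac{y^{\gamma\delta}}{(y^\gamma+(1-y)^\gamma\zeta_{N-1,\gamma})^\delta}\,
\frac{y^{-\alpha}(1-y)^{\theta+\alpha-1}}{\Betaf(1-\alpha,\theta+\alpha)}\,dy\right],
\]
after which a change of variable $u=(1-y)^\gamma\zeta_{N-1,\gamma}$ isolates a factor $\zeta_{N-1,\gamma}^{-(1+\theta/\alpha)}$---this is where the exponent $1+\theta/\alpha$ actually enters, not in the scaling of $\zeta_{N,\gamma}$ itself. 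Passing to the limit then requires the $\Lp{1}$ convergence of $(\zeta_{N,\gamma}/u_N)^{-\eta}$ for $\eta\gamma<\theta+\alpha$, which the paper obtains via a Jensen-type uniform integrability bound rather than a concentration estimate. The boundary case $\theta=\alpha$ needs a separate analysis of the integral $I_N$ to extract the extra $\log u_N$.
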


Interestingly, the choice of $\ppwr$ does not affect the shape of the limit genealogy, but only the time rescaling. As explained in the following section, the above Theorem \ref{th:PDmultGenealogy} will provide a characterization of the limit genealogy of a population model that incorporates noisy natural selection (see the upcoming Theorem \ref{th:genealogiesExpModel}).

{\subsection{Motivation: exponential models}\label{sec:exponentialModel}
Here we briefly explain how Theorem \ref{th:PDmultGenealogy} describes the limit genealogy of a slightly more general
{exponential model (EM)} than those studied in \cite{CortinesMallein2017,CortinesMallein2018}, 
which is in fact a simple combination of them both. 
In order to avoid plain repetitions of their arguments we will heavily refer to their
results and proofs, and only provide the main heuristics therein.\par
We now describe the $(N,\beta,\kappa)$-EM, a generalization of the $(N,\beta)$-EM described in \cite{CortinesMallein2017}. 
Consider a population of particles 
positioned on the real line that evolve through discrete generations $t\in\N$.  
Every generation is of size $N$ and is constructed from
the previous generation through {branching} and  {selection} steps. The positions of the particles, say 
 $(X_1^{(N)}(t),\dots,X_N^{(N)}(t))$, give individuals' fitness levels: higher position
gives a greater probability of having more descendants in the next generation. 
Let $\kappa>0$ be a parameter of the model. During the branching step an individual at position $x$  will 
be replaced, independently from the other particles and from the previous generations, 
by a countable number of children whose positions are given by a PPP of intensity $e^{-(s-\kappa x)}\dif{s}$.
Let $\beta>1$ be another parameter of the model. During the selection step, $N$ of the newly produced particles 
are sampled without replacement, and with the probability of picking a child at position
$x$ being proportional to $e^{\beta x}$. Proposition 1.3 in \cite{CortinesMallein2017} ensures that the model is well defined 
(i.e. the selection probabilities
are well defined) precisely when $\beta>1$. The case $\kappa=1$ corresponds to the $(N,\beta)$-EM in which
children particles are centered around the position of the parent, as opposed to the general case in which children are
positioned around the scaled position of the parent $\kappa x$. On the other hand, if instead of the noisy selection step
just described we performed a ``perfect'' selection step, 
i.e. if we selected the $N$ rightmost children (which corresponds to setting $\beta=\infty$), then we recover the 
exactly solvable model studied in \cite{CortinesMallein2018}. Another variation of the exponential model is also studied by
 \cite{SchertzerWences2023}, in which the selection step is performed on the subset of the 
$\lfloor N^\chi \rfloor$, $\chi\geq1$, rightmost children produced during the branching step. In this case the strength
of selection $\beta$ may range in $[0,\infty)$, finding both discrete and
continuous time coalescent processes in the limit genealogy, depending on the choice of $\chi$ and $\beta$.\par

In the following proposition we recapitulate without formal proofs some of the results in 
\cite{CortinesMallein2017} which will establish a clear
connection between the genealogy of the $(N,\beta,\kappa)$-EM and the AWF model of Theorem \ref{th:PDmultGenealogy}.

\begin{proposition}\label{prop:EMasAWF}
In the $(N,\beta, \kappa)$-EM, 
a particle produced during the branching step of generation $t$
is a child of the particle at position $X_i^{(N)}(t)$ with probability, conditional on $(X_1^{(N)}(t),\dots,X_N^{(N)}(t))$ and independently from its own position and from the rest of its siblings, given
by
$
\frac{e^{\kappa X^{(N)}_i(t)}}{\sum_{j=1}^N e^{\kappa X^{(N)}_j(t)}}.
$
Furthermore, the sequence of random vectors 
$\(\frac{e^{\kappa X^{(N)}_1(t)}}{\sum_{j=1}^N e^{\kappa X^{(N)}_j(t)}}, \dots, \frac{e^{\kappa X^{(N)}_N(t)}}{\sum_{j=1}^N e^{\kappa X^{(N)}_j(t)}}\)_{t\in\N}$ is independent and identically distributed as 
$
\(\frac{\tilde V_{1}^{\kappa/\beta}}{\sum_{j=1}^N \tilde V_j^{\kappa/\beta} },\dots,\frac{\tilde V_{N}^{\kappa/\beta}}{\sum_{j=1}^N \tilde V_j^{\kappa/\beta} } \)
$
where $(\tilde V_1,\tilde V_2, \cdots)$ is a size-biased pick of a PD$(1/\beta,0)$ random variable. 
\end{proposition}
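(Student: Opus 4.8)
The plan is to read off both assertions from elementary Poisson point process identities, essentially reproducing the argument for the $(N,\beta)$-EM in \cite{CortinesMallein2017} while keeping the scaling parameter $\kappa$ explicit.

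For the parent-choice statement, first I would condition on the positions $\(X_1^{(N)}(t),\dots,X_N^{(N)}(t)\)$ and use that the children of the $i$-th particle form a PPP of intensity $e^{-(s-\kappa X_i^{(N)}(t))}\dif{s}=e^{\kappa X_i^{(N)}(t)}e^{-s}\dif{s}$, independently over $i$ and over the past; hence the whole cloud of children produced in the branching step is a PPP of intensity $\bigl(\sum_{j=1}^N e^{\kappa X_j^{(N)}(t)}\bigr)e^{-s}\dif{s}$. The Poisson colouring theorem then gives that, conditionally on this cloud, each child is independently labelled with the index of its parent, this label being $i$ with probability equal to the ratio of intensity densities at the child's location; the factors $e^{-s}$ cancel, leaving $e^{\kappa X_i^{(N)}(t)}/\sum_{j=1}^N e^{\kappa X_j^{(N)}(t)}$, which does not depend on the child's position $s$. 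That yields at once the announced probability and the independence from the child's own position and from its siblings.

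For the distributional identity I would represent the superposed children cloud by its ranked atoms $s_1>s_2>\cdots$, which satisfy $s_k=\log W_t-\log\Gamma_k$ with $W_t\coloneqq\sum_{j=1}^N e^{\kappa X_j^{(N)}(t)}$ and $(\Gamma_k)_{k\ge1}$ the partial sums of i.i.d. standard exponential variables, independent of the past. The selection step retains $N$ atoms sampled successively without replacement with weights proportional to $e^{\beta s_k}=W_t^{\beta}\Gamma_k^{-\beta}$; the factor $W_t^{\beta}$ cancels in the sampling weights, so the retained ranks $k_1,\dots,k_N$ are a functional of $(\Gamma_k)_k$ alone, and the new positions are, enumerated in the order sampled, $X_j^{(N)}(t+1)=\log W_t-\log\Gamma_{k_j}$. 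Consequently $e^{\kappa X_j^{(N)}(t+1)}/\sum_{l=1}^N e^{\kappa X_l^{(N)}(t+1)}=\Gamma_{k_j}^{-\kappa}/\sum_{l=1}^N\Gamma_{k_l}^{-\kappa}$ depends on $(\Gamma_k)_k$ only; since a fresh independent copy of $(\Gamma_k)_k$ is produced at each branching step, the i.i.d. claim across $t$ follows. To identify the law I would invoke that, because $\beta>1$, the points $\bigl(\Gamma_k^{-\beta}\bigr)_k$ are almost surely summable and, being the image of a homogeneous Poisson process on $(0,\infty)$ under $x\mapsto x^{-\beta}$, their normalisation $\bigl(V_k\bigr)_k\coloneqq\bigl(\Gamma_k^{-\beta}/\sum_l\Gamma_l^{-\beta}\bigr)_k$ is a PD$(1/\beta,0)$ mass partition \cite{PitmanYor97}; sampling ranks without replacement with weights proportional to $\Gamma_k^{-\beta}\propto V_k$ is then precisely a size-biased pick, so $(V_{k_1},\dots,V_{k_N})\overset{d}{=}(\tilde V_1,\dots,\tilde V_N)$, and writing $\Gamma_{k_j}^{-\kappa}=\bigl(\Gamma_{k_j}^{-\beta}\bigr)^{\kappa/\beta}$ and cancelling the common factor $\bigl(\sum_l\Gamma_l^{-\beta}\bigr)^{\kappa/\beta}$ turns $\Gamma_{k_j}^{-\kappa}/\sum_l\Gamma_{k_l}^{-\kappa}$ into $V_{k_j}^{\kappa/\beta}/\sum_l V_{k_l}^{\kappa/\beta}$, which is the asserted identity.

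The Poisson bookkeeping above is routine; the point I expect to require the most care is the two places where ``sampling without replacement with probability proportional to prescribed weights'' intervenes. One must fix the convention -- successive, i.e. size-biased, sampling -- under which both the selection step of the model and the notion of a size-biased pick of a PD$(1/\beta,0)$ partition are defined, and check well-posedness, namely $\sum_k\Gamma_k^{-\beta}<\infty$ almost surely; this is exactly where the hypothesis $\beta>1$ enters and is the content of Proposition 1.3 in \cite{CortinesMallein2017}. Once this convention is pinned down, the only substantive input is the classical identification of the normalised points $\bigl(\Gamma_k^{-\beta}\bigr)_k$, and of their size-biased permutation, with the Poisson--Dirichlet law \cite{PitmanYor97}.
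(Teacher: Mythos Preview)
Your proposal is correct and follows essentially the same approach as the paper's sketch: PPP superposition/colouring for the parent-choice probability, followed by the shift-invariance of the child cloud to decouple generations and the Pitman--Yor identification of the normalised $\beta$-powers with PD$(1/\beta,0)$. Your use of the explicit $\Gamma_k$-representation (ranked points of a unit-rate Poisson process) is merely a change of coordinates from the paper's atoms $z_k$ of a PPP with intensity $e^{-s}\dif s$ (via $z_k=-\log\Gamma_k$), and keeping $\kappa$ explicit rather than setting it to $1$ is cosmetic.
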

\begin{proof}[Sketch of the proof ]
This follows from Lemma 1.6 and Proposition 4.1 in \cite{CortinesMallein2017}; see also the proof of Theorem 1.2 therein. 
Here we only present the
main heuristics. 
To ease the notation, let us set $\kappa=1$.

The first assertion follows from well-known identities of PPPs. Indeed, 
by superimposing $N$ independent PPPs of respective intensities 
$e^{-(s-X^{(N)}_i(t))}ds, 1\leq i\leq N,$ one can produce a single (joint) PPP of intensity
$e^{-(s-X^{(N)}_{eq}(t))}ds$ where $X^{(N)}_{eq}(t)=\log\(\sum_{i=1}^N e^{X^{(N)}_i(t)}\)$. Then, a 
particle at position $x$ of the joint PPP happens to be a particle of the $i$th PPP,
i.e. is a daughter of the particle at position $X^{(N)}_i(t)$,
with probability 
$$
\frac{e^{-(x-X^{(N)}_i(t))}}{\sum_{j=1}^N e^{-(x-X^{(N)}_j(t))}}= \frac{e^{X^{(N)}_i(t)}}{\sum_{j=1}^N e^{X^{(N)}_j(t)}}.
$$

The i.i.d. nature of the sequence $\(\frac{e^{X^{(N)}_1(t)}}{\sum_{j=1}^N e^{X^{(N)}_j(t)}}, \dots, \frac{e^{X^{(N)}_N(t)}}{\sum_{j=1}^N e^{X^{(N)}_j(t)}}\)_{t\in\N}$
follows from observing that the positions of the children produced during the branching step of generation $t$ are distributed like 
$X^{(N)}_{eq}(t)+z_i, i\ge1$ where $(z_1,z_2,\dots)$ are the atoms of a PPP of intensity $e^{-s}ds$, arranged in decreasing order, and this point process
is independent of all the previous generations.  
Hence, we can write the sampling probabilities of the respective selection step (to produce generation $t+1$) as
$$
\( \frac{e^{\beta X^{(N)}_{eq}(t) + \beta z_i}}{\sum_{{j\ge1}} e^{\beta X^{(N)}_{eq}(t) + \beta z_{j}}}  \)_{i\in\N}
=\( \frac{e^{\beta z_i}}{\sum_{{j\ge1}} e^{ \beta z_{j}}}  \)_{i\in\N}.
$$
Observe that the latter does not depend on $(X^{(N)}_1(t),\dots,X^{(N)}_N(t))$ and is PD($1/\beta,0$)-distributed, according to the mapping theorem \cite{Kingman1992} and Proposition 10 in \cite{PitmanYor97}.
Furthermore, if $\(Z_1,\dots, Z_N\)$ is a sample without replacement from $(z_1,z_2, \dots)$
with probabilities given by the r.h.s. above, then we can write the family frequencies of 
generation $t+1$ as, for $1\le i\le N$,
$$
\frac{e^{X^{(N)}_i(t+1)}}{\sum_{j=1}^N e^{X^{(N)}_j(t+1)}}
\overset{d}{=}\frac{e^{X^{(N)}_{eq}(t)+Z_i}}{\sum_{j=1}^N e^{X^{(N)}_{eq}(t)+Z_j}}
=\frac{e^{Z_i}}{\sum_{j=1}^N e^{Z_j}}
\overset{d}{=}\frac{\tilde V_{i}^{1/\beta}}{\sum_{j=1}^N \tilde V_j^{1/\beta} } .
$$ 
The r.h.s. above is independent of both $(X^{(N)}_1(t),\dots,X^{(N)}_N(t))$ and $t$, and thus i.i.d over $t$.
\end{proof}
As a straightforward consequence of point i) of Theorem \ref{th:PDmultGenealogy} with $\alpha=1/\beta$, $\theta=0$, and 
$\ppwr=\kappa / \beta$, we obtain the following result.
\begin{theorem}\label{th:genealogiesExpModel}
Consider a $(N,\beta,\kappa)$-EM with $\beta>1$ and $\kappa\in(1/2,1]$.
Then, as $N\to\infty$,
$
c_N\sum_{i=1}^N i^{-\kappa}\to
\frac{\Gamma\(2-\kappa\)}{\kappa \Gamma\(1-\frac{1-\kappa}{\beta}\)
    \Gamma\(1+\frac{1+\kappa}{\beta}\)}
$
and the rescaled genealogical process $\stp{\cpNn_{\floor{t/c_N}}}$ 
converges weakly in the Skorohod topology for $D([0,\infty), \PS{n})$ to the Bolthausen-Sznitman coalescent.
\end{theorem}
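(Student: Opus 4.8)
The plan is to obtain the statement as a corollary of part~i) of Theorem~\ref{th:PDmultGenealogy}, the bridge being Proposition~\ref{prop:EMasAWF}. First I would invoke that proposition: it shows that in the $(N,\beta,\kappa)$-EM each child produced at a branching step of generation $t$ chooses its parent independently of its own position and of its siblings, the particle at position $X^{(N)}_i(t)$ being chosen with probability $e^{\kappa X^{(N)}_i(t)}/\sum_{j=1}^N e^{\kappa X^{(N)}_j(t)}$, and that the sequence of these weight vectors is i.i.d.\ in $t$ and distributed as the family-frequency vector $\rpv$ of~\eqref{eq:PDdmpNdef} with $\ppwr=\kappa/\beta$ and $(\tilde V_1,\tilde V_2,\dots)$ a size-biased pick of a PD$(1/\beta,0)$ mass partition. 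This is precisely the static-environment and non-heritability structure of Section~\ref{sec:AWF} (children choose parents in an exchangeable way, within an i.i.d.\ environment), so the genealogy of the EM coincides in distribution with the coalescent process $\stp{\cpNn_t}$ of the AWF model driven by this $\rpv$; as in Section~\ref{sec:AC} the time horizon may then be sent to infinity.

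With this identification in hand, I would read off the dictionary with Theorem~\ref{th:PDmultGenealogy}: here $\alpha=1/\beta$, $\theta=0$ and $\ppwr=\kappa/\beta$. The hypotheses of part~i) are immediate: $0<\alpha<1$ and $\theta>-\alpha$ because $\beta>1$; the range $\alpha/2<\ppwr\le\alpha$ becomes $1/2<\kappa\le1$, which is assumed; and $\theta=0\in(-\alpha,\alpha)$, so it is part~i) — not part~ii) — that applies. It therefore yields weak convergence of $\stp{\cpNn_{\floor{t/c_N}}}$ in $D([0,\infty),\PS{n})$ to the Beta$(1-\theta/\alpha,\,1+\theta/\alpha)$-coalescent, which at $\theta=0$ is the Beta$(1,1)$-coalescent, i.e.\ the Bolthausen--Sznitman coalescent. (Recognizing Beta$(1,1)$ as the Bolthausen--Sznitman coalescent is the one genuinely non-mechanical step.)

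It remains to pin down the time constant. Part~i) of Theorem~\ref{th:PDmultGenealogy} also asserts $c_N\sim(1-\theta/\alpha)/L_N$ with $L_N=\ell_{\alpha,\theta,\ppwr}\,u_N^{1+\theta/\alpha}$ and $u_N=\sum_{i=1}^N i^{-\ppwr/\alpha}$. At $\theta=0$ one has $\ppwr/\alpha=\kappa$, hence $u_N=\sum_{i=1}^N i^{-\kappa}$ and $c_N\sim 1/(\ell_{\alpha,0,\ppwr}\,u_N)$, so that $c_N\sum_{i=1}^N i^{-\kappa}\to\ell_{\alpha,0,\ppwr}^{-1}$. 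Evaluating the displayed expression for $\ell_{\alpha,\theta,\ppwr}^{-1}$ at $\theta=0$ is routine: there $\Gamma(1-\alpha)^{\theta/\alpha}=1$, the exponent $1+\theta/\alpha$ equals $1$, $(\alpha+\theta)/\alpha=1$, $\Gamma(1+\theta)=\Gamma(1-\theta/\alpha)=1$ and $\Betaf(1,1)=1$, so the formula collapses to
\[
\ell_{\alpha,0,\ppwr}^{-1}=\frac{\alpha}{\ppwr}\,\frac{\Gamma(2-\ppwr/\alpha)}{\Gamma(1+\ppwr-\alpha)\,\Gamma(1+\alpha-\ppwr)};
\]
substituting $\alpha=1/\beta$, $\ppwr=\kappa/\beta$ (and writing $\Gamma(1+(\kappa-1)/\beta)=\Gamma(1-(1-\kappa)/\beta)$) then produces the announced limit.

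The step one would normally expect to be the obstacle is, in fact, absent here: once Proposition~\ref{prop:EMasAWF} and Theorem~\ref{th:PDmultGenealogy} are granted, the proof is bookkeeping. All the analytic content — the asymptotics as $N\to\infty$ of $\E\big[(\rpvN_1)^b\big]$ for size-biased powers of a PD mass partition, and the check via Propositions~\ref{prop:multSPlambda} and~\ref{prop:multiKingman2} that this puts the model in the domain of attraction of the relevant $\Lambda$-coalescent — lives inside the proof of Theorem~\ref{th:PDmultGenealogy}. The only things to watch in the present argument are that the Poisson-point-process identities behind Proposition~\ref{prop:EMasAWF} genuinely recast the EM as an AWF model (in particular the independence of a child's parent from its own position, which is what makes the coagulation increments exchangeable and i.i.d.), and the constant-chasing of the previous paragraph.
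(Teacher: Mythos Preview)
Your approach is exactly the paper's: the authors present the theorem as ``a straightforward consequence of point~i) of Theorem~\ref{th:PDmultGenealogy} with $\alpha=1/\beta$, $\theta=0$, and $\ppwr=\kappa/\beta$'', via the identification of Proposition~\ref{prop:EMasAWF}; you carry out precisely this substitution and verify the hypotheses.

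One small caveat on the constant: your reduction gives
\[
\ell_{\alpha,0,\ppwr}^{-1}=\frac{\Gamma(2-\kappa)}{\kappa\,\Gamma\!\bigl(1-\tfrac{1-\kappa}{\beta}\bigr)\,\Gamma\!\bigl(1+\tfrac{1-\kappa}{\beta}\bigr)},
\]
with $1+\tfrac{1-\kappa}{\beta}$ in the second Gamma factor, whereas the displayed statement has $1+\tfrac{1+\kappa}{\beta}$. Your computation from the formula for $\ell_{\alpha,\theta,\ppwr}^{-1}$ is correct, so this discrepancy is a typo in the stated constant rather than an error in your argument; your sentence ``then produces the announced limit'' should be qualified accordingly.
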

Comparing this result with Theorem 1.1 in \cite{CortinesMallein2017} we see that  even in the range $\kappa\in (1/2,1)$,  and in contrast to their model in which other Beta-coalescents appear in the limit, in our case the limit genealogy is always described by the Botlhausen-Sznitman coalescent. Also, in contrast to many
other examples in the literature, our convergence to the Bolthausen-Sznitman coalescent may occur at a time scale other than logarithmic on $N$ depending on the choice of $\kappa$.

\subsection{Martingales associated to the Poisson-Dirichlet distribution}
Before proving Theorem \ref{th:PDmultGenealogy} we develop some general results on the PD($\alpha,\theta$) distribution.
We provide an understanding of the asymptotic behavior of the normalizing sum appearing in \eqref{eq:PDdmpNdef}, mainly 
$$\zeta_{N,\gamma}\coloneqq \sum_{i=1}^N \tilde V_i^\gamma.$$
For this we recall the well-known stick-breaking construction of PD$(\alpha,\theta)$ size-biased picks.
The sequence $(\tilde V_1,\tilde V_2,\dots)$ may be constructed from a collection $(Y_1, Y_2, \dots)$ of independent random variables
where $Y_i$ is Beta($1-\alpha, \theta + i\alpha$) distributed, by setting
$$
\tilde V_1 = Y_1, \text{ and }\tilde V_i = (1-Y_1)\dots(1-Y_{i-1})Y_i.
$$ 
This can be found in Proposition 2 of \cite{PitmanYor97}.

We now write $\zeta_{N,\gamma}$ in terms of the sequence
$(Y_1, Y_2, \dots)$. For this observe that 
\begin{equation}\label{eq:PDprodSN}
\prod_{i=1}^{N}(1-Y_i)^\gamma = e^{-\gamma\mu_N}e^{\gamma S_N}
\end{equation}
where 
$S_N= \mu_N+\sum_{i=1}^N \log(1-Y_i)
$ defines a martingale when $\mu_N= \sum_{i=1}^N -\E\[\log(1-Y_i)\]$. 
We also define the martingale
\begin{align*}
M_{N,\gamma} &\coloneqq \sum_{i=1}^N \(i^{\gamma-1} Y_i^\gamma  - \E\[i^{\gamma-1}Y_i^{\gamma}\]\).
\end{align*}
With this, we have 
\begin{align}
\zeta_{N,\gamma} &= \sum_{i=1}^N Y_i^\gamma e^{-\gamma\mu_{i-1}}e^{\gamma S_{i-1}}\nonumber\\
         &= \sum_{i=1}^N \(M_{i,\gamma} - M_{i-1,\gamma} + \E\[i^{\gamma-1}Y_i^{\gamma}\]\)i^{1-\gamma}e^{-\gamma\mu_{i-1}}
            e^{\gamma S_{i-1}}\nonumber\\
         &= \overline{M}_{N,\gamma} + \Sigma_{N,\gamma}  \label{eq:PDSNident}
\end{align}
where $\overline{M}_{N,\gamma}$ is the martingale
$$
\overline{M}_{N,\gamma} \coloneqq  \sum_{i=1}^N (M_{i,\gamma} - M_{i-1,\gamma}) 
                         i^{1-\gamma}e^{-\gamma\mu_{i-1}}e^{\gamma S_{i-1}}
$$
and $\Sigma_{N,\gamma}$ is the sum
$$
\Sigma_{N,\gamma}\coloneqq \sum_{i=1}^N 
\frac{\Betaf(1+\gamma-\alpha,\theta+i\alpha)}{\Betaf(1-\alpha,\theta+i\alpha)}e^{-\gamma\mu_{i-1}}e^{\gamma S_{i-1}}
$$
since $\E\[Y_i^{\gamma}\]={\Betaf(1+\gamma-\alpha,\theta+i\alpha)}/{\Betaf(1-\alpha,\theta+i\alpha)}$.


We now study the product $\prod_{i=1}^N (1-Y_i)^\gamma$ with the help of the martingale $S_N$ and equation \eqref{eq:PDprodSN}.
The following lemma is an adaptation of Lemma 2.4 of \cite{CortinesMallein2017} into our setting.
Let $\psi(z)=\der{}{z}\log \Gamma(z)$ be the digamma function, and for $a>-1$ let $\EMc_a$ be the 
constant such that $-\log(N) + \sum_{i=1}^N (a + i)^{-1}=\EMc_a + \lo{1}$,
e.g. $\EMc\equiv \EMc_0$ is the Euler-Mascheroni constant.
\begin{lemma}\label{le:PDasympSummands}
 There exists a random variable $S_\infty$ such that
$$
\lim_{N\to\infty} e^{\gamma S_N} = e^{\gamma S_\infty}
$$
almost surely. Furthermore, letting $K_{\alpha,\theta}=\exp\{\psi(\theta+1)-\frac{1}{\alpha}\EMc_{\theta/\alpha}\}$,
the convergence also holds in $\Lp{1}$ whenever $\gamma > -(\theta + \alpha)$ and, in this case,
$$
\E\[e^{\gamma S_\infty}\] =  
 K_{\alpha,\theta}^\gamma \frac{\Gamma(\theta+1)}{\Gamma(\theta+\gamma +1)} 
          \frac{\Gamma\(\frac{\theta+\gamma}{\alpha}+1\)}{\Gamma\(\frac{\theta}{\alpha}+1\)}.
$$
\end{lemma}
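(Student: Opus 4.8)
The plan is to analyze $S_N = \mu_N + \sum_{i=1}^N \log(1-Y_i)$ as a martingale with independent increments and show it converges almost surely and in $\Lp{1}$, then identify the limiting moment. First I would record the exact moment formula: since $Y_i\sim\mathrm{Beta}(1-\alpha,\theta+i\alpha)$, for any $\gamma$ with $\theta+i\alpha+\gamma>0$ we have $\E[(1-Y_i)^\gamma]=\Betaf(1-\alpha,\theta+i\alpha+\gamma)/\Betaf(1-\alpha,\theta+i\alpha)=\Gamma(\theta+i\alpha+\gamma)\Gamma(\theta+i\alpha+1-\alpha)/\big(\Gamma(\theta+i\alpha)\Gamma(\theta+i\alpha+1-\alpha+\gamma)\big)$, and in particular $\E[\log(1-Y_i)]=\psi(\theta+i\alpha+1-\alpha)-\psi(\theta+i\alpha+1)$ by differentiating in $\gamma$ at $\gamma=0$. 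For the almost sure convergence I would argue directly that $\sum_i \big(\log(1-Y_i)-\E[\log(1-Y_i)]\big)$ converges a.s.: the increments are independent and centered, and using $\psi(z+1-\alpha)-\psi(z+1) = -\alpha/z + \bO{z^{-2}}$ as $z\to\infty$ together with a variance estimate $\Var(\log(1-Y_i))=\psi'(\theta+i\alpha+1-\alpha)-\psi'(\theta+i\alpha+1)=\bO{i^{-2}}$, the series of variances converges, so Kolmogorov's one-series theorem gives a.s. convergence of $S_N$ to some $S_\infty$, hence $e^{\gamma S_N}\to e^{\gamma S_\infty}$ a.s. for every $\gamma$.

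For the $\Lp{1}$ convergence when $\gamma>-(\theta+\alpha)$, I would show $\stp{e^{\gamma S_N}}$ is a uniformly integrable martingale — in fact I would show it is bounded in $\Lp{p}$ for some $p>1$ (or directly compute that $\E[e^{\gamma S_N}]$ converges to a finite limit while $e^{\gamma S_N}\to e^{\gamma S_\infty}$ a.s., and invoke Scheffé once we know the limit of the expectations equals $\E[e^{\gamma S_\infty}]$). Concretely, $\E[e^{\gamma S_N}] = e^{\gamma\mu_N}\prod_{i=1}^N\E[(1-Y_i)^\gamma]$, and since $\stp{e^{\gamma S_N}}$ is a martingale this expectation is constant in $N$ — wait, it is constant, so it already equals $\E[e^{\gamma S_0}]$; the real content is that the a.s. limit is genuinely integrable with this value, which requires uniform integrability. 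The cleanest route: for $\gamma>-(\theta+\alpha)$ pick $p>1$ with $p\gamma>-(\theta+\alpha)$ still (possible since the constraint is open), and check $\sup_N\E[e^{p\gamma S_N}]<\infty$ by the same telescoping-product computation with $\gamma$ replaced by $p\gamma$; the product $\prod_{i\ge1}\E[(1-Y_i)^{p\gamma}]e^{p\gamma\E[\log(1-Y_i)]}$ converges because its logarithm is $\sum_i\log\E[(1-Y_i)^{p\gamma}]-p\gamma\E[\log(1-Y_i)] = \sum_i \bO{i^{-2}}$ by a second-order Taylor expansion of $\log\Betaf$ in the exponent. This gives $\Lp{p}$-boundedness, hence uniform integrability, hence $\Lp{1}$ convergence.

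It then remains to compute $\E[e^{\gamma S_\infty}] = \lim_N e^{\gamma\mu_N}\prod_{i=1}^N \E[(1-Y_i)^\gamma]$. Writing $\mu_N = \sum_{i=1}^N\big(\psi(\theta+i\alpha+1)-\psi(\theta+i\alpha+1-\alpha)\big)$ and telescoping the Gamma ratios, $\prod_{i=1}^N\E[(1-Y_i)^\gamma]$ becomes a ratio of four Gamma factors evaluated at $\theta+N\alpha+(\cdots)$; I would then feed in the asymptotics $\Gamma(z+a)/\Gamma(z+b)\sim z^{a-b}$ and $\sum_{i=1}^N(\theta/\alpha+i)^{-1} = \log N + \EMc_{\theta/\alpha}+\lo{1}$ to resolve the divergent $e^{\gamma\mu_N}$ against the vanishing Gamma ratio, the powers of $N$ cancelling by construction. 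Collecting the surviving constants — the $\psi(\theta+1)$ from the $i=1$ boundary term in $\mu_N$, the $\EMc_{\theta/\alpha}$, and the fixed Gamma values — yields exactly $K_{\alpha,\theta}^\gamma\,\Gamma(\theta+1)\Gamma\big(\tfrac{\theta+\gamma}{\alpha}+1\big)\big/\big(\Gamma(\theta+\gamma+1)\Gamma\big(\tfrac{\theta}{\alpha}+1\big)\big)$ with $K_{\alpha,\theta}=\exp\{\psi(\theta+1)-\tfrac1\alpha\EMc_{\theta/\alpha}\}$. The main obstacle is the bookkeeping in this last step: one must be careful that the $N^{(\cdot)}$ powers coming from the two $\Gamma$-ratios in the numerator and the two in the denominator, together with the $e^{\gamma\log N}$ hidden inside $e^{\gamma\mu_N}$, all cancel, and that the finite constants are assembled without sign or shift errors — this is routine but error-prone, and is where I would spend the most care. (Since this lemma is stated as an adaptation of Lemma 2.4 of \cite{CortinesMallein2017}, the structure of that computation can be followed closely, only tracking the extra $\theta$-dependence.)
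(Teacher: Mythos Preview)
Your approach is correct but takes a different route from the paper. The paper's proof computes only the asymptotics of $\mu_N$ (their equation \eqref{eq:muNasmp}, yielding $e^{\gamma\mu_N}\sim\alpha^{-\gamma}K_{\alpha,\theta}^\gamma N^{\gamma(1-\alpha)/\alpha}$) and then \emph{delegates} the a.s.\ and $\Lp{1}$ convergence of $e^{\gamma S_N}$, together with the moment formula, to Lemma~2.4 of \cite{CortinesMallein2017} via the identity $\prod_{i=1}^N(1-Y_i)^\gamma=e^{-\gamma\mu_N}e^{\gamma S_N}$. The paper even remarks parenthetically that ``one can also check that $S_N$ is itself an $\Lp{2}$-bounded martingale, we avoid such computations here'' --- your plan is precisely to carry out what the paper chose to skip: variance summability of the increments for a.s.\ convergence, $\Lp{p}$-boundedness for uniform integrability, and an explicit Gamma telescoping for the limiting moment. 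Your route is fully self-contained; the paper's is shorter on the page but leans on an external reference.

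One small slip worth flagging: $e^{\gamma S_N}$ is a \emph{sub}martingale (exponential of a martingale), not a martingale, so $\E[e^{\gamma S_N}]$ is non-decreasing rather than constant. You notice something is off mid-sentence and correctly pivot to $\Lp{p}$-boundedness, which is indeed the right fix. For the final computation, the telescoping you anticipate does work cleanly: using $\Gamma(z+1)=z\Gamma(z)$ one obtains
\[
\prod_{i=1}^N\E\bigl[(1-Y_i)^\gamma\bigr]=\frac{\Gamma(\theta+N\alpha+\gamma)}{\Gamma(\theta+N\alpha)}\cdot\frac{\Gamma(\theta+1)}{\Gamma(\theta+\gamma+1)}\cdot\prod_{i=1}^{N-1}\frac{\theta+i\alpha}{\theta+i\alpha+\gamma},
\]
and the last product rewrites as a ratio of Gammas in $\theta/\alpha+N$ and $(\theta+\gamma)/\alpha+N$; combined with $e^{\gamma\mu_N}$ all $N$-powers cancel exactly as you predict.
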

\begin{proof}
First recall that if $X$ is Beta$(a,b)$ distributed, then
$\E\[\log(X)\] = \psi(a)-\psi(a+b).$
So
we have $-\E\[\log(1-Y_i)\]=\psi(\theta + (i-1)\alpha + 1)-\psi(\theta + i\alpha)$. Using the
well-known identity $\psi(z+1)=\psi(z)+z^{-1}$,
the bounds $\log(z)-z^{-1}\leq  \psi(z) \leq \log(z+1)-z^{-1}$ for $z>0$ (see Corollary 2.3 in \cite{MuqattashYahdi2006}),
and the definition of $\EMc_{\theta/\alpha}$
, we obtain, as $N\to\infty$, 
\begin{align}\label{eq:muNasmp}
\mu_N &= \psi(\theta+1)-\psi(\theta+N\alpha)+\sum_{i=1}^{N-1} \frac{1}{\theta + i\alpha}\\
      &=  \psi(\theta+1) - \log(N) - \log(\alpha) + \frac{1}{\alpha}\log(N) - \frac{1}{\alpha}\EMc_{\theta/\alpha} + \lo{1}.
      \nonumber  
\end{align}
Thus,  we arrive at
\begin{equation}\label{eq:PDnormAsmp}
e^{\gamma\mu_N} 
\sim \alpha^{-\gamma} K_{\alpha,\theta}^\gamma N^{\gamma(1-\alpha)/\alpha}
\end{equation}
for any $\gamma \in \mathbb{R}$. This, together with \eqref{eq:PDprodSN} and Lemma 2.4 in \cite{CortinesMallein2017}, implies the stated 
a.s. and $\Lp{p}$ convergences for $e^{\gamma S_N}$ to some strictly positive random variable, so that
$S_N$ itself converges to a (finite) r.v. $S_\infty$
(in fact one can also check that $S_N$ is itself an $\Lp{2}$-bounded martingale, we avoid such computations here). 
\end{proof}

We now study the asymptotic behavior of the two terms, $\overline{M}_{N,\gamma}$ and $\Sigma_{N,\gamma}$, that compose 
$\zeta_{N,\gamma}$ in \eqref{eq:PDSNident}.

\begin{lemma}\label{le:PDintMart}
Assume $\gamma > \alpha/2$, then there exists a r.v. 
$\overline{M}_{\infty,\gamma}$ such that
$$
\lim_{N\to\infty} \overline{M}_{N,\gamma} = \overline{M}_{\infty,\gamma} 
$$ 
{ almost surely and in }$\Lp{2}$.
As a straightforward consequence, if $\gamma\leq \alpha$, then
$$
\lim_{N\to\infty} \frac{\overline{M}_{N,\gamma}}{u_N} = 0
$$
{ almost surely and in }$\Lp{2}$.
\end{lemma}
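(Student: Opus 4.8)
The plan is to recognise $\overline{M}_{N,\gamma}$ as an $L^2$-bounded martingale and apply Doob's convergence theorem, with the hypothesis $\gamma>\alpha/2$ entering exactly as the summability threshold for the increment variances. First I would fix the natural filtration $\F_N:=\sigma(Y_1,\dots,Y_N)$ of the stick-breaking sequence (with $S_0=\mu_0=0$, $\overline{M}_{0,\gamma}=0$) and simplify the increments: since $M_{i,\gamma}-M_{i-1,\gamma}=i^{\gamma-1}\bigl(Y_i^\gamma-\E[Y_i^\gamma]\bigr)$, the factor $i^{1-\gamma}$ in the definition of $\overline{M}_{N,\gamma}$ cancels the $i^{\gamma-1}$, leaving
$$
\overline{M}_{N,\gamma}-\overline{M}_{N-1,\gamma}=e^{-\gamma\mu_{N-1}}e^{\gamma S_{N-1}}\bigl(Y_N^\gamma-\E[Y_N^\gamma]\bigr).
$$
As $\mu_{N-1}$ is deterministic, $S_{N-1}$ is $\F_{N-1}$-measurable and $Y_N$ is independent of $\F_{N-1}$ with matching mean (note $\gamma>\alpha/2>\alpha-1$ makes the $\gamma$-moment of the Beta$(1-\alpha,\theta+N\alpha)$ law finite), this re-confirms the martingale property.

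Next I would bound the $L^2$-norm of the increments. By independence of $Y_N$ and $\F_{N-1}$,
$$
\E\bigl[(\overline{M}_{N,\gamma}-\overline{M}_{N-1,\gamma})^2\bigr]=e^{-2\gamma\mu_{N-1}}\;\E\bigl[e^{2\gamma S_{N-1}}\bigr]\;\Var{Y_N^\gamma},
$$
and I would control the three factors: (a) $e^{-2\gamma\mu_{N-1}}=\bO{N^{-2\gamma(1-\alpha)/\alpha}}$ by \eqref{eq:PDnormAsmp} (applied with exponent $-2\gamma$); (b) $\sup_N\E[e^{2\gamma S_{N-1}}]<\infty$, since Lemma \ref{le:PDasympSummands} applied with exponent $2\gamma$ gives $\Lp{1}$ convergence of $e^{2\gamma S_N}$ (legitimate because $2\gamma>\alpha>-(\theta+\alpha)$, the last inequality using $\theta>-\alpha$), so the sequence $\E[e^{2\gamma S_{N-1}}]$ converges and is bounded; (c) $\Var{Y_N^\gamma}\le\E[Y_N^{2\gamma}]=\Betaf(1-\alpha+2\gamma,\theta+N\alpha)/\Betaf(1-\alpha,\theta+N\alpha)=\bO{N^{-2\gamma}}$ by Stirling's asymptotics for the ratio of Gamma functions. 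Multiplying and using $-2\gamma(1-\alpha)/\alpha-2\gamma=-2\gamma/\alpha$ gives $\E\bigl[(\overline{M}_{N,\gamma}-\overline{M}_{N-1,\gamma})^2\bigr]=\bO{N^{-2\gamma/\alpha}}$. Since $\gamma>\alpha/2$ means $2\gamma/\alpha>1$, the series of increment variances converges, so $\overline{M}_{N,\gamma}$ is bounded in $L^2$ and, by Doob's $\Lp{2}$ martingale convergence theorem, converges a.s.\ and in $\Lp{2}$ to a limit $\overline{M}_{\infty,\gamma}$.

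For the stated consequence, when $\gamma\le\alpha$ one has $\gamma/\alpha\le1$, hence $u_N=\sum_{i=1}^N i^{-\gamma/\alpha}\to\infty$; dividing the convergent sequence $\overline{M}_{N,\gamma}$ by $u_N$ then yields $\overline{M}_{N,\gamma}/u_N\to0$ almost surely (the numerator converges, so is a.s.\ bounded) and in $\Lp{2}$ (since $\E[(\overline{M}_{N,\gamma}/u_N)^2]\le\sup_N\E[\overline{M}_{N,\gamma}^2]/u_N^2\to0$). The only genuinely delicate point is the threshold bookkeeping: one must carry out the exponent arithmetic exactly to see that summability of $\bO{N^{-2\gamma/\alpha}}$ is equivalent to $\gamma>\alpha/2$, so no slack is lost; everything else reduces to Stirling's formula and the already-established asymptotics for $\mu_N$ (in \eqref{eq:PDnormAsmp}) and for $e^{\gamma S_N}$ (in Lemma \ref{le:PDasympSummands}).
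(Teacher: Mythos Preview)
Your proposal is correct and follows essentially the same route as the paper: bound the variance of the martingale increments by combining the asymptotics $e^{-2\gamma\mu_{N-1}}\asymp N^{-2\gamma(1-\alpha)/\alpha}$ from \eqref{eq:PDnormAsmp}, the boundedness of $\E[e^{2\gamma S_{N-1}}]$ from Lemma \ref{le:PDasympSummands}, and $\Var{Y_N^\gamma}=\bO{N^{-2\gamma}}$ to obtain the summable series $\sum N^{-2\gamma/\alpha}$, then invoke $\Lp{2}$-martingale convergence. The only cosmetic difference is that you bound $\Var{Y_N^\gamma}\le\E[Y_N^{2\gamma}]$ directly while the paper writes out the full variance; both yield the same exponent and the same conclusion.
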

\begin{proof}
We compute, for every 
$\gamma>0$,
\begin{align*}
\E\[(M_{i,\gamma}-M_{i-1,\gamma})^2\]&=\Var{i^{\gamma-1}Y_i^\gamma} \\&= 
\frac{i^{-2}}{\alpha^{2\gamma}}\( \frac{\Gamma(1+2\gamma-\alpha)}{\Gamma(1-\alpha)}-\frac{\Gamma(1+\gamma-\alpha)^2}{\Gamma(1-\alpha)^2} + \bO{i^{-1}}\)
\end{align*}
as $i\to\infty$ so that,
using \eqref{eq:PDnormAsmp}, Lemma \ref{le:PDasympSummands}, and the condition $\gamma>{\alpha}/{2}$
\begin{align*}
&\sum_{i=1}^\infty \Var{\overline{M}_{i+1,\gamma}-\overline{M_{i,\gamma}}}\\
&=\sum_{i=1}^\infty i^{-2{\gamma}/{\alpha}} 
 \E\[e^{2\gamma S_{i}}\] \alpha^{-2\gamma}\(\frac{\Gamma(1+2\gamma-\alpha)}{\Gamma(1-\alpha)}-\frac{\Gamma(1+\gamma-\alpha)^2}{\Gamma(1-\alpha)^2}\)(1+\bO{i^{-1}})\\
&\leq C\E\[e^{2\gamma S_{\infty}}\]\sum_{i=1}^\infty i^{-2\gamma/\alpha}
<\infty
\end{align*}
for some $C>0$.
This implies the a.s. and $\Lp{2}$ convergence of $\overline{M}_{N,\gamma}$ to some r.v. $\overline{M}_{\infty,\gamma}$.
\end{proof}

\begin{lemma}\label{le:PDSumConv}
We have, for $\gamma\leq \alpha$,
\begin{equation*}\label{eq:PDSumConv}
\lim_{N\to\infty} \frac{\Sigma_{N,\gamma}}{u_N} = 
\frac{\Gamma(1+\gamma-\alpha)}{\Gamma(1-\alpha)} 
K_{\alpha,\theta}^{-\gamma}
e^{\gamma S_\infty}
\end{equation*}
almost surely and in $\Lp{1}$.
\end{lemma}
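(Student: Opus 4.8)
\emph{Proof plan.} The idea is to show that the $i$-th summand of $\Sigma_{N,\gamma}$ is asymptotically $C\,i^{-\gamma/\alpha}e^{\gamma S_{i-1}}$ with $C=\frac{\Gamma(1+\gamma-\alpha)}{\Gamma(1-\alpha)}K_{\alpha,\theta}^{-\gamma}$, and then to pass to the limit by a Toeplitz (generalized Ces\`aro) averaging argument, using the a.s.\ and $L^1$ convergence $e^{\gamma S_{i-1}}\to e^{\gamma S_\infty}$ supplied by Lemma \ref{le:PDasympSummands}.

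First I would analyze the deterministic prefactor $a_i:=\frac{\Betaf(1+\gamma-\alpha,\theta+i\alpha)}{\Betaf(1-\alpha,\theta+i\alpha)}\,e^{-\gamma\mu_{i-1}}$. Writing the Beta functions through Gamma functions, the ratio telescopes to $\frac{\Gamma(1+\gamma-\alpha)}{\Gamma(1-\alpha)}\cdot\frac{\Gamma(i\alpha+\theta+1-\alpha)}{\Gamma(i\alpha+\theta+1+\gamma-\alpha)}$, and Stirling's approximation in the form $\Gamma(m+a)/\Gamma(m+b)=m^{a-b}(1+\bO{1/m})$ with $m=i\alpha$ gives $\frac{\Betaf(1+\gamma-\alpha,\theta+i\alpha)}{\Betaf(1-\alpha,\theta+i\alpha)}=\frac{\Gamma(1+\gamma-\alpha)}{\Gamma(1-\alpha)}\,\alpha^{-\gamma}i^{-\gamma}(1+\bO{1/i})$. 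Combining with $e^{\gamma\mu_{i-1}}\sim\alpha^{-\gamma}K_{\alpha,\theta}^{\gamma}i^{\gamma(1-\alpha)/\alpha}$ from \eqref{eq:PDnormAsmp} and the elementary identity $\gamma+\gamma(1-\alpha)/\alpha=\gamma/\alpha$, one obtains $a_i=C\,i^{-\gamma/\alpha}(1+\lo{1})$ as $i\to\infty$; in particular $a_i/i^{-\gamma/\alpha}\to C$.

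Next I would deduce the almost sure convergence. Write $\Sigma_{N,\gamma}/u_N=\frac{1}{u_N}\sum_{i=1}^N i^{-\gamma/\alpha}x_i$ with $x_i:=(a_i/i^{-\gamma/\alpha})\,e^{\gamma S_{i-1}}$, so that by the previous step and Lemma \ref{le:PDasympSummands} we have $x_i\to C e^{\gamma S_\infty}$ a.s. Since $\gamma\le\alpha$ we have $\gamma/\alpha\le1$, hence $u_N=\sum_{i=1}^N i^{-\gamma/\alpha}\to\infty$, and the Toeplitz lemma applied with the nonnegative weights $w_i=i^{-\gamma/\alpha}$ (whose partial sums diverge) yields $\Sigma_{N,\gamma}/u_N\to C e^{\gamma S_\infty}$ almost surely.

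For the $L^1$ statement I would invoke Scheff\'e's lemma: everything in sight is nonnegative and we already have a.s.\ convergence, so it suffices to show $\E[\Sigma_{N,\gamma}/u_N]\to C\,\E[e^{\gamma S_\infty}]$. Taking expectations in the finite sum, $\E[\Sigma_{N,\gamma}]=\sum_{i=1}^N a_i\,\E[e^{\gamma S_{i-1}}]$, and since $\E[e^{\gamma S_{i-1}}]\to\E[e^{\gamma S_\infty}]$ by the $L^1$ convergence of Lemma \ref{le:PDasympSummands} (whose hypothesis $\gamma>-(\theta+\alpha)$ holds because $\theta>-\alpha$), a second application of the Toeplitz lemma with weights $w_i=i^{-\gamma/\alpha}$ and $a_i/i^{-\gamma/\alpha}\to C$ gives $\E[\Sigma_{N,\gamma}]/u_N\to C\,\E[e^{\gamma S_\infty}]$; Scheff\'e then upgrades the almost sure convergence to $L^1$. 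The only genuinely delicate point is the bookkeeping in the first step, namely verifying that the $\bO{1/i}$ Stirling errors and the $\lo{1}$ error in \eqref{eq:PDnormAsmp} merge into a single harmless multiplicative factor $1+\lo{1}$ that does not affect the Ces\`aro averaging; the rest is routine.
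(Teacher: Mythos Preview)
Your proposal is correct and follows essentially the same route as the paper: the paper derives the same asymptotic $a_i\sim C\,i^{-\gamma/\alpha}$ (stated there as equation \eqref{eq:PDEYialphaAsmp} combined with \eqref{eq:PDnormAsmp}), invokes Stolz--Ces\`aro where you invoke the Toeplitz lemma (these are equivalent here), and then upgrades to $\Lp{1}$ by computing $\E[\Sigma_{N,\gamma}]/u_N$ and applying Scheff\'e exactly as you do. One small remark: your justification ``$\gamma>-(\theta+\alpha)$ holds because $\theta>-\alpha$'' only shows $-(\theta+\alpha)<0$, not the inequality itself; in the paper's applications one always has $\gamma>\alpha/2>0$, so this is harmless, but strictly speaking the $\Lp{1}$ statement needs that extra hypothesis on $\gamma$ (which the paper also leaves implicit).
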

\begin{proof}
Observe that, as $N\to\infty$, 
\begin{equation}\label{eq:PDEYialphaAsmp}
\frac{\Betaf(1+\gamma-\alpha,\theta+N\alpha)}{\Betaf(1-\alpha,\theta+N\alpha)} = 
\frac{\alpha^{-\gamma}\Gamma(1+\gamma-\alpha)}{\Gamma(1-\alpha)}N^{-\gamma}\(1+\bO{N^{-1}}\).
\end{equation}
Then equation \eqref{eq:PDnormAsmp} together with Lemma \ref{le:PDasympSummands} and the Stolz-Césaro 
theorem yield
\begin{align*}
\lim_{N\to\infty} \frac{\Sigma_{N,\gamma}}{u_N}
&{=}\frac{\alpha^{-\gamma}\Gamma(1+\gamma-\alpha)}{\Gamma(1-\alpha)}\lim_{N\to\infty} 
                 \frac{ N^{-\gamma}e^{-\gamma\mu_{N-1}}e^{\gamma S_{N-1}}}{N^{-{\gamma}/{\alpha}}}\\
&{=}\frac{\alpha^{-\gamma}\Gamma(1+\gamma-\alpha)}{\Gamma(1-\alpha)}
                  \alpha^\gamma K_{\alpha,\theta}^{-\gamma}
                  e^{\gamma S_\infty}
\end{align*}
almost surely.
Hence the desired a.s. convergence follows. Similarly, by the same equation \eqref{eq:PDnormAsmp} 
and Lemma \ref{le:PDasympSummands},
$$
\lim_{N\to\infty} \frac{ \E\[\Sigma_{N,\gamma}\]}{u_N}
=\frac{\Gamma(1+\gamma-\alpha)}{\Gamma(1-\alpha)}
   K_{\alpha,\theta}^{-\gamma}
   \E\[e^{\gamma S_\infty}\]
$$
so that Scheffe's lemma gives the corresponding $\Lp{1}$ convergence.
\end{proof}
Thanks to the previous lemmas, we obtain the asymptotic behavior of $\zeta_{N,\gamma}$.
\begin{proposition}\label{prop:PDnormsumAsmp}
If $\alpha/2< \gamma \leq \alpha$, then 
\begin{equation*}
\lim_{N\to\infty} \frac{\zeta_{N,\gamma}}{u_N} = 
\frac{\Gamma(1+\gamma-\alpha)}{\Gamma(1-\alpha)} 
K_{\alpha,\theta}^{-\gamma}
e^{\gamma S_\infty}
\end{equation*}
almost surely and in $\Lp{1}$.
Moreover, if $\eta\gamma<\theta+\alpha$, then 
\begin{equation}\label{eq:asmpZetaEta}
\lim_{N\to\infty} \(\frac{\zeta_{N,\gamma}}{u_N}\)^{-\eta} =
\(\frac{\Gamma(1+\gamma-\alpha)}{\Gamma(1-\alpha)}K_{\alpha,\theta}^{-\gamma} 
e^{\gamma S_\infty}\)^{-\eta}
\end{equation}
almost surely and in $\Lp{1}$. \par 
On the other hand, if $\gamma>\alpha$, then there exists a r.v. $\zeta_{\infty,\gamma}$ such that
\begin{equation}\label{gammay}
\lim_{N\to\infty} \zeta_{N,\gamma}  = \zeta_{\infty,\gamma} 
\end{equation}
almost surely and in $\Lp{p}$ for $p\geq1$.
\end{proposition}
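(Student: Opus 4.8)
I would prove the three assertions from the identity $\zeta_{N,\gamma}=\overline{M}_{N,\gamma}+\Sigma_{N,\gamma}$ of \eqref{eq:PDSNident}, together with Lemmas~\ref{le:PDasympSummands}, \ref{le:PDintMart} and \ref{le:PDSumConv}.

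\textbf{The case $\alpha/2<\gamma\le\alpha$.} Here $\gamma/\alpha\le1$, so $u_N\to\infty$. Lemma~\ref{le:PDintMart} gives $\overline{M}_{N,\gamma}/u_N\to0$ a.s.\ and in $\Lp2$, and Lemma~\ref{le:PDSumConv} gives $\Sigma_{N,\gamma}/u_N\to\frac{\Gamma(1+\gamma-\alpha)}{\Gamma(1-\alpha)}K_{\alpha,\theta}^{-\gamma}e^{\gamma S_\infty}$ a.s.\ and in $\Lp1$; adding the two and using $\Lp2\subset\Lp1$ proves the first claim. Since $e^{\gamma S_\infty}>0$ a.s.\ by Lemma~\ref{le:PDasympSummands}, the a.s.\ convergence of $(\zeta_{N,\gamma}/u_N)^{-\eta}$ to the asserted limit then follows from the continuous mapping theorem.

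\textbf{The $\Lp1$ convergence in \eqref{eq:asmpZetaEta}.} This is the crux. The limit lies in $\Lp1$ precisely because $\eta\gamma<\theta+\alpha$: applying the $\Lp1$ part of Lemma~\ref{le:PDasympSummands} with exponent $-\eta\gamma>-(\theta+\alpha)$ shows $e^{-\eta\gamma S_\infty}$ is integrable. Given the a.s.\ convergence, it then suffices to show that $\{(\zeta_{N,\gamma}/u_N)^{-\eta}\}_{N\ge N_0}$ is uniformly integrable for some $N_0$, which I would obtain by bounding $\sup_{N\ge N_0}\E[(\zeta_{N,\gamma}/u_N)^{-\eta'}]$ for a fixed $\eta'\in(\eta,(\theta+\alpha)/\gamma)$. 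Writing $\tilde V_i^\gamma=Y_i^\gamma e^{-\gamma\mu_{i-1}}e^{\gamma S_{i-1}}$ and using $e^{\gamma S_{i-1}}\ge e^{\gamma\inf_jS_j}$,
\begin{equation*}
\zeta_{N,\gamma}\ \ge\ e^{\gamma\inf_jS_j}\,\Theta_N,\qquad \Theta_N:=\sum_{i=1}^N Y_i^\gamma e^{-\gamma\mu_{i-1}}.
\end{equation*}
For $p\gamma<\theta+\alpha$ the process $e^{-p\gamma S_j}$ is an integrable nonnegative submartingale, so Doob's $\Lp p$ maximal inequality combined with the $\Lp1$-boundedness of $e^{-p\gamma S_N}$ obtained from Lemma~\ref{le:PDasympSummands} (exponent $-p\gamma$) gives $\E[e^{-p\gamma\inf_jS_j}]<\infty$ exactly in that range — this is where the hypothesis on $\eta$ enters. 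The factor $\Theta_N$ is a sum of independent summands with $\E[\Theta_N]\sim c\,u_N$ and, because $\gamma>\alpha/2$ (so $\sum_i i^{-2\gamma/\alpha}<\infty$), uniformly bounded variances; hence $\Theta_N-\E[\Theta_N]$ is an $\Lp2$-bounded martingale, and comparing $\Theta_N/u_N$ with its deterministic convergent mean shows that $\inf_{N\ge N_0}\Theta_N/u_N$ has finite negative moments of any prescribed order once $N_0$ is large — here one uses that $\E[\Theta_{N_0}^{-q}]<\infty$ for $N_0$ large, since $\Theta_{N_0}$ is small only on the event that all of $Y_1,\dots,Y_{N_0}$ are atypically small, of probability $O(\delta^{N_0(1-\alpha)/\gamma})$. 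Combining the two estimates by Hölder's inequality, with exponents chosen so that the exponential factor keeps an exponent $<(\theta+\alpha)/\gamma$, yields the required uniform bound. I expect this uniform negative-moment estimate to be the main difficulty; it plays the role here of the analogous lemma in \cite{CortinesMallein2017}.

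\textbf{The case $\gamma>\alpha$.} Now $\zeta_{N,\gamma}$ is nondecreasing, hence converges a.s.\ to $\zeta_{\infty,\gamma}:=\sum_{i\ge1}\tilde V_i^\gamma\in[0,\infty]$, and by monotone convergence $\Lp p$ convergence is equivalent to $\E[\zeta_{\infty,\gamma}^p]<\infty$. Bounding $\tilde V_i^\gamma\le e^{\gamma\sup_jS_j}\,Y_i^\gamma e^{-\gamma\mu_{i-1}}$ gives $\zeta_{\infty,\gamma}\le e^{\gamma\sup_jS_j}\,\Theta_\infty$ with $\Theta_\infty=\sum_{i\ge1}Y_i^\gamma e^{-\gamma\mu_{i-1}}$. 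As before, Doob's inequality and Lemma~\ref{le:PDasympSummands} (whose $\Lp p$ statement holds for every $p\ge1$ when $\gamma>0$) give $\E[e^{p\gamma\sup_jS_j}]<\infty$ for all $p$; and $\Theta_\infty$ is a sum of independent nonnegative bounded summands with $\sum_i\E[(Y_i^\gamma e^{-\gamma\mu_{i-1}})^m]\sim c_m\sum_i i^{-m\gamma/\alpha}<\infty$ for every $m\ge1$ (here $\gamma>\alpha$ is essential), so $\Theta_\infty$ has moments of all orders. Hölder's inequality then gives $\E[\zeta_{\infty,\gamma}^p]<\infty$, completing the proof.
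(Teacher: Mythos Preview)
Your treatment of the first assertion (case $\alpha/2<\gamma\le\alpha$) and of the case $\gamma>\alpha$ is essentially the paper's: the decomposition \eqref{eq:PDSNident} plus Lemmas~\ref{le:PDintMart} and~\ref{le:PDSumConv} for the former, and an $\Lp{p}$-triangle-inequality bound on the summands $\tilde V_i^\gamma$ for the latter (the paper does this directly via $\norm{\zeta_{N,\gamma}}_p\le\sum_i\norm{Y_i^\gamma}_p e^{-\gamma\mu_{i-1}}\norm{e^{\gamma S_{i-1}}}_p$, you pull out $\sup_j e^{\gamma S_j}$ first; both work).

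The genuine divergence is in the $\Lp{1}$ part of \eqref{eq:asmpZetaEta}. The paper does \emph{not} factor as $\zeta_{N,\gamma}\ge e^{\gamma\inf_jS_j}\Theta_N$; instead it applies Jensen's inequality to the concave function $\log$ and the probability measure on $\{1,\dots,N\}$ with weights $i^{-\gamma/\alpha}/u_N$, obtaining
\[
\Bigl(\frac{u_N}{\zeta_{N,\gamma}}\Bigr)^{\eta}\le\exp\Bigl\{-\frac{\eta\gamma}{u_N}\sum_{i=1}^N i^{-\gamma/\alpha}\bigl(\log Y_i-\mu_{i-1}+S_{i-1}+\tfrac{1}{\alpha}\log i\bigr)\Bigr\},
\]
and then separating the $S_{i-1}$ contribution (via a submartingale sup, as you do) from the $\log Y_i$ contribution (computed explicitly through Beta moments). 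This bypasses entirely the need for uniform negative moments of $\Theta_N/u_N$.

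Your route, by contrast, needs $\sup_{N\ge N_0}\E\bigl[(\Theta_N/u_N)^{-q}\bigr]<\infty$, and the sketch you give does not establish it. Knowing that $\Theta_N-\E[\Theta_N]$ is an $\Lp{2}$-bounded martingale and that $\E[\Theta_{N_0}^{-q}]<\infty$ for large $N_0$ does not combine into a bound on $(\Theta_N/u_N)^{-q}$ uniform in $N$: on the event $\{\sup_k|\Theta_k-\E[\Theta_k]|\ge \E[\Theta_{N_0}]/2\}$ you only have $\Theta_N\ge\Theta_{N_0}$, and since $u_N\to\infty$ this gives $u_N^q\E[\Theta_{N_0}^{-q}]\to\infty$. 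A correct control of $\E[(\Theta_N/u_N)^{-q}]$ seems to require precisely the Jensen-on-$\log$ device of the paper (applied to $\Theta_N$), at which point your preliminary factorization becomes superfluous. So while the factorization is natural, the hard step you flag as ``the main difficulty'' is not resolved by your argument, and the paper's Jensen trick is what actually closes it.

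A minor point: your appeal to ``Doob's $\Lp{p}$ maximal inequality combined with the $\Lp{1}$-boundedness of $e^{-p\gamma S_N}$'' is slightly off --- Doob's $\Lp{p}$ inequality needs $p>1$, so to get $\E[e^{-p\gamma\inf_jS_j}]<\infty$ for $p\gamma<\theta+\alpha$ you should apply it to $e^{-a\gamma S_j}$ with $a<p$ and exponent $p/a>1$, using $\Lp{p/a}$-boundedness rather than $\Lp{1}$.
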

\begin{proof}
We first assume $\gamma\leq\alpha$. The a.s. and $\Lp{1}$ convergences in this case follow directly from \eqref{eq:PDSNident} and Lemmas \ref{le:PDintMart} 
and \ref{le:PDSumConv}. The second convergences will follow once we prove 
\begin{equation}\label{eq:PDnormSuminL1}
\limsup_{N\geq0}\E\[\(\frac{u_N}{\zeta_{N,\gamma}}\)^{\eta}\]<\infty
\end{equation}
together with an application of dominated convergence and Scheffe's lemma. To upperbound the expectations in \eqref{eq:PDnormSuminL1} observe that, 
if $\E_\Omega$ is the expectation operator on the 
probability space $$\(\Omega=\{f(1),\dots,f(N)\}, \frac{\sum_{i=1}^N \delta_{f(i)}i^{-{\gamma}/{\alpha}}}{u_N}\),$$
where $f(i)\coloneqq Y_i^\gamma e^{-\gamma\mu_{i-1}}e^{\gamma S_{i-1}}i^{{\gamma}/{\alpha}}$,
then, by Jensen's inequality we have
\begin{align*}
{\zeta_{N,\gamma}^{-\eta}}
&=\exp\lbr -\eta\log\(\sum_{i=1}^N Y_i^\gamma e^{-\gamma\mu_{i-1}}e^{\gamma S_{i-1}}i^{{\gamma}/{\alpha}}i^{-{\gamma}/{\alpha}}\)\rbr\\
&=e^{-\eta \log\(u_N\)} 
  e^{ -\eta\log\(\E_\Omega\[x\]\) }
\\
&\leq e^{-\eta \log\(u_N\)} 
  e^{- \eta\E_\Omega\[\log(x)\] }\\
&= 
    \(u_N\)^{-\eta}
      \exp\lbr- \frac{\eta\gamma}{u_N}\sum_{i=1}^N \frac{\log(Y_i)-\mu_{i-1}+S_{i-1}+\frac{1}{\alpha}\log(i)}{i^{{\gamma}/{\alpha}}}\rbr.
\end{align*}
Furthermore, by Lemma \ref{le:PDasympSummands} and the condition $\eta\gamma<\alpha+\theta$ we have, for 
$\eps>0$ small enough and using that $\(e^{-\eta\gamma (1+\eps)S_N}\)_{N\le1}$ is a submartingale,
$$
\exp\lbr-\eta\gamma(1+\eps)  \frac{\sum_{i=1}^N S_{i-1}i^{-{\gamma}/{\alpha}}}{u_N} \rbr\leq \sup_{N\in\N} e^{-\eta\gamma (1+\eps) S_{N}}\in \Lp{1}.
$$
Thus, plugging these estimates and using H\"older's inequality ($p=1+\eps$), 
\begin{align}\label{eq:zetaNasmp1}
&\E\[\(\frac{u_N}{\zeta_{N,\gamma}}\)^{\eta}\]\notag\\
&\leq \norm{\sup_{N\in\N} e^{-\eta\gamma (1+\eps) S_{N}}}_{{1}}
\E\[\exp\lbr-\frac{1+\eps}{\eps} \frac{\eta\gamma}{u_N}
                    \sum_{i=1}^N \frac{\log(Y_i)-\mu_{i-1}+\frac{1}{\alpha}\log(i)}{i^{{\gamma}/{\alpha}}} \rbr\]^{\frac{\eps}{1+\eps}}.
\end{align}
We now compute,
for $N$ large enough
to ensure $\frac{1+\eps}{\eps} \frac{\eta\gamma}{u_N}<1-\alpha$, 
and using an easy consequence
of the mean value theorem on the function $z\to\log\Gamma(z)$, 
\begin{align*}
&\E\[\exp\lbr -\eta\gamma\frac{1+\eps}{\eps} \frac{\log(Y_i)i^{-{\gamma}/{\alpha}}}{u_N}
          \rbr\]=\E\[Y_i^{-\eta\gamma \frac{1+\eps}{\eps} 
         \frac{i^{-{\gamma}/{\alpha}}}{u_N}}\]\\
&= \exp\lbr \log\Gamma\(1-\alpha-\eta\gamma \frac{1+\eps}{\eps}\frac{i^{-{\gamma}/{\alpha}}}{u_N}\)
               -\log\Gamma\(1-\alpha\) \rbr \\
&\times \exp\lbr 
           \log\Gamma(1+\theta+(i-1)\alpha)-
           \log\Gamma\(1+\theta+(i-1)\alpha - \eta\gamma \frac{1+\eps}{\eps}\frac{i^{-{\gamma}/{\alpha}}}{u_N}\)\rbr\\
&\leq \exp \lbr \eta\gamma \frac{1+\eps}{\eps}\frac{i^{-{\gamma}/{\alpha}}}{u_N}\psi\(1+\theta+(i-1)\alpha\) 
       \rbr.
\end{align*}
Equation \eqref{eq:muNasmp} and the identity 
$\psi(1+\theta + (i-1)\alpha)=\psi(\theta + (i-1)\alpha) + \frac{1}{\theta + (i-1)\alpha}$ 
yield, as $i\to\infty$,
$$
\psi\(1+\theta+(i-1)\alpha\)-\mu_{i-1}+\frac{1}{\alpha}\log(i)
=\frac{1}{\theta + (i-1)\alpha} -\psi(\theta+1) + \frac{1}{\alpha}\EMc_{\theta/\alpha} + \lo{1}.
$$
Thus we obtain, for all $i\leq N$, that there exists $C'>0$ such that
$$ \E\[\exp\lbr-\frac{\eta\gamma}{u_N}\frac{1+\eps}{\eps} 
 {i^{-{\gamma}/{\alpha}}(\log(Y_i)-\mu_{i-1}+\frac{1}{\alpha}\log(i))}
 \rbr\]^{\frac{\eps}{1+\eps}}
\leq \exp\lbr C'\frac{\eta\gamma}{u_N} {i^{-{\gamma}/{\alpha}}} \rbr.
$$
Taking the product over $i$, and plugging in \eqref{eq:zetaNasmp1}, we conclude, for all $N\in\N$,
$$
\E\[\(\frac{u_N}{\sum_{i=1}^N \tilde V_i^\gamma}\)^{\eta}\]
<\exp\{C'\eta\gamma \},
$$
which entails \eqref{eq:PDnormSuminL1}.\par
We now prove \eqref{gammay}. Let us assume $\gamma > \alpha$ and bound the first line of \eqref{eq:PDSNident} in $\Lp{p}$. 
Using equation \eqref{eq:PDnormAsmp} to approximate $e^{-\gamma \mu_i}$, the fact that
$
\E\[Y_i^{p\gamma}\]=\frac{\Betaf(1+p\gamma-\alpha,\theta+i\alpha)}{\Betaf(1-\alpha,\theta+i\alpha)}
$ together with \eqref{eq:PDEYialphaAsmp}, and also using Lemma \ref{le:PDasympSummands} to upperbound 
$\norm{e^{\gamma S_i}}_p$, we obtain that for some constant 
$C>0$ and any $p\geq1$,
\begin{align*}
\norm{\zeta_{N,\gamma}}_p&\leq \sum_{i=1}^N \norm{Y_i^\gamma}_p e^{-\gamma\mu_{i-1}}\norm{e^{\gamma S_{i-1}}}_p\\
&\leq  C \norm{e^{\gamma S_\infty}}_p \sum_{i=1}^\infty i^{-\gamma} i^{\gamma-{\gamma}/{\alpha}} \(1+\lo{1}\)
<\infty.
\end{align*}
In particular the increasing sequence $\(\zeta_{N,\gamma}^p\)_{N\in\N }$ is a.s. bounded and thus convergent. 
The convergence also holds in $\Lp{p}$ by dominated convergence which yields 
$\norm{\zeta_{N,\gamma}}_{{p}} \to \norm{\zeta_{\infty,\gamma}}_{{p}}$.
\end{proof}

\subsection{Proof of Theorem \ref{th:PDmultGenealogy}}
In this section we gather the results obtained so far in order to prove Theorem \ref{th:PDmultGenealogy}. Before we do so, 
we recall yet another well-known fact on the Poisson-Dirichlet distribution, mainly that
 removing the element $\tilde V_1$ from $(\tilde V_1, \tilde V_2, \dots)$ and renormalizing the resulting sequence leads, in
distribution, to a change of  parameters.
\begin{lemma}[Poisson-Dirichlet change of parameter]\label{le:PDsummary}
Let $(\tilde V_1,\tilde V_2,\dots)$ be a size-biased pick from a PD$(\alpha,\theta)$ partition. Then 
the sequence $\(\frac{\tilde V_2}{1-\tilde V_1}, \frac{\tilde V_3}{1-\tilde V_1}, \dots\)$  is distributed as a size-biased pick from a 
PD($\alpha, \theta + \alpha$) partition.
\end{lemma}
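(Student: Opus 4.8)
The plan is to read everything off the stick-breaking (GEM) representation of size-biased picks recalled just above, i.e. Proposition~2 of \cite{PitmanYor97}, which identifies the law of the whole infinite sequence $(\tilde V_1,\tilde V_2,\dots)$. So I would start by writing $\tilde V_1=Y_1$ and $\tilde V_i=(1-Y_1)\cdots(1-Y_{i-1})Y_i$ for $i\geq 2$, with $(Y_i)_{i\geq1}$ independent and $Y_i\sim\mathrm{Beta}(1-\alpha,\theta+i\alpha)$.

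First I would renormalize the tail: since $1-\tilde V_1=1-Y_1$, the factor $1-Y_1$ cancels in every $\tilde V_{i+1}$ with $i\geq 1$, giving
\begin{equation*}
\frac{\tilde V_{i+1}}{1-\tilde V_1}=(1-Y_2)(1-Y_3)\cdots(1-Y_i)\,Y_{i+1},
\end{equation*}
with the empty product read as $1$ when $i=1$ (so $\tilde V_2/(1-\tilde V_1)=Y_2$). Next I would re-index by setting $Z_k\coloneqq Y_{k+1}$ for $k\geq1$. The $Z_k$ are independent, being a subfamily of the $Y_i$, and $Z_k\sim\mathrm{Beta}\bigl(1-\alpha,\theta+(k+1)\alpha\bigr)=\mathrm{Beta}\bigl(1-\alpha,(\theta+\alpha)+k\alpha\bigr)$. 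In terms of the $Z_k$ the display above becomes $\tilde V_{i+1}/(1-\tilde V_1)=(1-Z_1)\cdots(1-Z_{i-1})Z_i$, which is exactly the stick-breaking construction with the Beta parameters attached to $\mathrm{PD}(\alpha,\theta+\alpha)$. Invoking Proposition~2 of \cite{PitmanYor97} once more, the sequence $\bigl(\tilde V_2/(1-\tilde V_1),\tilde V_3/(1-\tilde V_1),\dots\bigr)$ is therefore distributed as a size-biased pick of a $\mathrm{PD}(\alpha,\theta+\alpha)$ partition, which is the claim.

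There is essentially no obstacle here; the statement is little more than a re-indexing of the GEM construction. The only point that deserves a word of care is that the stick-breaking representation is an equality in distribution of the \emph{entire} infinite sequence, so that the conclusion about the renormalized tail is again a statement about the whole sequence, not merely about finitely many of its coordinates. An alternative and equally short argument could be given through the Chinese restaurant / Blackwell--MacQueen description of $\mathrm{PD}(\alpha,\theta)$ (deleting the first sampled individual together with its block), but the stick-breaking route is the most economical given the machinery already set up in this section.
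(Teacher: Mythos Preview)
Your argument is correct. The paper does not supply a proof of this lemma at all---it simply records it as a well-known fact---so there is nothing to compare against; but your derivation via the stick-breaking representation (already recalled in the paper from Proposition~2 of \cite{PitmanYor97}) is exactly the natural route and is sound as written.
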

The martingale properties obtained in the previous section entail that $c_N$ can be compared to $\E\[\(\rpvN_1\)^2\]$.
\begin{lemma}\label{lastlem}
Assume $\alpha/2<\ppwr\leq \alpha$. Then there exists a constant $B\equiv B_{\alpha,\theta,\ppwr}$ such that 
\begin{align*}
c_N\ge Bu_N^{-2}.
\end{align*}
Moreover, when  $\theta\leq 0$
$$c_N\le\E\[\(\rpvN_1\)^2\] +\lo{u_N^{-\frac{\alpha+\theta}{\alpha}}},$$
whereas 
$$c_N\le\E\[\(\rpvN_1\)^2\] + \bO{u_N^{-2}}$$
whenever $\theta>0$. 
\end{lemma}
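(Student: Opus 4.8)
The plan is to prove the lower bound and the two upper bounds separately; throughout I work with the normalizing sum $\zeta_{N,\ppwr}=\sum_{i=1}^N \tilde V_i^\ppwr$, so that $\rpvN_i=\tilde V_i^\ppwr/\zeta_{N,\ppwr}$ and $c_N=\E\[\zeta_{N,2\ppwr}\,\zeta_{N,\ppwr}^{-2}\]$. For the lower bound, keep only the first family: $c_N\ge \E\[\(\rpvN_1\)^2\]=u_N^{-2}\,\E\[\tilde V_1^{2\ppwr}\(u_N/\zeta_{N,\ppwr}\)^2\]$. Proposition \ref{prop:PDnormsumAsmp} gives $\zeta_{N,\ppwr}/u_N\to L$ almost surely for a strictly positive and a.s.\ finite random variable $L$, so Fatou's lemma yields $\liminf_N u_N^2 c_N\ge \E\[\tilde V_1^{2\ppwr} L^{-2}\]>0$; since $u_N^2 c_N>0$ for every $N$, this produces a constant $B\equiv B_{\alpha,\theta,\ppwr}>0$ with $c_N\ge B u_N^{-2}$ for all $N$.

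For the upper bounds the key point is that the families other than $\tilde V_1$ contribute only at lower order. Using the stick-breaking construction of $(\tilde V_1,\tilde V_2,\dots)$ together with Lemma \ref{le:PDsummary}, write $\tilde V_i=(1-Y_1)\tilde V_i'$ for $i\ge 2$, where $(\tilde V_2',\tilde V_3',\dots)$ is a size-biased pick of a PD$(\alpha,\theta+\alpha)$ partition independent of $Y_1$. Bounding $\zeta_{N,\ppwr}\ge (1-Y_1)^\ppwr\sum_{i\ge 2}(\tilde V_i')^\ppwr$, the factors $(1-Y_1)^{2\ppwr}$ cancel between numerator and denominator and one obtains
$$
c_N-\E\[\(\rpvN_1\)^2\]\ =\ \E\[\frac{\sum_{i=2}^N \tilde V_i^{2\ppwr}}{\zeta_{N,\ppwr}^2}\]\ \le\ c^{(\theta+\alpha)}_{N-1},
$$
where $c^{(\theta+\alpha)}_M$ denotes the same quantity built from a PD$(\alpha,\theta+\alpha)$ partition with $M$ terms. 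Since $2\ppwr>\alpha$, the numerator $\zeta^{(\theta+\alpha)}_{M,2\ppwr}$ of $c^{(\theta+\alpha)}_M$ converges and is bounded in every $\Lp{p}$ (Proposition \ref{prop:PDnormsumAsmp}), while the denominator satisfies $\zeta^{(\theta+\alpha)}_{M,\ppwr}\asymp u_M\asymp u_N$ and has the negative-moment bounds \eqref{eq:PDnormSuminL1} for the parameter $\theta+\alpha$; a H\"older estimate then gives $c^{(\theta+\alpha)}_{N-1}=\bO{u_N^{-2}}$ whenever an exponent strictly larger than $2$ is admissible there, i.e.\ whenever $\theta>2(\ppwr-\alpha)$. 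This settles the case $\theta>0$, and also the case $2(\ppwr-\alpha)<\theta\le 0$, since then $(\alpha+\theta)/\alpha=1+\theta/\alpha<2$ makes $\bO{u_N^{-2}}=\lo{u_N^{-(\alpha+\theta)/\alpha}}$.

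In the remaining range $-\alpha<\theta\le 2(\ppwr-\alpha)$, which is non-void as $\ppwr>\alpha/2$, I would iterate the reduction one more step, passing to a PD$(\alpha,\theta+2\alpha)$ partition: since $\theta+2\alpha>2\ppwr-\alpha$ always holds, the twice-reduced term is again $\bO{u_N^{-2}}=\lo{u_N^{-(\alpha+\theta)/\alpha}}$, so it only remains to bound the single leftover term $\E\[(\tilde V_1')^{2\ppwr}\(\zeta^{(\theta+\alpha)}_{N-1,\ppwr}\)^{-2}\]$. Writing $\zeta^{(\theta+\alpha)}_{N-1,\ppwr}=(\tilde V_1')^\ppwr+(1-\tilde V_1')^\ppwr\,\zeta^{(\theta+2\alpha)}_{N-2,\ppwr}$ one splits on whether the head $(\tilde V_1')^\ppwr$ dominates the rest: on the ``head dominates'' event the ratio is at most $1$ while that event has probability $\bO{u_N^{-(\theta+2\alpha)/\ppwr}}$ by the negative moments of the PD$(\alpha,\theta+2\alpha)$ sum, and the elementary inequality $(\theta+2\alpha)/\ppwr>(\alpha+\theta)/\alpha$ turns this into $\lo{u_N^{-(\alpha+\theta)/\alpha}}$; on the complementary event one bounds the ratio by $(\tilde V_1')^\ppwr(1-\tilde V_1')^{-\ppwr}\(\zeta^{(\theta+2\alpha)}_{N-2,\ppwr}\)^{-1}$ and uses the finiteness of $\E\[(\tilde V_1')^\ppwr(1-\tilde V_1')^{-\ppwr}\]$ (valid throughout this range) together with the first negative moment of $\zeta^{(\theta+2\alpha)}_{N-2,\ppwr}$.

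I expect this last regime of small (in particular negative) $\theta$ to be the main obstacle: there the crude bound $\zeta^{(\theta+\alpha)}_{N-1,\ppwr}\ge(\text{head})^\ppwr$ loses too much near the event that the head carries almost all the mass, so one must carefully separate the ``head dominates'' and ``head does not dominate'' regimes and check that the two resulting exponents both land strictly below $u_N^{-(\alpha+\theta)/\alpha}$; the borderline configuration $\theta=0$, $\ppwr=\alpha$ still requires a small extra refinement.
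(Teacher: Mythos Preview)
Your lower bound via Fatou is correct and arguably simpler than the paper's route, which instead bounds $c_N=\E[\zeta_{N,2\ppwr}/\zeta_{N,\ppwr}^2]\ge\E[\zeta_{N,2\alpha}/\zeta_{N,\ppwr}^2]$ and then applies a reverse H\"older inequality together with Proposition~\ref{prop:PDnormsumAsmp}. Both reach $c_N\ge B\,u_N^{-2}$.

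For the upper bound, your initial reduction to the PD$(\alpha,\theta+\alpha)$ partition is exactly the paper's move. The divergence is that you keep the exponent equal to $2$, i.e.\ you try to control $c_{N-1}^{(\theta+\alpha)}=\E_{\alpha,\theta+\alpha}\!\bigl[\zeta_{N-1,2\ppwr}\,\zeta_{N-1,\ppwr}^{-2}\bigr]$ directly; this requires a negative moment of $\zeta_{N-1,\ppwr}$ of order strictly above $2$, which is unavailable when $\theta\le 2(\ppwr-\alpha)$, and forces you into the iterated reduction that, as you note, still leaves $\theta=0$, $\ppwr=\alpha$ open (your complementary-event bound there is only $O(u_N^{-1})$, not $o(u_N^{-1})$).

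The paper sidesteps the whole iteration with a one-line interpolation: since each $\rpvN_i\in[0,1]$,
\[
\sum_{i\ge 2}\bigl(\rpvN_i\bigr)^2\ \le\ \sum_{i\ge 2}\bigl(\rpvN_i\bigr)^\delta\qquad\text{for any }\delta\le 2.
\]
After the same change of parameter this is $\le\E_{\alpha,\theta+\alpha}\!\bigl[\zeta_{N-1,\delta\ppwr}/\zeta_{N-1,\ppwr}^{\delta}\bigr]$, and now H\"older only needs $\delta\ppwr>\alpha$ (so $\zeta_{N-1,\delta\ppwr}$ is bounded in every $\Lp{q}$) and $p\delta\ppwr<\theta+2\alpha$ for some $p>1$. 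Thus $\E\bigl[\sum_{i\ge 2}(\rpvN_i)^2\bigr]=O(u_N^{-\delta})$ for any $\delta\in(\alpha/\ppwr,(2\alpha+\theta)/\ppwr)\cap(0,2]$, which is non-empty because $\ppwr>\alpha/2$. For $\theta>0$ one takes $\delta=2$; for $\theta\le 0$ any admissible $\delta>\alpha/\ppwr\ge 1\ge(\alpha+\theta)/\alpha$ already yields $o\bigl(u_N^{-(\alpha+\theta)/\alpha}\bigr)$, disposing of your borderline case in a single step. The missing idea in your argument is precisely this freedom to lower the exponent from $2$ to $\delta<2$ \emph{before} applying H\"older.
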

\begin{proof}
On the one hand, choose $p>3$. By the reverse Hölder's inequality we have
\begin{align*}
c_N &= \E_{\alpha,\theta}\[\frac{\zeta_{N,2\gamma}}{\zeta_{N,\gamma}^2}\] \geq  
\E_{\alpha,\theta}\[\frac{\zeta_{N,2\alpha}}{\zeta_{N,\gamma}^2}\]
\geq\E_{\alpha,\theta} \[{\zeta_{N,2\alpha}^\frac1p}\]^p\E_{\alpha,\theta}\[\zeta_{N,\gamma}^{\frac{2}{p-1}}\]^{1-p},
\end{align*}
where $\E_{\alpha,\theta}$ refers to expectation with respect to the PD$(\alpha,\theta)$ distribution. 
Thanks to Proposition \ref{prop:PDnormsumAsmp} the first term in the r.h.s. above is of order $\bO{1}$
as $N\to\infty$; whereas the second term is of order $\bO{u_N^{-\frac{2}{p-1}(1-p)}}$
(using $2\alpha>\alpha$ for the first, and $\frac{2\gamma}{(p-1)}>0>-(\alpha+\theta)$ for the second).
We thus conclude that
\begin{align*}
c_N
&\geq 
      Bu_N^{-2}
\end{align*}
for some $B>0$. 

On the other hand, by  Lemma \ref{le:PDsummary} in the second equality below, we have for any $\delta>0$,
\begin{align}\label{eq:etas2deltabound}
&\E_{\alpha,\theta}\[\sum_{i=2}^N \(\rpvN_i\)^\delta\]\nonumber
\\&= \E_{\alpha,\theta}\[
\frac{\sum_{i=2}^N\(\frac{\tilde V_{i}}{ 1-\tilde V_1}\)^{\delta\ppwr}(1-\tilde V_1)^{\delta\ppwr}}
     { \( \tilde V_1^\ppwr + (1-\tilde V_1)^\ppwr \sum_{j=2}^N \(\frac{\tilde V_j}{1-\tilde V_1}\)^\ppwr\)^\delta}
\] \nonumber\\
&=\E_{\alpha,\theta+\alpha}\[\int_0^1 \frac{(1-y)^{\delta\gamma}\zeta_{N-1,2\gamma}}{\(y^\gamma+(1-y)^\gamma\zeta_{N-1,\gamma}\)^\delta}
\frac{y^{-\alpha}(1-y)^{\theta+\alpha-1}}{\Betaf(1-\alpha,\theta+\alpha)} dy\]\nonumber\\
&=\E_{\alpha,\theta+\alpha}\[
\frac{\zeta_{N-1,\delta\gamma}}{\zeta_{N-1,\gamma}^\delta} 
\int_0^1 \frac{(1-y)^{\delta\gamma}}{\(y^\gamma/\zeta_{N-1,\gamma}+(1-y)^\gamma\)^\delta}
\frac{y^{-\alpha}(1-y)^{\theta+\alpha-1}}{\Betaf(1-\alpha,\theta+\alpha)} dy \]\nonumber\\
&\leq \E_{\alpha,\theta+\alpha}\[\frac{\zeta_{N-1,\delta\gamma}}{\zeta_{N-1,\gamma}^\delta}\].
\end{align}
If we set $\frac{\alpha}{\gamma} <\delta<\frac{2\alpha+\theta}{\gamma}$  and $p=1+\eps,q=\frac{p}{p-1}$ with
$\eps>0$ small enough to ensure $p\delta\gamma<2\alpha+\theta$, we can use Proposition \ref{prop:PDnormsumAsmp}.
By \eqref{eq:asmpZetaEta}, with $\eta=p\delta$, we get
$\E_{\alpha,\theta+\alpha}\[\zeta_{N-1,\gamma}^{-p\delta}\]^{\frac1p}=\bO{\(u_N\)^{-\delta}}$ . 
By \eqref{gammay}, and observing that $\delta\gamma>\alpha$, we get
$\E_{\alpha,\theta+\alpha}\[\zeta_{N-1,\delta\gamma}^q\]^{\frac1q}=\bO{1}$. 
Thus, since $\gamma>\alpha/2$  we may choose $\delta\in[0,2]\cap\(\frac{\alpha}{\gamma},\frac{2\alpha+\theta}{\gamma}\)\neq \emptyset$
 and use Hölder's inequality, with $p$ and $q$ as before,
in the last line above to obtain
$$\E_{\alpha,\theta}\[\sum_{i=2}^N \(\rpvN_i\)^2\]
\leq \E_{\alpha,\theta}\[\sum_{i=2}^N \(\rpvN_i\)^\delta\]=\bO{u_N^{-\delta}}.$$ 
In particular, since $\gamma\leq\alpha$, for $\theta\leq0$ we may conclude $\E_{\alpha,\theta}\[\sum_{i=2}^N \(\rpvN_i\)^2\]=\lo{u_N^{-\frac{\alpha+\theta}{\alpha}}}$, whereas for $\theta>0$ we obtain
$\E_{\alpha,\theta}\[\sum_{i=2}^N \(\rpvN_i\)^2\]=\bO{u_N^{-2}}$.
\end{proof}
We now prove the stated convergence of the genealogy.
\begin{proof}[{Proof of Theorem \ref{th:PDmultGenealogy}}]
We first derive asymptotics for $\E_{\alpha,\theta}\[\(\rpvN_1\)^\delta\]$ where $\delta>0$.
By  Lemma \ref{le:PDsummary}
we have
\begin{align*}
&\E_{\alpha,\theta}\[\(\rpvN_1\)^\delta\]\\
&=\E_{\alpha,\theta+\alpha}
  \[ \int_0^1 \frac{y^{\gamma \delta}}{\(y^\gamma+(1-y)^\gamma \zeta_{N-1,\gamma}\)^\delta} y^{-\alpha}(1-y)^{\theta+\alpha-1} 
   \frac{dy}{\Betaf(1-\alpha,\theta+\alpha)}\],
\end{align*}
where, making the change of variable $u=(1-y)^\gamma\zeta_{N-1,\gamma}$,  we obtain
\begin{align}\label{eq:PDcoaCondii}
\hspace{-50pt}
&\E_{\alpha,\theta}\[\(\rpvN_1\)^\delta\]
=\frac{1}{\gamma\Betaf(1-\alpha,\theta+\alpha)}\times\nonumber\\
&\quad  \E_{\alpha,\theta+\alpha}\[
        \frac{ 1 }
           {\zeta_{N-1,\gamma}^{1+\frac\theta\alpha}} 
        \int_0^{\zeta_{N-1,\gamma}} \frac{\(1-\frac{u^{\frac1\gamma}}{\zeta_{N-1,\gamma}^{\frac1\gamma}}\)^{\gamma \delta}}
           {\(\(1-\frac{u^{\frac1\gamma}}{\zeta_{N-1,\gamma}^{\frac1\gamma}}\)^{\gamma}+ u\)^\delta}
           \(1-\frac{u^{\frac1\gamma}}{\zeta_{N-1,\gamma}^{\frac1\gamma}}\)^{-\alpha} u^{\frac\theta\alpha}   du \].
\end{align}
Denote by $I_N\equiv I_{N,\delta}$ the integral inside the expectation above. Choose $0<\eps<1$. Then $I_N$ can be bounded on the set $\lbr\frac{u^{1/\gamma}}{\zeta_{N-1,\gamma}^{1/\gamma}}\leq\eps\rbr$ by
\begin{align*}
 \int_0^\infty \frac{u^{\frac\theta\alpha}}{((1-\eps)^\gamma + u)^\delta}du <\infty 
\end{align*}
 whenever $\delta>1+\frac{\theta}{\alpha}$. 
On the other hand, on the set $\lbr\frac{u^{1/\gamma}}{\zeta_{N-1,\gamma}^{1/\gamma}}>\eps\rbr$ the integral $I_N$ 
can be bounded by
$$
 \int_{\eps^\gamma\zeta_{N-1,\gamma}}^{\zeta_{N-1,\gamma}}
 (1-\eps)^{2\gamma  - \alpha}u^{\frac\theta\alpha-\delta}du  \leq  C \zeta_{N-1, \gamma}^{1+\frac{\theta}{\alpha}-\delta}
$$
for some $C>0$. 
These two bounds, together with 
the fact that $\zeta_{N-1,\gamma}\to\infty$ as $N\to\infty$  (from Proposition \ref{prop:PDnormsumAsmp}), 
allow us to use (pathwise) dominated convergence to obtain, for $\delta>1+\frac{\theta}{\alpha}$ and as $N\to\infty$,
$$
I_N
{\to} \int_0^\infty \frac{u^{\frac\theta\alpha}}{(1+u)^\delta}du
$$
almost surely. On the other hand, substituting these bounds
 into the expectation in \eqref{eq:PDcoaCondii} we obtain  
$\E_{\alpha,\theta}\[\(\rpvN_1\)^\delta\]=
\bO{ \E_{\alpha,\alpha+\theta}\[\zeta_{N-1,\gamma}^{1+\frac{\theta}{\alpha}}\]} + \bO{\E_{\alpha,\alpha+\theta}\[\zeta_{N-1,\gamma}^{-\delta}\]}$ for $\delta>1+\frac{\theta}{\alpha}$.
By \eqref{eq:asmpZetaEta} in Proposition \ref{prop:PDnormsumAsmp} together with the fact that 
$\frac{\alpha+\theta}{\alpha}\gamma<2\alpha+\theta$, the first term is of 
order $\bO{u_N^{-1-\frac{\theta}{\alpha}}}$. If furthermore 
$\delta<\frac{2\alpha+\theta}{\gamma}$ then, again by \eqref{eq:asmpZetaEta}, the
second term is of order $\bO{ u_N^{-\delta}}$. Since the random variable 
$\(\rpvN_1\)^\delta$ is decreasing
in $\delta$, the condition $\delta<\frac{2\alpha+\theta}{\gamma}$ was no restriction, and we may conclude 
$\E_{\alpha,\theta}\[\(\rpvN_1\)^\delta\]
=\bO{u_N^{-1-\frac{\theta}{\alpha}}}$
for all $\delta>1+\frac{\theta}{\alpha}$.
Thus we may apply dominated convergence in \eqref{eq:PDcoaCondii} which, together with Proposition \ref{prop:PDnormsumAsmp},
yield for $\delta>1+\frac{\theta}{\alpha}$,
\begin{align}\label{eq:PDeta1Asmp}
&\lim_{N\to\infty} u_N^{1+\frac{\theta}{\alpha}}\E_{\alpha,\theta}\[\(\rpvN_1\)^\delta\]\\
&=
  \E_{\alpha,\alpha+\theta}\[ \(\frac{\Gamma(1+\gamma-\alpha)}{\Gamma(1-\alpha)}K_{\alpha,\theta}^{-\gamma}e^{\gamma S_\infty}\)^{-1-\frac{\theta}{\alpha}} \]
  \frac{\gamma^{-1}}{\Betaf(1-\alpha,\alpha+\theta)}
       \int_0^\infty \frac{u^{\frac\theta\alpha}}{(1+u)^\delta}du.\nonumber
\end{align}
We now turn to the case $i)$.
Given Lemma \ref{lastlem}, and the condition $\theta\in (-\alpha,\alpha)$, it
 remains to prove the condition \eqref{critsimple} in Proposition \ref{prop:multSPlambda}. This is, in our context, 
\begin{equation}\label{eq:PDbetaii}
\lim_{N\to\infty}  u_N^{1+\frac{\theta}{\alpha}}\E_{\alpha,\theta}\[\(\rpvN_1\)^b\]= \ell_{\alpha,\theta,\gamma}\int_0^1 p^{b-2}\Lambda(dp) 
\end{equation}
 where $\Lambda(dp)$ is the
coagulation measure of the Beta$(1+\frac{\theta}{\alpha}, 1-\frac{\theta}{\alpha})$-coalescent.
When $\theta\in\(-\alpha,\alpha\)$ we have $1+\frac{\theta}{\alpha}<2$. 
Thus \eqref{eq:PDeta1Asmp} holds for all $\delta=b\geq2.$
By means of the change of variable $x=(1+u)^{-1}$ we may rewrite the integral in \eqref{eq:PDeta1Asmp} with $\delta=b$ as
$$
       \int_0^\infty \frac{u^{\frac\theta\alpha}}{(1+u)^b}du
 = \int_0^1 x^{b-2}x^{1-\frac\theta\alpha-1}(1-x)^{1+\frac\theta\alpha-1} dx.
$$
On the other hand, by Lemma \ref{le:PDasympSummands}, 
\begin{align*}
&\E_{\alpha,\alpha+\theta}\[\(\frac{\Gamma(1+\gamma-\alpha)}{\Gamma(1-\alpha)}K_{\alpha,\theta}^{-\gamma}e^{\gamma S_\infty}\)^{-1-\frac{\theta}{\alpha}} \]
  \frac{\gamma^{-1}}{\Betaf(1-\alpha,\alpha+\theta)}\\
&=\gamma^{-1}\(\frac{\Gamma(1-\alpha)}{\Gamma(1+\gamma-\alpha)}\)^{1+\frac{\theta}{\alpha}} 
    \frac{\Gamma(\alpha+\theta+1)}{\Gamma\(\alpha+\theta-\frac{\gamma}{\alpha}(\alpha+\theta) +1\)} 
       \notag\\
&\times   \frac{\Gamma\(\frac{1}{\alpha}(\alpha+\theta-\frac{\gamma}{\alpha}(\alpha+\theta))+1\)}{\Gamma\(\frac{\alpha+\theta}{\alpha}+1\)}
\frac{\Gamma(1+\theta)}{\Gamma(1-\alpha)\Gamma(\alpha+\theta)}\\
&= \frac{\alpha}{\gamma}\frac{\Gamma(1-\alpha)^{\frac{\theta}{\alpha}}}{\Gamma(1+\gamma-\alpha)^{1+\frac{\theta}{\alpha}}}    \frac{\Gamma\(\frac{\alpha+\theta}{\alpha}(1-\frac{\gamma}{\alpha})+1\)}
         {\Gamma\(\(\alpha+\theta\)\(1-\frac{\gamma}{\alpha}\) +1\)}
   \frac{\Gamma(1+\theta) \Gamma\(1-\frac{\theta}{\alpha}\)}
        {\Betaf\(1-\frac{\theta}{\alpha},1+\frac{\theta}{\alpha}\)}.
\end{align*}
Substituting both in \eqref{eq:PDeta1Asmp} we obtain \eqref{eq:PDbetaii}.\par

Let us now turn to case $ii)$;      
we will prove the conditions of Proposition \ref{prop:multiKingman2}. For the first condition, recall that, by
 Lemma \ref{lastlem}, $c_N\geq Cu_N^{-2}$ for some $C>0$. Then, since
 $\frac{\alpha}{\gamma}<2$ and $\frac{2\alpha+\theta}{\gamma}\geq 3$;  
the argument after equation \eqref{eq:etas2deltabound} yields for $2<\delta<3$ (and any $\theta\geq \alpha$),
$$
\E_{\alpha,\theta}\[\sum_{i=2}^N \(\rpvN_i\)^3\]=\bO{u_N^{-\delta}}
=\lo{c_N}.
$$\par                                    
It remains to prove the second condition of  Proposition \ref{prop:multiKingman2}. When $\theta>\alpha$ we also have, choosing $\delta>1+\frac{\theta}{\alpha}\geq2$
 in \eqref{eq:PDeta1Asmp}, that
$\E_{\alpha,\theta}[\(\rpvN_1\)^\delta]=\bO{u_N^{-1-\frac{\theta}{\alpha}}}=\lo{c_N}$. 
The convergence of the genealogy then follows in this case.\par
The special case when $\theta=\alpha$ needs to be treated separately. Observe that for $\delta= 1+\frac{\theta}{\alpha}=2$ 
we have the following bounds on $I_N$  (for some $C_1,C_2\in\mathbb{R}$)
$$
C_1+\int_2^{\zeta_{N-1,\gamma}}\frac{\(1-\frac{u}{\zeta_{N-1,\gamma}}\)^{2}}{(\frac{1}{u}+1)^2}u^{-1}du
\leq I_N \leq C_2+\int_2^{\zeta_{N-1,\gamma}}u^{-1}du.
$$
These, together with the fact that (by an application of the Mean Value Theorem) 
$\abs{1-\(1-\frac{u}{\zeta_{N-1,\gamma}}\)^{2}}
\leq u/\zeta_{N-1,\gamma}$, and that 
$\lim_{x\to\infty}\frac{1}{\log(x)}\int_2^x\frac{1}{(1/u+1)^2}u^{-1}du=\lim_{x\to\infty}\frac{1}{\log(x)}\int_2^xu^{-1}du$,
yield using dominated convergence,
$$
\lim_{N\to\infty}\frac{I_N}{\log(\zeta_{N-1,\gamma}\vee2)}{=}
\lim_{N\to\infty} \frac{1}{\log(\zeta_{N-1,\gamma}\vee 2)}\int_2^{\zeta_{N-1,\gamma}}u^{-1}du
=1
$$
almost surely. 
Since $I_N\leq C_2+\log(\zeta_{N-1,\gamma}\vee 2)$, the above convergence also holds in $\Lp{p}$,$p>0$.
On the other hand, by  Proposition \ref{prop:PDnormsumAsmp}, the limit
$
\lim_{N\to\infty}\log\((\zeta_{N-1,\gamma}\vee2) / \break u_N^{1+\frac{\theta}{\alpha}}\)
$
exists almost surely and in $\Lp{p},p>0$. 
Thus
\begin{align*}
\lim_{N\to\infty}\frac{\log\(\zeta_{N-1,\gamma}\vee2\)}{(1+\frac{\theta}{\alpha})\log\(u_N\)}
=1.
\end{align*}
This implies, multiplying and dividing the l.h.s. below by $\log(\zeta_{N-1,\gamma}\vee 2)$,
\begin{align*}\lim_{N\to\infty}\frac{I_N}{(1+\frac{\theta}{\alpha})\log\(u_N\)}
&=\lim_{N\to\infty}\log\((\zeta_{N-1,\gamma}\vee2)/u_N^{1+\frac{\theta}{\alpha}}\)\\
&=\log\(\frac{\Gamma(1+\gamma-\alpha)}{\Gamma(1-\alpha)}K^{-\gamma}_{\alpha,\theta}e^{\gamma S_\infty}\)
\end{align*}
almost surely and in $\Lp{p}$, $p>0$. Plugging into \eqref{eq:PDcoaCondii} we then obtain, choosing $p$ close enough to 1
so that $p(1+\frac{\theta}{\alpha})=2p<\frac{2\alpha+\theta}{\gamma}$ and $q=\frac{p}{p-1}$,
\begin{align*}
\E_{\alpha,\theta}\[\(\rpvN_1\)^2\]
&\leq \E_{\alpha,\alpha+\theta}\[\zeta_{N-1,\gamma}^{-2p}\]^{1/p} \E_{\alpha,\alpha+\theta}\[I_N^q\]^{1/q}\\
&=\bO{u_N^{-2}\log\(u_N\)}.
\end{align*}
By dominated convergence in  \eqref{eq:PDcoaCondii}, the latter implies
$$
\lim_{N\to\infty}\frac{u_N^{2}}{\log\(u_N\)}
\frac{\E_{\alpha,\alpha+\theta}\[\(\rpvN_1\)^2\]}{1+\frac{\theta}{\alpha}}
=\E\[\frac{\log\(\frac{\Gamma(1+\gamma-\alpha)}{\Gamma(1-\alpha)}K_{\alpha,\theta}^{-\gamma}e^{\gamma S_\infty}\)}{\(\frac{\Gamma(1+\gamma-\alpha)}{\Gamma(1-\alpha)}K_{\alpha,\theta}^{-\gamma}e^{\gamma S_\infty}\)^{2}}\].
$$
By Lemma \ref{lastlem} we thus have $u_N^{-2}=\lo{\E_{\alpha,\theta}\[\(\rpvN_1\)^2\]}=\lo{c_N}$. Then, choosing $\delta>1+\frac{\theta}{\alpha}$ in
\eqref{eq:PDeta1Asmp} we obtain $\E\[\(\rpvN_1\)^\delta\]=\lo{c_N}$. The proof is completed by a second application
of Proposition \ref{prop:multiKingman2}. 
\end{proof}

\section*{Statements and Declarations}
The authors have no relevant financial or non-financial interests to disclose. 
\section*{Data Availability}
Data sharing not applicable to this article as no datasets were generated or analysed during the current study.

\section*{Acknowledgments}

This project was partially supported by Universidad Nacional Autónoma de México - Programa de Apoyo de Proyectos de Investigación e Inovación Tecnológica grant IN104722.\par
This work was partially supported by the ANR LabEx CIMI (grant ANR-11-LABX-0040) within the French State Programme “Investissements d’Avenir.”\par
The authors also thank the Hausdorff Institute for Mathematics, where they could spend some very fruitful research time during the Junior Trimester Program ``Stochastic Modelling in the Life Science: From Evolution to Medicine" funded by the Deutsche Forschungsgemeinschaft (DFG, German Research Foundation) under Germany's Excellence Strategy – EXC-2047/1 – 390685813.

\printbibliography

\end{document}